\numberwithin{equation}{section}
\newtheorem{theorem}{Theorem}[section] 
\newtheorem{lemma}[theorem]{Lemma}     
\newtheorem{corollary}[theorem]{Corollary}
\newtheorem{proposition}[theorem]{Proposition}
\theoremstyle{definition}
\newtheorem{conjecture}[theorem]{Conjecture}  
\newtheorem{definition}[theorem]{Definition}
\newtheorem{example}[theorem]{Example}
\newtheorem{question}[theorem]{Question}
\newcommand {\C}{\mathbb C}
\newcommand {\T}{\mathbb T}
\newcommand {\Q}{\mathbb Q}
\newcommand {\R}{\mathbb R}
\newcommand {\N}{\mathbb N}
\newcommand {\Z}{\mathbb Z}
\newcommand {\B}{\mathcal B}
\newcommand{\supp}{{\rm supp\,}}
\newcommand{\1}{\mathbbm{1}}
\newcommand{\im}{\operatorname{im}}
\newcommand{\id}{\operatorname{id}}
\newcommand{\dd}{{\rm \, d}}
\newcommand{\ind}{\operatorname{ind}}
\newcommand{\pkr}{\operatorname{pker}}
\newcommand{\e}{{\rm e}}
\title[The Ideal Structure of Measure Algebras]{The ideal structure of measure algebras and asymptotic properties of group representations}
\keywords{locally compact group, measure algebra, irreducible representation, vanishing at infinity, dual Banach algebra.}
\date{2021}
\subjclass[2020]{43A05, 43A65 (primary); 43A20,  46H10 (secondary)}
\begin{document}

\author[J.\ T.\  White]{Jared T.\ White}
\address{
	Jared T. White, 31 Warham Road, London N4 1AR, United Kingdom}
\email{jared.white@open.ac.uk}
	
	\date{2021}

\maketitle	
	
	\begin{center}
	\textit{The Open University}
\end{center}

\begin{abstract}
	We classify the weak*-closed maximal left ideals of the measure algebra $M(G)$ for certain Hermitian locally compact groups $G$ in terms of the irreducible representations of $G$ and their asymptotic properties. In particular, we obtain a classification for connected nilpotent Lie groups, and the Euclidean rigid motion groups. We also prove a version of this result for certain weighted measure algebras. We apply our classification to obtain an analogue of Barnes' Theorem on integrable representations for representations vanishing at infinity. We next study the relationship between weak*-closedness and finite generation, proving that in many cases $M(G)$ has no finitely-generated maximal left ideals. We also show that $M(\R^2 \rtimes SO(2))$ has a weak*-closed maximal left ideal that is not generated by a projection, and investigate whether or not it has any weak*-closed left ideals which are not finitely-generated.
\end{abstract}

\section{Introduction}
\noindent
In this article we shall study the weak*-closed ideal structure of the measure algebra of a locally compact group, and use the understanding gained to prove new results about representations vanishing at infinity and about finitely-generated left ideals.

In \cite{W3} the author proved that for a compact group $G$ there is a bijective correspondence between the closed left ideals of $L^1(G)$ and the weak*-closed left ideals of $M(G)$, and used this to give a classification of the weak*-closed left ideals of $M(G)$ in terms of the representation theory of $G$. In the present article we seek similar results that go beyond the compact setting. For a non-compact locally compact group $G$ there is no description available of the closed left ideals of $L^1(G)$ (and finding any description seems to be an intractable problem), and as such we shall focus on maximal left ideals.
For a Hermitian locally compact group $G$ it is known that the maximal modular left ideals of $L^1(G)$ all have the form $\mathcal{I}_{\pi, \xi} := \{ f \in L^1(G) : \pi(f)\xi = 0 \}$, for some irreducible representation $\pi$ and some unit vector $\xi \in \mathcal{H}_\pi$ (although it is unclear exactly which left ideals of this form are maximal modular).  We shall study the following conjecture. We define $\mathcal{J}_{\pi, \xi}$ to be the closed left ideal of the measure algebra given by 	$\{ \mu \in M(G) : \pi(\mu) \xi = 0 \}$.

\begin{conjecture}		\label{0.1}
	Let $G$ be a Hermitian locally compact group. The weak*-closed maximal left ideals of $M(G)$ are given exactly by $\mathcal{J}_{\pi, \xi}$,
	where $\pi$ is an irreducible representation vanishing at infinity, and $\xi$ is a unit vector in $\mathcal{H}_\pi$ with the property that $\mathcal{I}_{\pi,\xi}$ is a maximal modular left ideal of $L^1(G)$.
\end{conjecture}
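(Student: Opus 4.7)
The plan is to prove the two implications separately, pivoting on the fact that the Hermitian hypothesis parameterises the maximal modular left ideals of $L^1(G)$ as $\mathcal{I}_{\pi,\xi}$, combined with a correspondence between norm-closed left ideals of $L^1(G)$ and weak*-closed left ideals of $M(G)$ in the spirit of \cite{W3}. Throughout I would use that $L^1(G)$ is a closed two-sided ideal of $M(G)$ with a bounded approximate identity, so that any norm-closed left ideal of $L^1(G)$ is automatically a left $M(G)$-submodule.

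For the implication that each $\mathcal{J}_{\pi,\xi}$ (with $\pi$ vanishing at infinity and $\mathcal{I}_{\pi,\xi}$ maximal modular) is a weak*-closed maximal left ideal, weak*-closedness is essentially immediate: the matrix coefficients $g\mapsto\langle\pi(g)\xi,\eta\rangle$ lie in $C_0(G)$, so the functionals $\mu\mapsto\langle\pi(\mu)\xi,\eta\rangle$ are weak*-continuous on $M(G)=C_0(G)^*$, and $\mathcal{J}_{\pi,\xi}$ is their common kernel. For maximality one verifies $\mathcal{J}_{\pi,\xi}\cap L^1(G)=\mathcal{I}_{\pi,\xi}$ and argues that any strictly larger left ideal $\mathcal{K}$ of $M(G)$ would intersect $L^1(G)$ in a left ideal of $L^1(G)$ properly containing $\mathcal{I}_{\pi,\xi}$, hence equal to all of $L^1(G)$; an approximate-identity argument then propagates this to $\mathcal{K}=M(G)$.

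For the converse, let $\mathcal{M}$ be a weak*-closed maximal left ideal of $M(G)$. First I would show that $\mathcal{M}\cap L^1(G)$ is a maximal modular closed left ideal of $L^1(G)$, using the maximality of $\mathcal{M}$ to rule out proper extensions inside $L^1(G)$. By the Hermitian hypothesis this ideal equals $\mathcal{I}_{\pi,\xi}$ for some irreducible $\pi$ and unit vector $\xi$. Since $\mathcal{I}_{\pi,\xi}\subseteq\mathcal{J}_{\pi,\xi}$, an application of the ideal correspondence should give $\mathcal{M}\subseteq\mathcal{J}_{\pi,\xi}$, and maximality forces equality. To conclude $\pi$ vanishes at infinity, I would exploit the fact that $M(G)/\mathcal{M}$ is a dual Banach space with predual $\mathcal{M}^\perp\subseteq C_0(G)$: the induced map $[\mu]\mapsto\pi(\mu)\xi$ is injective, bounded, and has dense image by irreducibility, and pulling back the Hilbert-space structure (via a surjectivity and open-mapping argument) places each coefficient $\langle\pi(\cdot)\xi,\eta\rangle$ in the predual, hence in $C_0(G)$.

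The decisive technical obstacle is establishing the correspondence $J\leftrightarrow\overline{J}^{w*}$ between norm-closed left ideals of $L^1(G)$ and weak*-closed left ideals of $M(G)$ beyond the compact setting of \cite{W3}; equivalently, proving that distinct weak*-closed left ideals of $M(G)$ have distinct intersections with $L^1(G)$. I do not see a general method for this, and I also do not see how to force the surjectivity of the quotient map $M(G)/\mathcal{M}\to\mathcal{H}_\pi$ without additional structure on $G$. These are presumably the reasons that the main theorem of the paper restricts to specific classes such as connected nilpotent Lie groups and the Euclidean motion groups, where enough structural information is available (for instance, via Plancherel theory on the nilpotent side, or a semidirect-product decomposition with a compact factor on the motion-group side) to carry out the correspondence by hand.
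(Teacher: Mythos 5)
First, a point of framing: the statement you are proving is stated in the paper as a \emph{conjecture}, and the paper does not prove it in this generality. What it proves is Theorem \ref{4.4a}, which establishes the conjecture under the additional hypothesis that every $\pi\in\widehat{G}$ vanishes at infinity modulo its projective kernel. Your attempt correctly reproduces most of the machinery behind that partial result: weak*-closedness of $\mathcal{J}_{\pi,\xi}$ via weak*-continuity of the coefficient functionals when $\pi$ vanishes at infinity is exactly Lemma \ref{4.3}/Corollary \ref{4.4}; maximality via $\mathcal{J}_{\pi,\xi}\cap L^1(G)=\mathcal{I}_{\pi,\xi}$ and a modular-identity/approximate-identity argument is Lemma \ref{4.3a}(i); and the converse identification of a weak*-closed maximal left ideal with some $\mathcal{J}_{\pi,\xi}$ via Palmer's Theorem is Lemma \ref{4.3b} and Corollary \ref{4.3c}. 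You are also right that the argument cannot be completed in general — it is open.

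That said, you misplace the decisive obstacle. The injectivity of $J\mapsto J\cap L^1(G)$ on weak*-closed left ideals of $M(G)$ is \emph{not} the hard part: since $L^1(G)$ has a bounded approximate identity $(e_\alpha)$ weak*-converging to $\delta_e$ and convolution is separately weak*-continuous, every weak*-closed left ideal $J$ satisfies $J=\overline{J\cap L^1(G)}^{w^*}$ (for $\mu\in J$ one has $e_\alpha*\mu\in J\cap L^1(G)$ and $e_\alpha*\mu\to\mu$ weak*); this is precisely how Corollary \ref{4.3c} runs, and it needs no restriction on $G$. The genuine gap is the final step: showing that weak*-closedness of $\mathcal{J}_{\pi,\xi}$ forces $\pi$ to vanish at infinity. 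Your proposed route — transporting reflexivity from $\mathcal{H}_\pi$ to $M(G)/\mathcal{M}$ so that the coefficient functionals become weak*-continuous and hence land in $C_0(G)$ — founders exactly where you suspect: the map $[\mu]\mapsto\pi(\mu)\xi$ has image the algebraically cyclic subspace $\pi(M(G))\xi$, which is dense in $\mathcal{H}_\pi$ but need not equal it, so no open-mapping argument is available. The paper takes an entirely different route here: Proposition \ref{4.9b} and Corollary \ref{4.9c} show (via an ergodicity argument on the von Neumann algebra generated by $\pi(Z(N))$, together with the fact from Corollary \ref{1.3} that $M(G)$ has no proper weak*-closed left ideals of finite codimension for non-compact $G$) that $\mathcal{J}_{\pi,\xi}$ is weak*-\emph{dense} whenever $\pkr\pi$ is non-compact; weak*-closedness therefore forces $\pkr\pi$ compact, and the extra standing hypothesis ``vanishes at infinity modulo the projective kernel'' then upgrades this to genuine vanishing at infinity. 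Removing that hypothesis is precisely what remains open.
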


Our main result is Theorem \ref{4.4a}, which states that our conjecture is true for a large class of Hermitian locally compact groups, which includes connected nilpotent Lie groups, the Euclidean rigid motion groups $\R^n \rtimes SO(n)$, and the Fell groups $\Q_p \rtimes \mathcal{O}_p$. We shall also prove in Theorem \ref{4.3f} that certain weighted measure algebras enjoy a bijective correspondence between their weak*-closed maximal left ideals, and the maximal modular left ideals of the weighted $L^1$-algebra (without any constraint on the representation), similar to the compact case.

We shall now explain some of the applications of our classification result. In his seminal work of 1980 \cite{B80}, Barnes established a correspondence between integrable irreducible representations of a unimodular locally compact group, and minimal projections in $L^1(G)$. In this context ``minimal'' means that the left ideal generated by the projection is a minimal left ideal, which is a stronger property than being minimal with respect to the partial order on projections. Another important asymptotic property of a representation of a locally compact group is that of vanishing at infinity. In this article we make a conjecture relating weak*-closed maximal left ideals of the measure algebra $M(G)$ of a Hermitian locally compact group $G$ and irreducible representations vanishing at infinity that we believe should be thought of as the correct analogue of Barnes' Theorem in this setting.

\begin{conjecture}		\label{0.2}
	Let $G$ be a Hermitian locally compact group, and let $\pi \in \widehat{G}$. 	
	Then there exists a unit vector $\xi \in \mathcal{H}_\pi$ for which $\mathcal{J}_{\pi, \xi}$ is a weak*-closed maximal left ideal of $M(G)$ if and only if $\pi$ vanishes at infinity.	
\end{conjecture}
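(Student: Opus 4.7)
The plan is to derive this statement as a direct consequence of the classification Conjecture \ref{0.1}, which we are assuming to hold for the groups under consideration (via Theorem \ref{4.4a}). The only-if direction is then immediate: if $\mathcal{J}_{\pi,\xi}$ is weak*-closed and maximal, the classification forces $\pi$ to be a representation vanishing at infinity.

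For the if direction, I would first observe that it suffices to exhibit a unit vector $\xi \in \mathcal{H}_\pi$ with the property that $\mathcal{I}_{\pi,\xi}$ is a maximal modular left ideal of $L^1(G)$; for then Conjecture \ref{0.1} immediately delivers that $\mathcal{J}_{\pi,\xi}$ is weak*-closed and maximal in $M(G)$. To establish existence of such a $\xi$, I would invoke the Hermitian hypothesis in the form that every primitive ideal of $L^1(G)$ is the kernel of some $\pi \in \widehat{G}$, and that this assignment is a bijection onto the primitive ideal space. Combined with the fact, already recorded in the introduction, that every maximal modular left ideal of $L^1(G)$ has the form $\mathcal{I}_{\sigma,\eta}$ for some $\sigma \in \widehat{G}$ and unit vector $\eta$, this should allow one to conclude that for each $\pi$ there is at least one maximal modular left ideal $M$ whose quotient representation is equivalent to $\pi$, and hence some $\xi$ with $M = \mathcal{I}_{\pi,\xi}$.

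The principal obstacle will be this existence step. Topological irreducibility of $\pi$ guarantees that $\pi(L^1(G))\xi$ is dense in $\mathcal{H}_\pi$ for every nonzero $\xi$, but maximal modularity of $\mathcal{I}_{\pi,\xi}$ requires the genuinely stronger algebraic irreducibility, namely $\pi(L^1(G))\xi = \mathcal{H}_\pi$. Bridging this gap is where the argument should be most delicate: the natural route is to pass to the enveloping $C^*$-algebra $C^*(G)$, use Kadison transitivity to see the action of $C^*(G)$ on $\mathcal{H}_\pi$ as algebraically irreducible, and then exploit the Hermitian hypothesis (which yields spectral equivalence between $L^1(G)$ and $C^*(G)$) to transfer the algebraic simplicity back down to the $L^1$-level, producing the required $\xi$. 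Any such $\xi$ must automatically be a unit vector after rescaling, completing the proof.
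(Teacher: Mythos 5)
Your only-if direction matches the paper's: Theorem \ref{4.15} deduces it directly from the classification in Theorem \ref{4.4a}. The genuine gap is in the ``if'' direction, at exactly the step you flag as delicate. You correctly identify that what is needed is a unit vector $\xi$ with $\mathcal{I}_{\pi,\xi}$ maximal modular, and that the obstruction is passing from topological to algebraic irreducibility; but the proposed bridge --- Kadison transitivity in $C^*(G)$ followed by ``spectral equivalence'' between $L^1(G)$ and $C^*(G)$ --- does not close it. Hermitianness gives equality of spectral radii of self-adjoint elements, but this says nothing about surjectivity of the orbit map $f \mapsto \pi(f)\xi$ on the dense subalgebra $L^1(G)$: algebraic irreducibility of $\pi(C^*(G))$ does not descend to $\pi(L^1(G))$, and no mechanism you describe produces the required $\xi$. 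Likewise, the appeal to a bijection between $\widehat{G}$ and the primitive ideal space quietly presupposes a $T_1$-type separation of $\widehat{G}$, i.e.\ a CCR hypothesis, which your write-up never states but which the paper's Theorem \ref{4.15} explicitly requires.

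The paper's actual route (Proposition \ref{4.13}) sidesteps the algebraic-irreducibility question altogether. It forms the quotient Banach $*$-algebra $\mathcal{A}_\pi = \pi(L^1(G))$, which is semisimple and not equal to $\C$, hence possesses \emph{some} maximal modular left ideal $I$; pulling $I$ back gives a maximal modular left ideal $J$ of $L^1(G)$. Palmer's Theorem applied to $\mathcal{A}_\pi$ (Hermitian by Leptin's result) writes $I$ via an irreducible $*$-representation $\sigma$ of $\mathcal{A}_\pi$ and a unit vector, and the whole point of the CCR hypothesis is the identification step: the extension $\widetilde{\sigma}$ of $\sigma$ to $C^*(G)$ factors through $\pi$, so $\ker\widetilde{\sigma} \supset \ker\pi$ and $\widetilde{\sigma} \in \overline{\{\pi\}} = \{\pi\}$ in the $T_1$ space $\widehat{G}$; hence $J = \mathcal{I}_{\pi,\xi}$ for some unit vector $\xi \in \mathcal{H}_\pi$, and Theorem \ref{4.4a} then applies. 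To repair your argument you should replace the transitivity/spectral-equivalence step with this quotient-algebra argument, and add the CCR hypothesis (together with ``every irreducible representation vanishes at infinity modulo its projective kernel'', needed for Theorem \ref{4.4a}) to your standing assumptions, since the conjecture in full generality remains open.
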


 In Proposition \ref{4.11} we shall rephrase Barnes' Theorem in terms of maximal left ideals of the measure algebra, so that the analogy with our conjecture is more transparent. In Theorem \ref{4.15} we shall prove the conjecture for those groups satisfying the hypothesis of Theorem \ref{4.4a} that are additionally CCR (which still includes connected nilpotent Lie groups and the Euclidean rigid motion groups). 
 
 Another application is to the theory of finitely-generated maximal left ideals of measure algebras and group algebras, a topic previously studied by the author in \cite{W1}. It was proved there that, when $G$ is not discrete, $L^1(G)$ has no finitely-generated maximal modular left ideals, but the case of discrete $G$ remains open. The connection with our classification result is that
 finitely-generated norm-closed left ideals of a measure algebra are automatically weak*-closed.
 A consequence of our investigations shall be that, for some locally compact groups, the measure algebra has no finitely-generated maximal left ideals at all. In particular, when $G$ is finitely-generated and virtually nilpotent $L^1(G) = M(G)$ has no finitely-generated maximal (modular) left ideals, just like for non-discrete groups.

\begin{theorem}		\label{0.3}	
		Let $G$ be a locally compact group which is any of the following:
		\begin{enumerate}
			\item[\rm (i)] a non-compact Moore group;
			\item[\rm (ii)] a group with non-compact centre;
			\item[\rm (iii)] an infinite finitely-generated virtually nilpotent group;
			\item[\rm (iv)] a group of the form $\R^2 \rtimes_A \R$, for some $A \in M_2(\R)$;
		\end{enumerate}
		Then $M(G)$ has no weak*-closed, and hence no finitely-generated, maximal left ideals.
\end{theorem}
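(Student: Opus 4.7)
The overall strategy is to use the classification Theorem~\ref{4.4a} (or its proof method) to translate the problem into representation theory, and then show that each of the listed groups admits no irreducible unitary representation vanishing at infinity whose associated ideal $\mathcal{I}_{\pi,\xi}$ is maximal modular. The second conclusion on finitely-generated maximal left ideals then follows immediately from the automatic weak*-closedness of finitely-generated norm-closed left ideals of $M(G)$ already recalled in the introduction, so in every case I only need to rule out weak*-closed maximal left ideals.

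Cases (i) and (ii) are handled by Schur-type reasoning. For a non-compact Moore group every irreducible $\pi$ is finite-dimensional, so $\pi(G)\subset U(n)$ has compact closure; its matrix coefficients $g\mapsto \langle\pi(g)\xi,\eta\rangle$ are therefore almost periodic, and along any sequence $g_n\to\infty$ in $G$ they have accumulation values of modulus $\|\xi\|\|\eta\|$, precluding membership in $C_0(G)$. For a group with non-compact centre $Z$, Schur's lemma gives a unitary central character $\chi$ with $\pi(z)=\chi(z)I$ on $Z$, so $|\langle\pi(zg)\xi,\eta\rangle|=|\langle\pi(g)\xi,\eta\rangle|$ is constant in $z\in Z$; letting $z\to\infty$ in $Z$ rules out vanishing at infinity of any non-zero matrix coefficient.

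Case (iii) reduces to case (ii) via a Malcev subgroup. A finitely-generated virtually nilpotent infinite group $G$ contains a torsion-free, nilpotent, finite-index normal subgroup $N$, and since a non-trivial torsion-free nilpotent group has non-trivial (and hence, being torsion-free abelian of positive rank, infinite) centre, $Z(N)$ is infinite. If $\pi\in\widehat{G}$ vanished at infinity on $G$, then $\pi|_N$ would vanish at infinity on $N$; but $\pi|_N$ decomposes as a finite direct sum of irreducible representations of $N$, each of which would then vanish at infinity along $Z(N)$, contradicting the argument of case (ii) applied to $N$. Discrete virtually nilpotent groups are symmetric (by Hulanicki), so the hypotheses underlying Theorem~\ref{4.4a} hold for $G$.

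For (iv) with $G=\R^2\rtimes_A\R$, split on the Jordan form of $A$. If $\ker A$ contains a non-zero vector $v$, then $(v,0)\in G$ is central and generates a non-compact central copy of $\R$, so (ii) applies; if $A$ has non-zero purely imaginary eigenvalues, $t\mapsto e^{tA}$ is periodic with non-trivial kernel, giving a non-compact discrete central subgroup of the $\R$-factor, again reducing to (ii). The remaining case, $A$ invertible with no purely imaginary eigenvalue, yields a (generally non-Hermitian) connected solvable Lie group with trivial centre, for which Theorem~\ref{4.4a} does not literally apply; here I would argue by direct analysis of the ideal structure of $M(G)$, using the Mackey-induced form of the infinite-dimensional irreducibles (parametrised by $\R$-orbits in $\widehat{\R^2}$) together with the non-unimodularity of $G$ to exhibit, for every candidate $\mathcal{J}_{\pi,\xi}$, an obstruction to being simultaneously weak*-closed and maximal. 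This last sub-case is the principal obstacle: unlike (i)--(iii) there is neither a central subgroup nor finite-dimensionality to lean on, and the conclusion rests on a representation-specific computation.
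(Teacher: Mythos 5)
Your global reduction---``no irreducible representation vanishes at infinity, hence no weak*-closed maximal left ideal''---rests on Theorem \ref{4.4a}, but that theorem needs two hypotheses you never verify: that $G$ is Hermitian \emph{and} that every $\pi\in\widehat G$ vanishes at infinity modulo its projective kernel. In case (i) the second hypothesis can genuinely fail (for the infinite dihedral group, a non-compact Moore group, the generic two-dimensional irreducibles have trivial projective kernel and almost periodic matrix coefficients), and knowing merely that $\pi$ does not vanish at infinity does not preclude $\mathcal{J}_{\pi,\xi}$ from being weak*-closed, since Corollary \ref{4.4} only gives one implication. In case (ii) neither hypothesis is assumed, so you cannot even invoke Palmer's Theorem to put a weak*-closed maximal left ideal into the form $\mathcal{J}_{\pi,\xi}$. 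The paper avoids all of this: for (i) it uses only Corollary \ref{4.3c} (Hermitian suffices) plus the observation that a finite-dimensional $\pi$ makes $\mathcal{J}_{\pi,\xi}$ of finite codimension, contradicting Corollary \ref{1.3}; for (ii) it uses the purely module-theoretic Proposition \ref{1.4} (the action of $M(Z(G))$ on the simple module $M(G)/J$ is scalar, so $J\cap M(Z(G))$ has codimension $1$ in $M(Z(G))$, again contradicting Corollary \ref{1.3}), with no symmetry assumption whatever. For (iii) the paper likewise bypasses vanishing at infinity via Proposition \ref{4.9b} and Corollary \ref{4.9c}(ii); note also that your claim that $\pi|_N$ decomposes as a finite direct sum of irreducibles of $N$ is unjustified for non-type-I discrete groups---the ergodicity argument of Proposition \ref{4.9b} yields only finitely many factor summands, which is what the paper actually uses.

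Case (iv) is the more serious problem: you explicitly defer the main sub-case to ``a representation-specific computation'' that you do not carry out, so this part is a plan rather than a proof. Moreover your parenthetical that the remaining groups are ``generally non-Hermitian'' is wrong: $\R^2\rtimes_A\R$ is Hermitian by Leptin's theorem, and the induced representations $U^\gamma$ vanish at infinity modulo their kernels by Baggett--Taylor, so Theorem \ref{4.4a} \emph{does} apply here. The actual content of the paper's Example \ref{eg4.13b} is the explicit verification---splitting on whether $\{\gamma\}^\perp$ is invariant under the $\R$-action---that no $U^\gamma$ vanishes at infinity, exhibiting concrete vectors whose coefficient functions stay bounded away from zero along a sequence going to infinity; this is exactly the computation missing from your argument.
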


In case (iv), $\R$ acts on $\R^2$ via $t \colon x \rightarrow {\rm e}^{tA}x \ (t \in \R, \ x \in \R^2).$

We now discuss our final topic. We wish to better understand the relationship between the following properties of a closed left ideal of a measure algebra: (1) being generated by a projection, (2) being finitely-generated, and (3) being weak*-closed. Each of these notions is implied by the previous one, and it would be interesting to know whether or not they are all distinct. We are able to partially resolve this question by looking at the two-dimensional Euclidean rigid motion group in detail. 

\begin{theorem}		\label{0.4}
	Let $G = \R^2 \rtimes SO(2)$, and let $U = \ind_{\R^2}^G \chi$, where $\chi$ is the character on $\R^2$ given by $\chi(x) = \e^{ix_1} \ (x \in \R^2)$. Then
	\begin{enumerate}
		\item[\rm (i)] There exists a unit vector $\xi \in L^2(SO(2))$ such that $\mathcal{J}_{U, \xi}$ is a weak*-closed maximal left ideal that is not generated by a projection.
		\item[\rm (ii)] The left ideal $\mathcal{J}_{U,1}$ is weak*-closed, and is not generated by finitely many compactly supported measures.
	\end{enumerate}
\end{theorem}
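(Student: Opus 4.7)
The approach rests on placing $G = \R^2 \rtimes SO(2)$ within the class covered by Theorems \ref{4.4a} and \ref{4.15}: it is a connected Hermitian CCR Lie group, and the representation $U = \ind_{\R^2}^G\chi$ is irreducible by the Mackey machine, realised on $L^2(S^1)$ via $(U(x,R)\varphi)(\xi) = e^{i\langle x,\xi\rangle}\varphi(R^{-1}\xi)$. A direct Jacobi--Anger calculation gives the matrix coefficients $\langle U(x,R) v_0, v_n\rangle = i^n J_n(|x|) e^{-in\arg x}$, which decay like $|x|^{-1/2}$ by the classical Bessel asymptotics. Hence $U$ vanishes at infinity, and since by CCR every $\mathcal{I}_{U,\xi}$ is maximal modular in $L^1(G)$, Theorem \ref{4.4a} yields that $\mathcal{J}_{U,\xi}$ is weak*-closed and maximal in $M(G)$ for every unit vector $\xi \in \mathcal{H}_U$. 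However, $\int_G |J_n(|x|)|\,dg = \infty$ in two dimensions, so $U$ is not integrable.

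For part (i), fix any unit $\xi$ and suppose for contradiction that $\mathcal{J}_{U,\xi} = M(G) p$ for some projection $p \in M(G)$. Setting $q := \delta_e - p$, the internal direct sum $M(G) = M(G)p \oplus M(G)q$ identifies the irreducible module $M(G)/\mathcal{J}_{U,\xi} \cong \mathcal{H}_U$ (the isomorphism coming from $U$, with surjectivity since $U(M(G))\xi = \mathcal{H}_U$ by CCR) with the minimal left ideal $M(G)q$. Via the rephrasing of Barnes' theorem in Proposition \ref{4.11}, the existence of a projection with this property is equivalent to $U$ being integrable, contradicting the Bessel decay above. Hence no such projection exists.

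For part (ii), weak*-closedness of $\mathcal{J}_{U,1}$ follows from (i) with $\xi = 1$. Assume for contradiction that $\mathcal{J}_{U,1} = \sum_{i=1}^{n} M(G)*\mu_i$ with each $\mu_i$ compactly supported. Let $\pi\colon G\to\R^2$ be the projection and $\nu_i := \pi_*\mu_i$; each $\nu_i$ is compactly supported with $\widehat{\nu_i}|_{S^1} = 0$, so $\widehat{\nu_i}$ is real-analytic by Paley--Wiener. Using $(gh)_x = x_g + R_g x_h$ for $g = (x_g,R_g)$, $h = (x_h,R_h)$, one computes
\[
\widehat{\pi_*(\beta*\mu_i)}(\xi) = \int_G e^{-i\langle x,\xi\rangle}\widehat{\nu_i}(R^T\xi)\,d\beta(x,R) \qquad (\beta \in M(G)).
\]
Since $\nu \otimes \delta_e \in \mathcal{J}_{U,1}$ whenever $\nu \in I := \{\eta \in M(\R^2) : \widehat{\eta}|_{S^1} = 0\}$, the assumption forces every such $\widehat{\nu}$ to be representable as $\widehat{\nu}(\xi) = \sum_i \int_G e^{-i\langle x,\xi\rangle}\widehat{\nu_i}(R^T\xi)\,d\beta_i(x,R)$ for some $\beta_i \in M(G)$.

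Writing $\xi = re^{i\theta}$ and differentiating both sides at $r = 1$ — where the vanishing of $\widehat{\nu_i}$ on $S^1$ kills the cross term arising from $\partial_r$ of the exponential factor — yields
\[
\partial_r \widehat{\nu}(e^{i\theta}) = \sum_i \int_G e^{-i\langle x, e^{i\theta}\rangle} G_i(\theta - \phi)\,d\beta_i(x,e^{i\phi}),
\]
where $G_i(\theta) := \partial_r\widehat{\nu_i}(re^{i\theta})|_{r=1}$ is a fixed real-analytic function on $S^1$. Expanding $e^{-i\langle x,e^{i\theta}\rangle}$ via Jacobi--Anger and using that the compact support of $\nu_i$ forces the Fourier coefficients $|\widehat{G_i}(m)|$ to decay super-exponentially in $|m|$ (via the formula $\widehat{G_i}(m) = (-i)^m\int |x|J_m'(|x|)e^{-im\arg x}\,d\nu_i(x)$) restricts the achievable sequences $\{(\partial_r\widehat{\nu})^\wedge(m)\}_m$ to a proper subspace. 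Choosing $\nu \in I$ with $\widehat{\nu}(\xi) = (\|\xi\|^2 - 1)h(\xi)$ for a Schwartz $h$ whose boundary values on $S^1$ are smooth but non-analytic produces a radial derivative with only polynomial Fourier decay, outside this subspace, yielding the contradiction. The main obstacle will be ruling out cancellations from the coupling between the $x$-dependence and the rotational $\phi$-dependence in the $\beta_i$; this requires a careful Fourier-side decay analysis exploiting the Bessel bound $|J_k(r)| \leq 1$ together with the rapid decay of $\widehat{G_i}(m)$.
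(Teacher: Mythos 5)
Your part (i) is essentially the paper's argument: establish that $U$ vanishes at infinity, that $G$ falls under Theorem \ref{4.4a}, that $U$ is not integrable, and then invoke Proposition \ref{4.11}. Your direct route to non-integrability via the Bessel asymptotics $J_n(|x|)\sim |x|^{-1/2}$ is a legitimate alternative to the paper's appeal to \cite[Corollary 2]{B80} and the absence of open points in $\widehat G$. One correction, though: your claim that ``by CCR every $\mathcal{I}_{U,\xi}$ is maximal modular in $L^1(G)$'' is unjustified --- the paper explicitly notes that determining \emph{which} $\xi$ make $\mathcal{I}_{\pi,\xi}$ maximal modular is difficult (the automatic maximality you have in mind holds at the C*-algebra level, not in $L^1(G)$), and it only proves existence of \emph{some} such $\xi$ (Proposition \ref{4.13}). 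Since part (i) only needs one such $\xi$, this does not break your argument, but the ``for every unit vector'' assertion should be dropped. Likewise, the weak*-closedness of $\mathcal{J}_{U,1}$ in part (ii) should be cited to Corollary \ref{4.4}(i) (valid for \emph{any} $\xi$ once $U$ vanishes at infinity), not to part (i), which does not address the specific vector $\xi=1$.

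Part (ii) has a genuine gap. Your Fourier-side strategy --- push forward to $\R^2$, differentiate $\widehat{\nu}$ radially on $S^1$, and argue that the achievable radial derivatives form a proper subspace because of super-exponential decay of the coefficients $\widehat{G_i}(m)$ --- founders exactly where you say it does: the measures $\beta_i\in M(G)$ are arbitrary finite measures on a non-compact space, the angular Fourier coefficients of $e^{-i\langle x,e^{i\theta}\rangle}$ are Bessel functions $J_m(|x|)$ which do not decay in $m$ uniformly in $x$ (they are only small once $m\gtrsim |x|$), and so the integral against $d\beta_i(x,e^{i\phi})$ gives no control whatsoever on the Fourier decay of the right-hand side. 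The ``careful Fourier-side decay analysis'' you defer is the entire content of the proof, and it is not clear it can be carried out in this form. The paper's actual argument is quite different and sidesteps the coupling problem: it defines the partial sums $\sigma_n(\mu)=\int_{B_n}\langle U(t)1,1\rangle\,{\rm d}\mu(t)$ over the annular decomposition $B_n=B_0(n\rho)\times SO(2)$ and shows (Lemma \ref{5.3}) that if $\nu\in\mathcal{J}_{U,1}$ is supported in $B_1$ then $\sum_n|\sigma_n(\mu*\nu)|\leq 2\|\mu\|\,\|\nu\|$ for \emph{every} $\mu\in M(G)$ --- the point being that the inner integral over $B_{n-1}$ vanishes because $U(\nu)1=0$, leaving only boundary annulus contributions that telescope. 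It then constructs (Lemma \ref{5.4}) an explicit $\nu\in\mathcal{J}_{U,1}$, built from normalised uniform measures on circles of radius $2\pi kn$ with weights $n^{-3/2}$, for which $\sigma_n(\nu)=\sum_{m>n}\chi(\nu_m)\gtrsim 1/n$ by the asymptotics $J_0(2\pi kn)\sim (\pi\sqrt{2kn})^{-1}$, so that $\sum_n|\sigma_n(\nu)|$ diverges. You would need either to complete your Fourier-decay argument (in particular, to rule out the cancellations you flag) or to adopt an obstruction of this summability type.
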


We have been unable to resolve whether $\mathcal{J}_{U,1}$ is maximal, and whether it is finitely-generated.

Let us compare the previous result with the situation when $G$ is compact. In that case it follows from the classification theorem \cite[Theorem 6.2]{W3} that the weak*-closed maximal left ideals of $M(G)$ are given exactly by $\mathcal{J}_{\pi, \xi}$, for $\pi \in \widehat{G}$ and $\xi \in \mathcal{H}_\pi$ of norm $1$. Moreover, it is routinely verified that $\mathcal{J}_{\pi, \xi}$ is equal to $M(G)*(\delta_e - p)$, where $p$ is the projection in $L^1(G)$ defined by $p(t) = \frac{1}{\dim \mathcal{H}_\pi} \langle \pi(t^{-1}) \xi, \xi \rangle \ (t \in G)$.
In particular, in this setting, a maximal left ideal of $M(G)$ is weak*-closed if and only if it is generated by a projection.

The paper is organised as follows. In Section 2 we recall some background and fix our notation. In Section 3 we prove that $M(G)$ cannot have a proper weak*-closed left ideal of finite codimension unless $G$ is compact, and derive some consequences of this. We begin Section 4 by proving some general results that relate maximal modular left ideals of a Banach algebra $A$ to the weak*-closed left ideals of its multiplier algebra $M(A)$ in the special case that $M(A)$ is a dual Banach algebra. We then use these results to prove our main classification result, Theorem \ref{4.4a}, which establishes Conjecture \ref{0.1} in many cases, as well as Theorem \ref{4.3f}, which addresses the weighted setting. In Section 5 we discuss Barnes' Theorem and prove Conjecture \ref{0.2} for many Hermitian locally compact groups. Next, in Section 6, we give a detailed description of the weak*-closed maximal left ideals for some chosen examples. Section 7 is concerned with proving Theorem \ref{0.3} and Theorem \ref{0.4}. Finally, in Section 8 we list some open problems.

\section{Background and Notation}
\noindent
For us $\N = \{ 1,2,\ldots \}$. Let $X$ be a set, and let $Y \subset X$ be any subset. We denote by $\1_Y \colon X \rightarrow \{ 0, 1 \}$ the characteristic function of $Y$.

Let $E$ be a Banach space. We define $B_E$ to be the unit ball of $E$ and $S_E$ to be the unit sphere. We denote the dual space of $E$ by $E'$, and given a bounded linear map $\varphi \colon E \rightarrow F$ between Banach spaces, we write $\varphi' \colon F' \rightarrow E'$ for the dual map.
Given subsets $X \subset E$ and $Y \subset E'$ we shall write
$$X^\perp = \{ \lambda \in E' : \langle x, \lambda \rangle = 0 \ (x \in X) \} \qquad \text{and} \qquad Y_\perp = \{ x \in E : \langle x , \lambda \rangle = 0 \ (\lambda \in Y) \}.$$

Let $A$ be an algebra, and let $I$ be a left ideal of $A$. We say that $u \in A$ is a \textit{modular right identity} for $I$ if $a - au \in I$ for all $a \in A$. In the case that $I$ has a modular right identity we say that $I$ is a \textit{modular left ideal}. By a maximal modular left ideal we mean a maximal left ideal that is modular, or equivalently a left ideal that is maximal subject to being modular.  We say that $A$ is \textit{semisimple} if the intersection of the maximal modular left ideals of $A$ is $\{ 0 \}$.

Let $G$ be a locally compact group. We write $\int \cdots \dd t$ for integrals with respect to the left Haar measure on $G$, which we always take to be normalised when $G$ is compact.

By a \textit{representation of $G$} we shall always mean a strongly continuous unitary representation of $G$ on a Hilbert space. Typically a representation will be denoted by $(\pi, \mathcal{H}_\pi)$, where $\pi$ is a homomorphism from $G$ to the group of unitaries on the Hilbert space $\mathcal{H}_\pi$.  We shall write $\widehat{G}$ for the unitary dual of $G$, that is the collection of irreducible representations of $G$ up to unitary equivalence. Given a representation $(\pi, \mathcal{H}_\pi)$, and $\xi, \eta \in \mathcal{H}_\pi$ we shall write $\xi*_\pi \eta$ for the function $G \rightarrow \C$ given by $(\xi*_\pi \eta) (t) = \langle \pi(t) \xi, \eta \rangle \ (t \in G)$. We shall also write $\pkr \pi = \{ t \in G: \pi(t) \in \C \id_{\mathcal{H}_\pi} \}$, the \textit{projective kernel of $\pi$}, which is a closed normal subgroup of $G$. Note that, given vectors $\xi, \eta$, the function $G/\pkr \pi \rightarrow [0, \infty)$ given by $t \pkr \pi \mapsto |\langle \pi(t)\xi, \eta \rangle| \ (t \in G)$ is well-defined and continuous, and we say that $\pi$\textit{ vanishes at infinity modulo its projective kernel} if this function belongs to $C_0(G/\pkr \pi)$ for each $\xi, \eta \in \mathcal{H}_\pi$.

We say that a group $G$ is \textit{CCR} if, for every $\pi \in \widehat{G}$ and every $f \in L^1(G)$, the operator $\pi(f)$ is compact; this is equivalent to the Fell topology on $\widehat{G}$ being $T_1$. See \cite[Chapter 7]{F} for more details.

Let $(\pi, \mathcal{H}_\pi)$ be an irreducible representation of the locally compact group $G$. We recall the definitions of the following asymptotic properties that $\pi$ could have.

\begin{definition} 	\label{0.5}
	We say that
	\begin{enumerate}
		\item[\rm (i)] $\pi$ \textit{vanishes at infinity} if, for every $\xi, \eta \in \mathcal{H}_\pi$ we have $\xi *_\pi \eta \in C_0(G)$;
		
		\item[\rm (ii)] $\pi$ is \textit{integrable} if $\xi*_\pi \eta \in L^1(G)$ for some $\xi, \eta \in \mathcal{H}_\pi \setminus \{ 0 \}$;
		
		%\item[\rm (iii)] $\pi$ is \textit{square-integrable} if $\xi*_\pi \eta \in L^2(G)$ for some $\xi, \eta \in \mathcal{H}_\pi \setminus \{ 0 \}$. 
	\end{enumerate}
\end{definition}

It is known that an irreducible representation $\pi$ vanishes at infinity if and only if there exist some $\xi, \eta \in \mathcal{H}_\pi \setminus \{ 0 \}$ such that $\xi*_\pi \eta \in C_0(G)$. For background on asymptotic properties of representations see \cite{D77}.

The main focus of this article will be the measure algebra of a locally compact group, but we shall also prove a couple of results in the weighted setting, which we recall now. By a \textit{weight} on $G$ we mean a continuous function $\omega \colon G \rightarrow [1, \infty)$ such that $\omega(e) = 1$, and $\omega(st) \leq \omega(s) \omega(t) \ (s,t \in G)$. Many authors do not insist that weights need to be continuous, but we find this assumption convenient for the present article. We then define $$L^1(G,\omega) = \left\{ f \in L^1(G) : \int_G \vert f(t) \vert \omega(t) \dd t < \infty \right\},$$ 
and 
$$M(G, \omega) = \left\{ \mu \in M(G) :\int_G \omega(t) \dd \vert \mu \vert (t) < \infty \right\}.$$
We define a norm on $M(G, \omega)$ by $\Vert \mu \Vert_\omega = \int_G \omega(t) \dd \vert \mu \vert (t)$, and this makes it into a Banach algebra, with multiplication given by convolution. It has a  predual given by 
$$C_0(G, 1/\omega) = \{ f \colon G \rightarrow \C : f/\omega \in C_0(G) \}.$$
Moreover, $L^1(G, \omega)$ is a closed ideal of $M(G, \omega)$. When $\omega(t) = \omega(t^{-1})$ for all $t \in G$ both $M(G, \omega)$ and $L^1(G, \omega)$ are Banach *-algebras, with the involution given by 
$$f^*(t) = \Delta(t^{-1})\overline{f(t^{-1})} \qquad (t \in G),$$
where $\Delta$ is the modular function of $G$.

Given $\pi \in \widehat{G}$ and $\xi \in \mathcal{H}_{\pi}$, we shall write
$$\mathcal{I}_{\pi,\xi} = \{ f \in L^1(G) : \pi(f)\xi = 0 \} \qquad
\text{and}
\qquad \mathcal{J}_{\pi,\xi} = \{ \mu \in M(G) :  \pi(\mu)\xi = 0 \}.$$
Moreover, given a weight $\omega$ on $G$, we shall write $\mathcal{I}_{\pi,\xi, \omega} = \mathcal{I}_{\pi,\xi}\cap L^1(G, \omega)$ and $\mathcal{J}_{\pi,\xi, \omega} = \mathcal{J}_{\pi,\xi} \cap M(G, \omega).$ These are always closed left ideals of $L^1(G, \omega)$ and $M(G, \omega)$ respectively.

Let $A$ be a Banach *-algebra. By a \textit{projection} we mean a self-adjoint idempotent. We say that $p \in A$ is a \textit{minimal projection} if $Ap$ is a minimal left ideal in $A$. When $A$ is a C*-algebra or has the form $L^1(G)$ for some locally compact group $G$, a projection $p \in A$ is minimal if and only if $pAp = \C p$. However, this is in general a stronger condition than being minimal with respect to the usual partial order on projections.

There has long been known a procedure for constructing an irreducible *-representation $\pi$ from a minimal projection $p$ in a Banach *-algebra $A$, summarised by Barnes in \cite{B80}. Another important contribution to this theory is Valette's paper \cite{V84}. Following Barnes, we say that $\pi$ is \textit{determined by $p$}. Barnes \cite[Theorem 1]{B80} proved the following theorem, relating integrable representations with minimal projections.

\begin{theorem}[Barnes]		\label{0.7}
	Let $G$ be a unimodular locally compact group and let $\pi \in \widehat{G}$. Then $\pi$ is integrable if and only if $\pi$ is determined by a minimal projection in $L^1(G)$.
\end{theorem}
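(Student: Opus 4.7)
The plan is to prove both implications directly by exploiting Schur's orthogonality relations for a square-integrable irreducible representation $\pi$ of a unimodular group $G$ with formal dimension $d_\pi$.

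For the implication that $\pi$ integrable implies $\pi$ is determined by a minimal projection in $L^1(G)$, I would begin with $\xi_0, \eta_0 \in \mathcal{H}_\pi \setminus \{0\}$ such that $f_{\xi_0, \eta_0} := \xi_0 *_\pi \eta_0 \in L^1(G)$. Since matrix coefficients are uniformly bounded, $f_{\xi_0, \eta_0} \in L^1 \cap L^\infty \subset L^2(G)$, so $\pi$ is square-integrable and the Schur orthogonality relations apply. Expanding the pointwise convolution
\[
(f_{\xi_0, \eta_0} * f_{\eta_0, \xi_0})(t) = \int_G \langle \pi(s)\xi_0, \eta_0\rangle\, \overline{\langle \pi(s)\xi_0, \pi(t)\eta_0\rangle}\, \dd s,
\]
the Schur relations yield $f_{\xi_0, \eta_0} * f_{\eta_0, \xi_0} = d_\pi^{-1}\Vert \xi_0\Vert^2 f_{\eta_0, \eta_0}$. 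Since the left-hand side lies in $L^1(G)$, so does $f_{\eta_0, \eta_0}$. Rescaling so that $\Vert \eta_0\Vert = 1$ and setting $p := d_\pi f_{\eta_0, \eta_0} \in L^1(G)$, a further application of the same Schur identity gives $f_{\eta_0, \eta_0} * f_{\eta_0, \eta_0} = d_\pi^{-1} f_{\eta_0, \eta_0}$, hence $p * p = p$; self-adjointness $p^* = p$ follows from the identity $f_{\eta_0, \eta_0}(s^{-1}) = \overline{f_{\eta_0, \eta_0}(s)}$ in the unimodular setting. A parallel Schur-orthogonality computation shows that for every $g \in L^1(G)$ one has $p * g * p = c_g \cdot p$ for an explicit scalar $c_g$, and combined with $p = p * p * p \in p L^1(G) p$ this gives $p L^1(G) p = \C p$, so $p$ is minimal. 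Finally, the intertwiner $\xi \mapsto f_{\xi, \eta_0} \in L^1(G) * p$ identifies $\pi$ equivariantly with the representation determined by $p$.

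For the converse, assume $p$ is a minimal projection in $L^1(G)$. Realise the representation determined by $p$ on $\mathcal{H}_\pi$, the completion of $L^1(G) * p$ under the inner product characterised by $\langle ap, bp\rangle p := p b^* a p$ (normalised so that $\xi_p := p$ is a unit cyclic vector). The action of $\delta_t$ on this module is left convolution, so the matrix coefficient $\lambda(t) := \langle \pi(t) \xi_p, \xi_p\rangle$ is the unique scalar satisfying $p * \delta_t * p = \lambda(t) p$ in $L^1(G)$. Pairing this identity against any $\phi \in C_c(G)$ with $\int_G p(s)\phi(s)\, \dd s \neq 0$, and combining with the Fubini estimate
\[
\int_G |(p * \delta_t * p)(s_0)|\, \dd t \leq \Vert p\Vert_1^2 \qquad (s_0 \in G),
\]
which follows from $(p * \delta_t * p)(s_0) = \int p(s t^{-1}) p(s^{-1} s_0)\, \dd s$ and the unimodular change of variable $u = st^{-1}$, yields $\lambda \in L^1(G)$. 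Hence $\xi_p *_\pi \xi_p \in L^1(G)$, and $\pi$ is integrable.

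The main obstacle will be the careful bookkeeping in the Schur orthogonality computations, particularly in tracking the formal dimension $d_\pi$ and the various adjoints and conjugates through the unimodular (but non-compact) setting; one must also first verify that $\pi$ is square-integrable before invoking Schur. A secondary obstacle is confirming in the forward direction that the representation constructed from the minimal projection is genuinely unitarily equivalent to $\pi$ itself rather than to a related representation such as its conjugate, which is handled via the explicit intertwiner $\xi \mapsto f_{\xi, \eta_0}$.
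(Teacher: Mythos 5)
The paper does not prove this theorem: it is quoted verbatim from Barnes \cite[Theorem 1]{B80}, so there is no internal proof to compare against. Taken on its own terms, your outline follows the standard route (square-integrability plus the Schur orthogonality relations for the forward direction; the identity $p*\delta_t*p=\lambda(t)p$ plus a Fubini/unimodularity estimate for the converse), and the converse direction is correct as sketched: pairing $\lambda(t)p=p*\delta_t*p$ against a compactly supported $\phi$ with $\int p\phi \neq 0$ and integrating your bound $\int_G\vert(p*\delta_t*p)(s)\vert\dd t\leq\Vert p\Vert_1^2$ over $\supp\phi$ does give $\lambda\in L^1(G)$.

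The forward direction, however, contains a genuine error in the step you flagged as a ``secondary obstacle''. With the paper's convention $(\xi*_\pi\eta)(t)=\langle\pi(t)\xi,\eta\rangle$, your element $p:=d_\pi\,\eta_0*_\pi\eta_0$ is indeed a minimal self-adjoint idempotent, but it determines $\overline{\pi}$, not $\pi$: the orthogonality relations give $\pi(\eta_0*_\pi\eta_0)=0$ whenever $\pi\not\simeq\overline{\pi}$ (already visible for $G=\T$, $\pi(t)=\e^{it}$, where $p(t)=\e^{it}$ and $\pi(p)=\frac{1}{2\pi}\int\e^{2it}\dd t=0$), and by \cite[Proposition 1]{B80} a projection determining $\pi$ must satisfy $\pi(p)\neq 0$. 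Your proposed repair via the intertwiner $\xi\mapsto f_{\xi,\eta_0}$ does not work: one computes $p*f_{\xi,\eta_0}=f_{\xi,\eta_0}$ but $f_{\xi,\eta_0}*p=\langle\xi,\eta_0\rangle\,d_\pi^{-1}p$, so these functions lie in the \emph{right} ideal $pL^1(G)$ rather than the minimal \emph{left} ideal $L^1(G)p$ on which the representation determined by $p$ lives, and they intertwine $\pi$ with right (not left) translation. The fix is a one-line change of normalisation: take $p(t)=d_\pi\langle\pi(t^{-1})\eta_0,\eta_0\rangle=d_\pi\overline{(\eta_0*_\pi\eta_0)(t)}$, which is exactly the formula recorded in the paper's introduction for the compact case and is still integrable since $\vert\overline{f}\vert=\vert f\vert$. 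Then $\pi(p)$ is the rank-one projection onto $\eta_0$, and $g*p\mapsto\pi(g)\eta_0$ is a linear, left-translation-equivariant map of $L^1(G)p$ onto a dense subspace of $\mathcal{H}_\pi$, isometric up to the factor $\sqrt{d_\pi}$, which completes the identification correctly.
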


 Since minimal projections are defined as those that generate minimal left ideals, Barnes' Theorem tacitly establishes a link between left ideals and integrable representations. In Proposition \ref{4.11} below we shall reformulate this result in terms of maximal left ideals of the measure algebra of the group.

We say that a Banach *-algebra $A$ is \textit{Hermitian} if every self-adjoint element of $A$ has real spectrum. A locally compact group $G$ is called Hermitian if $L^1(G)$ is Hermitian. Examples of Hermitian locally compact groups include compact extensions of nilpotent locally compact groups and Moore groups \cite{P01}. It has recently been shown by Samei and Weirsma \cite{SW} that Hermitian locally compact groups are amenable, solving a long-standing open problem.

Our interest in Hermitian Banach *-algebras is due to Palmer's Theorem \cite{P72}, which states that a Banach *-algebra $A$ is Hermitian if and only if every maximal modular left ideal of $A$ has the form $\{ a \in A : \psi(a^*a) = 0 \}$, for some pure state $\psi$ on $A$. For the special case of a Hermitian group algebra, this theorem states the following.

\begin{theorem}[Palmer]		\label{0.6}
	Let $G$ be a Hermitian locally compact group. The maximal modular left ideals of $L^1(G)$ all have the form $\mathcal{I}_{\pi, \xi},$
	where $\pi \in \widehat{G}$, and $\xi$ is a unit vector in $\mathcal{H}_\pi$.
\end{theorem}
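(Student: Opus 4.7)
The plan is to deduce this from the general Palmer Theorem for Hermitian Banach *-algebras together with the standard GNS construction and the bijective correspondence between non-degenerate *-representations of $L^1(G)$ and unitary representations of $G$.

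First, I would invoke the general Palmer Theorem cited in the excerpt: since $G$ is Hermitian, $L^1(G)$ is a Hermitian Banach *-algebra, so every maximal modular left ideal $L$ of $L^1(G)$ has the form $L = \{ f \in L^1(G) : \psi(f^**f) = 0 \}$ for some pure state $\psi$ on $L^1(G)$. The task is thus to translate this description into one involving an irreducible unitary representation of $G$ and a vector.

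Next, I would apply the GNS construction to $\psi$. This yields a cyclic *-representation $(\pi_\psi, \mathcal{H}_\psi, \xi_\psi)$ of $L^1(G)$ with $\|\xi_\psi\| = 1$ such that
\[
\psi(f^**f) = \|\pi_\psi(f) \xi_\psi\|^2 \qquad (f \in L^1(G)).
\]
Because $\psi$ is a pure state, $\pi_\psi$ is irreducible. Moreover, using a bounded approximate identity of $L^1(G)$ one checks that $\pi_\psi$ is non-degenerate, so that by the standard integration/disintegration correspondence it arises as $\pi_\psi = \pi$ for a (unique up to unitary equivalence) irreducible strongly continuous unitary representation $\pi$ of $G$ on $\mathcal{H}_\pi := \mathcal{H}_\psi$. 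Setting $\xi := \xi_\psi$, we obtain
\[
L = \{ f \in L^1(G) : \|\pi(f)\xi\|^2 = 0 \} = \{ f \in L^1(G) : \pi(f) \xi = 0 \} = \mathcal{I}_{\pi, \xi},
\]
as required.

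The only steps that require care are the verification that the GNS representation of $L^1(G)$ is non-degenerate (so that it genuinely corresponds to a unitary representation of $G$) and that the GNS cyclic vector can be taken to be a unit vector; both are standard consequences of $\psi$ being a (normalised) state and of the existence of a bounded approximate identity in $L^1(G)$. I do not expect any serious obstacle: once the general Palmer Theorem is granted, the proof is essentially a bookkeeping exercise translating between the language of pure states and that of irreducible unitary representations via GNS.
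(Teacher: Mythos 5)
Your proposal is correct and follows exactly the route the paper intends: the paper states this result as the specialisation of Palmer's general theorem on Hermitian Banach *-algebras to $A = L^1(G)$, with the translation from pure states to irreducible unitary representations of $G$ carried out via the GNS construction and the standard correspondence between non-degenerate *-representations of $L^1(G)$ and unitary representations of $G$. The details you flag (non-degeneracy via the bounded approximate identity, normalisation of the cyclic vector) are precisely the routine bookkeeping the paper leaves implicit.
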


Unfortunately, Palmer's Theorem does not tell us which of the left ideals $\mathcal{I}_{\pi, \xi}$ are maximal modular, and it seems to us that this is generally difficult to determine. Since $L^1(G)$ is semisimple, we know that it has enough maximal modular left ideals to separate points. We shall prove a more precise result for Hermitian CCR locally compact groups in Proposition \ref{4.13}.

We next recall some background on dual Banach algebras. A \textit{dual Banach algebra} is a Banach algebra $A$ that has a Banach space predual $X$ in such a way that the multiplication on $A$ is separately weak*-continuous. For example, the measure algebra of a locally compact group $G$ is a dual Banach algebra with predual $C_0(G)$; more generally, given a weight $\omega$ on $G$, the weighted measure algebra $M(G, \omega)$ is a dual Banach algebra with predual $C_0(G, 1/\omega)$ (as verified in \cite{W0}). This is where we use the assumption that $\omega$ is continuous. Other examples include von Neumann algebras, and the Fourier-Stieltjes algebra $B(G)$ of a locally compact group $G$. 
%In this article we study left ideals of $M(G, \omega)$ which are closed with respect to the weak*-topology induced by $C_0(G, 1/\omega)$. By a weak*-closed maximal left ideal we mean a maximal left ideal closed in this topology; this may be different from a left ideal which is maximal subject to being weak*-closed.

Given a Banach algebra $A$ we shall write $M(A)$ for its multiplier algebra.
 %The multiplier algebra of $L^1(G,\omega)$ may be identified isometrically with $M(G,\omega)$, for any locally compact group $G$ and any weight $\omega$ on $G$.
Recall that $A$ is faithful if $Ax = \{ 0 \}$ implies that $x = 0 \ ( x \in A)$, and also $xA = \{ 0 \}$ implies that $x = 0 \ (x \in A)$. The map canonical $A \rightarrow M(A)$ is injective whenever $A$ is faithful, and we shall typically identify a faithful Banach algebra with its image inside $M(A)$. When $A$ has a contractive approximate identity, the canonical map is an isometry. This is the case for Banach algebras of the form $L^1(G, \omega)$, and in this case the multiplier algebra may be identified isometrically with $M(G, \omega)$ \cite[Theorem 7.14]{DL}. In \cite{Da, HA, W3} multiplier algebras that are also dual Banach algebras were studied, and in \cite{W3} correspondences between the ideal structure of a Banach algebra and that of its multiplier algebra were explored. The present work could be seen as a continuation of these investigations.

\section{Left Ideals of Finite Codimension}
\noindent
In this section we shall prove that, for a locally compact group $G$, the measure algebra $M(G)$ can never have a weak*-closed left ideal of finite codimension unless $G$ is compact, and note some of the immediate consequences of this fact. Our strategy is to prove the corresponding result for the predual, namely that, when $G$ is not compact, $C_0(G)$ has no non-zero finite-dimensional linear subspaces which are invariant under left translation.  

We begin by proving a short lemma.

\begin{lemma} 	\label{1.1}
	Let $n \in \N$, and let $\lambda_0, \ldots, \lambda_n \in \C$, and suppose that 
	$$\vert \lambda_j \vert \leq 2^{-n} \sum_{i \neq j} \vert \lambda_i \vert \quad (j = 0, \ldots, n).$$
	Then $\lambda_0 = \cdots = \lambda_n = 0$.
\end{lemma}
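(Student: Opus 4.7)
The plan is to sum the $n+1$ hypothesised inequalities and extract a contradiction unless every $\lambda_j$ vanishes. Set $S = \sum_{i=0}^n |\lambda_i|$. The hypothesis rewrites as $|\lambda_j| \leq 2^{-n}(S - |\lambda_j|)$, so rearranging gives
\[
|\lambda_j| \leq \frac{2^{-n}}{1 + 2^{-n}}\, S = \frac{1}{2^n + 1}\, S \qquad (j = 0, \ldots, n).
\]

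Summing this over the $n+1$ indices yields $S \leq \frac{n+1}{2^n + 1}\, S$. For $n \geq 1$ one has $n + 1 < 2^n + 1$ (an immediate induction, or just observing $n < 2^n$), so the coefficient $\frac{n+1}{2^n+1}$ is strictly less than $1$. Hence $S = 0$, which forces $\lambda_0 = \cdots = \lambda_n = 0$. The case $n = 0$ is degenerate but trivial: the empty sum on the right forces $|\lambda_0| \leq 0$.

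There is no real obstacle here; the only thing to be careful about is that the factor $2^{-n}$ in the statement is what makes the sum collapse, and a weaker constant (say, anything giving coefficient $\geq \frac{1}{n+1}$ after rearrangement) would not suffice. The exponential decay in $n$ built into the hypothesis is therefore essential, and one should mention this to make clear why the bound was chosen in this form — presumably for later use when $n$ ranges over arbitrarily large values.
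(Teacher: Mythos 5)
Your proof is correct, and it is genuinely different from (and simpler than) the one in the paper. The paper proves the lemma by induction on $n$: it fixes $j$, picks some $k \neq j$, and shows by substituting the hypothesised bound on $|\lambda_k|$ into the other inequalities that the remaining $n$ numbers $\{\lambda_p : p \neq k\}$ satisfy the hypothesis with $n$ replaced by $n-1$, whence they all vanish by induction. Your direct summation argument avoids the induction entirely and, as a bonus, makes the true threshold visible: writing the hypothesis with a constant $c$ in place of $2^{-n}$, your rearrangement gives $|\lambda_j| \leq \frac{c}{1+c}S$, and summing kills $S$ precisely when $(n+1)\frac{c}{1+c} < 1$, i.e.\ $c < 1/n$; the all-ones example $\lambda_0 = \cdots = \lambda_n = 1$ shows $c = 1/n$ genuinely fails, so this is sharp. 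One small quibble with your closing remark: it is not the \emph{exponential} decay that is essential, only that the constant be below $1/n$ --- something like $\frac{1}{2n}$ would serve equally well; $2^{-n}$ is simply a convenient choice that is comfortably below the threshold and matches the $2^{-n}$ bound available from the $C_0$ function in the application (Proposition \ref{1.2}). This does not affect the validity of your proof.
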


\begin{proof}
Without loss of generality, assume that $|\lambda_0| \geq |\lambda_j| \ (j = 0, \ldots, n)$. Then
$$|\lambda_0| \leq  2^{-n} \sum_{i=1}^n \vert \lambda_i \vert \leq \frac{n}{2^n} |\lambda_0|,$$
forcing $\lambda_0 = 0$, and hence $\lambda_j = 0 \ (j = 1, \ldots, n)$.

	%We proceed by induction on $n$, with the base case $n=0$ being trivial (with empty sums interpreted as zero).
	%Suppose that $n \geq 1$. Fix $j \in \{ 0, \ldots, n \}$. 
	%We may choose $k \in \{ 0, \ldots, n \}$ with $j \neq k$. Our hypothesis implies that, for every $p \neq k$, we have 
	%\begin{align*}
	%\vert \lambda_p \vert &\leq 2^{-n} \sum_{i \neq p} \vert \lambda_i \vert  
	%= 2^{-n} \left( \vert \lambda_k \vert + \sum_{i \neq p, k}\vert \lambda_i \vert \right)
	%\leq 2^{-n} \left( 2^{-n} \sum_{i \neq k } \vert \lambda_i \vert 
	%+ \sum_{i \neq p, k} \vert \lambda_i \vert  \right) \\
	%&= 2^{-2n} \vert \lambda_p \vert + (2^{-n} + 2^{-2n}) \sum_{i \neq p, k} \vert \lambda_i \vert.
	%\end{align*}
	%Rearranging, we obtain
	%$$\vert \lambda_p \vert \leq \frac{2^{-n} + 2^{-2n}}{1-2^{-2n}} \sum_{i \neq p, k} \vert \lambda_i \vert = \frac{2^{-n}}{1-2^{-n}} \sum_{i \neq p, k} \vert \lambda_i \vert \leq 2^{-(n-1)} \sum_{i \neq p, k} \vert \lambda_i \vert$$
	%for every $p \neq k$. 
	%We may now apply the induction hypothesis to conclude that $\lambda_p = 0 $ for every $p \neq k$, so that in particular $\lambda_j = 0.$ As $j$ was arbitrary, this gives the result.
\end{proof}
In the next proposition, given $t \in G$ and $\phi \in C_0(G)$, we write $(L_t \phi )(s) := \phi(ts) \ (s \in G)$.

\begin{proposition}		\label{1.2}
	Let $G$ be a locally compact group. Then $C_0(G)$ contains a non-zero finite-dimensional linear subspace invariant under left translation if and only if $G$ is compact.
\end{proposition}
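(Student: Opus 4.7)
The plan is to dispose of the forward direction immediately and, for the converse, to spread the mass of a putative non-zero element of $V$ across pairwise-disjoint translates, producing a system of inequalities to which Lemma~\ref{1.1} applies.

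For the forward direction, if $G$ is compact then $C_0(G) = C(G)$ contains the constant functions, and these form a one-dimensional left-translation invariant subspace. So suppose $G$ is not compact and, for contradiction, that $V \subseteq C_0(G)$ is a left-translation invariant subspace with $N := \dim V \geq 1$. First I would pick $\phi \in V$ with $\|\phi\|_\infty = 1$; since $\phi \in C_0(G)$ is continuous and vanishes at infinity, the supremum is attained at some $t_* \in G$, so $|\phi(t_*)| = 1$. Next, I would choose a compact set $K \subseteq G$ with $t_* \in K$ and $|\phi(t)| < 2^{-N}$ for every $t \notin K$.

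Since $K K^{-1}$ is compact and $G$ is not, I can inductively pick $s_0, s_1, \ldots, s_N \in G$ with $s_j \notin \bigcup_{i<j} s_i K K^{-1}$, which guarantees that the translates $s_0 K, \ldots, s_N K$ are pairwise disjoint. Setting $\phi_i := L_{s_i^{-1}} \phi \in V$ produces $N+1$ vectors in the $N$-dimensional space $V$, so they must be linearly dependent: there exist scalars $\lambda_0, \ldots, \lambda_N$, not all zero, with $\sum_{i=0}^{N} \lambda_i \phi_i = 0$. The crux of the argument is to evaluate this identity at each point $s_j t_*$: the diagonal term is $\phi_j(s_j t_*) = \phi(t_*)$ of modulus $1$, while for $i \neq j$ the point $s_j t_* \in s_j K$ lies outside $s_i K$, so $s_i^{-1} s_j t_* \notin K$ and $|\phi_i(s_j t_*)| = |\phi(s_i^{-1} s_j t_*)| < 2^{-N}$. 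Solving for $\lambda_j$ yields $|\lambda_j| \leq 2^{-N} \sum_{i \neq j} |\lambda_i|$ for every $j \in \{0, \ldots, N\}$, and Lemma~\ref{1.1} with $n = N$ then forces every $\lambda_i$ to vanish, the desired contradiction.

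The main obstacle is arranging the compact set $K$ and the points $s_i$ so that the ``bumps'' of the translates $\phi_i$, on which they carry essentially all of their sup norm, are pairwise disjoint, and calibrating the decay constant $2^{-N}$ so that it exactly matches the hypothesis of Lemma~\ref{1.1}; this is where both non-compactness of $G$ and the $C_0$-property of $\phi$ are used in an essential way.
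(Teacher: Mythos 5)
Your proof is correct and follows essentially the same route as the paper's: translate $\phi$ by well-separated group elements, use linear dependence of the $N+1$ translates in the $N$-dimensional space, evaluate at points where the diagonal term has modulus $1$ and the off-diagonal terms are below $2^{-N}$, and invoke Lemma~\ref{1.1}. The only cosmetic difference is that you arrange the smallness of the off-diagonal terms by making the translates $s_iK$ pairwise disjoint (avoiding $\bigcup_{i<j} s_iKK^{-1}$), whereas the paper avoids the two families $x_0K^{-1}t_i$ and $Kx_0^{-1}t_i$ directly; both achieve exactly the same estimates.
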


\begin{proof}
	If $G$ is compact, then $\C 1$ is such a subspace. Suppose that $G$ is not compact, and assume towards a contradiction that there is a left-translation-invariant linear subspace $F \subset C_0(G)$ of dimension $n \in \N$. 
	
	Choose $\phi \in F$ of norm $1$, and let $x_0 \in G$ satisfy $1 = \vert \phi (x_0) \vert$. Let $K \subset G$ be a compact subset such that $\vert \phi (t) \vert < 2^{-n}$ whenever $t \in G \setminus K$. Inductively choose elements $t_0, t_1, \ldots, t_n \in G$ with $t_0 = e$, and such that 
	$$t_j \notin \left( \bigcup_{i = 0}^{j-1} x_0 K^{-1} t_i \right) \cup
	\left( \bigcup_{i = 0}^{j-1} K x_0^{-1}t_i \right)
	% \cup \left( \bigcup_{i=0}^{j-1}t_ix_0K^{-1} \right) 
	\quad (j = 1, \ldots, n).$$
	This is possible since each such set is compact, whereas $G$ is not. Our sequence $(t_j)$ satisfies 
	$$\vert \phi(t_it_j^{-1} x_0) \vert < 2^{-n} \quad (i \neq j, \ i,j = 0, \ldots, n).$$
	
	Since each function $L_{t_i} \phi $ also belongs to $F$, which has dimension $n$, there must exist $\lambda_0, \ldots, \lambda_n \in \C$, not all zero, such that 
	$$\lambda_0 L_{t_0}\phi + \cdots + \lambda_n L_{t_n} \phi = 0.$$
	%By scaling, we may assume that there is a $k \in \{0, \ldots, n \}$ such that $\lambda_k = 1$.
	Given $j \in \{0, \ldots, n \}$ we have 
	\begin{align*}
	\vert \lambda_j \vert &= \vert \lambda_j \phi(x_0) \vert 
	= \vert \lambda_j (L_{t_j} \phi)(t_j^{-1}x_0) \vert = 
	\left\vert \sum_{i \neq j} \lambda_i (L_{t_i}\phi)(t_j^{-1}x_0) \right \vert 
	= \left\vert \sum_{i \neq j} \lambda_i \phi(t_it_j^{-1}x_0) \right\vert \leq 2^{-n}\sum_{i \neq j} \vert \lambda_i \vert,
	\end{align*}
	which implies that $\lambda_j = 0$ for all $j\in \{0, \ldots, n \}$ by Lemma \ref{1.1}. This contradiction completes the proof.
\end{proof} 

\begin{corollary}		\label{1.3}
	Let $G$ be a locally compact group which is not compact. Then $M(G)$ contains no proper weak*-closed left ideals of finite codimension, and hence no finitely-generated, closed left ideals of finite codimension.
\end{corollary}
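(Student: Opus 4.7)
The plan is to dualize using the fact that $M(G) = C_0(G)'$, converting the statement about weak*-closed left ideals of finite codimension in $M(G)$ into a statement about finite-dimensional translation-invariant subspaces of $C_0(G)$, which is then ruled out by Proposition \ref{1.2}.

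More precisely, suppose toward a contradiction that $I$ is a proper weak*-closed left ideal of $M(G)$ of finite codimension $n$. Since $I$ is weak*-closed, its pre-annihilator $F := I_\perp \subset C_0(G)$ satisfies $(F^\perp)_\perp = F$ and has $\dim F = \codim I = n \geq 1$. The first step is to verify that $F$ is invariant under left translation. For $s \in G$ and $f \in F$, for any $\mu \in I$ one computes
$$\langle \mu, L_s f \rangle = \int_G f(st) \dd \mu(t) = \langle \delta_s * \mu, f \rangle,$$
which vanishes because $I$ is a left ideal and $\delta_s \in M(G)$, so $\delta_s * \mu \in I$ and $f \in F$. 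Hence $L_s f \in F$, and $F$ is a nonzero finite-dimensional left-translation-invariant subspace of $C_0(G)$.

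Now since $G$ is not compact, Proposition \ref{1.2} forbids the existence of such an $F$. This contradiction shows that $I$ cannot exist, proving the first assertion.

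For the second assertion, recall the fact (noted in the introduction) that any finitely-generated norm-closed left ideal of a measure algebra is automatically weak*-closed. Thus any finitely-generated closed left ideal of $M(G)$ of finite codimension would furnish a proper weak*-closed left ideal of finite codimension, which has just been ruled out. The only step that requires any thought is the invariance calculation above, and this is essentially a matter of bookkeeping with the translation convention $(L_s f)(t) = f(st)$ used in Proposition \ref{1.2}; no further obstacle is anticipated.
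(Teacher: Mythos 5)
Your proof is correct and follows essentially the same route as the paper: pass to the pre-annihilator $I_\perp \subset C_0(G)$, note it is a non-zero finite-dimensional left-translation-invariant subspace, and contradict Proposition \ref{1.2}. The only cosmetic difference is that you verify the translation-invariance of $I_\perp$ by direct computation, whereas the paper cites \cite[Lemma 3.3]{W1} for this.
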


\begin{proof}
	Assume towards a contradiction that $M(G)$ has a proper weak*-closed left ideal $J$ of finite codimension. By \cite[Lemma 3.3]{W1}, $J_\perp$ is a non-zero closed linear subspace of $C_0(G)$, invariant under left translation, and since $(J_\perp)' \cong \frac{M(G)}{J}$, it is finite-dimensional. This contradicts Proposition \ref{1.2}.
\end{proof}

One consequence of this corollary is that $M(G)$ has no weak*-closed maximal left ideals when $G$ is a non-compact Moore 
(see Theorem \ref{0.3}(i), proved in Section 7 below). Another consequence is the following result, which puts a structural constraint on those locally compact groups $G$ for which $M(G)$ has a weak*-closed maximal left ideal. 

\begin{proposition}		\label{1.4}
	Let $G$ be a locally compact group such that $M(G)$ has a weak*-closed maximal left ideal. Then $G$ has compact centre.
\end{proposition}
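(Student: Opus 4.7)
The plan is to use the fact that $\delta_z$ is central in $M(G)$ for each $z \in Z(G)$ to produce a unimodular character $\chi \colon Z(G) \to \T$ with the property that $\delta_z - \chi(z)\delta_e \in J$. Pairing this relation with any $\phi \in J_\perp \subset C_0(G)$ then shows $|\phi|$ is constant on $Z(G)$, which is incompatible with $\phi \in C_0(G)$ and $\phi(e) \neq 0$ unless $Z(G)$ is compact.

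First I would let $J$ be a weak*-closed maximal left ideal of $M(G)$. Since $M(G)$ is unital and $J$ is proper, $M(G)/J$ is a non-zero Banach space, and by maximality of $J$ it is an algebraically simple left $M(G)$-module. For each $z \in Z(G)$, left multiplication by $\delta_z$ defines a bounded $M(G)$-module endomorphism $T_z$ of $M(G)/J$ because $\delta_z$ commutes with everything. The spectrum $\sigma(T_z)$ in the Banach algebra of bounded operators on $M(G)/J$ is non-empty, so I pick $\lambda \in \sigma(T_z)$; then $T_z - \lambda I$ is bounded but not invertible, hence not bijective by the open mapping theorem. As a module endomorphism of the simple module $M(G)/J$, Schur's lemma forces $T_z - \lambda I = 0$, so $\delta_z$ acts as the scalar $\chi(z) := \lambda$ on $M(G)/J$. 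The identities $T_z T_{z^{-1}} = I$ and $\|T_z\| \leq 1$ give $|\chi(z)| = 1$, and $\chi \colon Z(G) \to \T$ is a group homomorphism since $\delta_z \delta_w = \delta_{zw}$.

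To conclude, note that $\delta_z - \chi(z)\delta_e \in J$, so every $\phi \in J_\perp$ satisfies $\phi(z) = \chi(z)\phi(e)$ on $Z(G)$. Since $J = (J_\perp)^\perp$ is proper, $\delta_e \notin J$, so there exists $\phi \in J_\perp$ with $\phi(e) \neq 0$; for this $\phi$, $|\phi(z)| = |\phi(e)| > 0$ throughout $Z(G)$. Because $Z(G)$ is a closed subgroup of $G$, the restriction $\phi|_{Z(G)}$ lies in $C_0(Z(G))$, and this together with constancy of $|\phi|$ on $Z(G)$ compels $Z(G)$ to be compact. The main technical point is the Schur-type step: algebraic Schur by itself yields only a division ring of $M(G)$-module endomorphisms, so non-emptiness of the spectrum combined with the open mapping theorem is essential in order to turn non-invertibility into failure of bijectivity and conclude that $T_z$ really is a scalar; once that central character is in hand, the topological conclusion is immediate.
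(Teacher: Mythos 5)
Your proof is correct, and while its first half coincides with the paper's, it finishes by a genuinely different and more self-contained route. The paper also begins by observing that the central measures $\delta_z$ act as scalars on the simple module $M(G)/J$ (it quotes Palmer's commutant theorem for this, which is exactly the spectrum-plus-Schur argument you carry out inline); but it then concludes that $J \cap M(Z(G))$ is a proper weak*-closed left ideal of codimension $1$ in $M(Z(G))$ and invokes Corollary \ref{1.3} (and hence Proposition \ref{1.2} and Lemma \ref{1.1}, the translation-invariant finite-dimensional subspace machinery) applied to the group $Z(G)$. You instead extract the unimodular character $\chi$ explicitly, note that $\delta_z - \chi(z)\delta_e \in J$, and pair against an element $\phi$ of the predual annihilator $J_\perp$ with $\phi(e) \neq 0$ (which exists because $J = (J_\perp)^\perp$ is proper), so that $|\phi|$ is constant and nonzero on the closed subgroup $Z(G)$, which is then contained in the compact set $\{t : |\phi(t)| \geq |\phi(e)|/2\}$. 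This bypasses Section 3 of the paper entirely and is arguably slicker for this one proposition; the paper's detour through Corollary \ref{1.3} costs nothing there since that corollary is needed independently for Theorem \ref{0.3}(i) and Proposition \ref{4.9b}. All the delicate points in your argument check out: $T_z$ is well defined and contractive on the quotient because $J$ is a left ideal, non-invertibility passes to non-bijectivity by the open mapping theorem, and $T_zT_{z^{-1}} = I$ together with $\|T_{z^{\pm 1}}\| \leq 1$ forces $|\chi(z)| = 1$.
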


\begin{proof}
	Let $J$ be a weak*-closed maximal left ideal of $M(G)$, and consider $I = J \cap M(Z(G))$. Then $I$ is closed in the weak*-topology on 
	$M(Z(G))$. Let $T \colon M(G) \rightarrow \B(M(G)/J)$ be given by $T(\mu)(\nu + J ) = \mu*\nu + J \ (\mu, \nu \in M(G)).$ 
	Then since $T[M(Z(G))] \subset Z(T[M(G)])$, and $M(G)/J$ is a simple left $M(G)$-module, we must have $T[M(Z(G))] = \C \id_{M(G)/J}$ by
	\cite[Theorem 4.2.11]{P94}. Hence $\ker T|_{M(Z(G))}$ has codimension-1 in $M(Z(G))$, but also clearly 
	$\ker T|_{M(Z(G))} \subset I$, so that $I$ has codimension-1. It now follows from Corollary \ref{1.3} that $Z(G)$ is compact, as required.
\end{proof}

We shall show in Example \ref{eg4.13b} below that there are locally compact groups $G$ for which $M(G)$ has no weak*-closed maximal left ideals, but for which $G$ has trivial centre. In Corollary \ref{4.9c} we shall show that Proposition \ref{1.4} can be strengthened in the case of a Hermitian group.

\section{Classification Results for Weak*-Closed Maximal Left Ideals}
\noindent
In this section we shall give a classification of the weak*-closed maximal left ideals of $M(G)$, valid for $G$ belonging to a large family of Hermitian locally compact groups. Our classification shall be in terms of the irreducible representations of $G$ and the maximal modular left ideals of $L^1(G)$. The latter objects seem very difficult to determine if the group has infinite dimensional irreducible representations, even when the representation theory is well understood. But, as we shall see in subsequent sections, our classification is nonetheless good enough to give us valuable insights into the ideal structure of $M(G)$. As we shall detail in Section 7, for many examples our classification will actually show that the measure algebra has no weak*-closed maximal left ideals, and hence no finitely-generated maximal left ideals either.

We begin by proving some lemmas in a very general context. We hope that this will allow us to prove classification results about weak*-closed left ideals of other multiplier algebras which are also dual Banach algebras, such as the Fourier--Stieltjes algebra of a locally compact amenable group, in future work. We shall make use of these general lemmas in this article to study the ideal structure of measure algebras and their weighted analogues.

The first lemma tells us how to determine maximality of a left ideal of $M(A)$ by looking at a left ideal of $A$, in the special case that $M(A)$ is a dual Banach algebra and $A$ is faithful and weak*-dense in $M(A)$.
Note that, by \cite[Lemma 5.2]{W3}, $A$ is weak*-dense in $M(A)$ whenever $M(A)$ is a dual Banach algebra and $A$ has a bounded approximate identity.

\begin{lemma} 		\label{4.3a}
	Let $B$ be a dual Banach algebra, and let $A$ be a weak*-dense ideal of $B$.
	\begin{enumerate}
		\item[ \rm (i)] If $J$ is a weak*-closed left ideal of $B$ such that $J \cap A$ is a maximal modular left ideal of $A$ then 
		$J$ is maximal.
		\item[ \rm (ii)] If $I$ is a maximal modular left ideal of $A$, then either $\overline{I}^{w^*}$ is a maximal left ideal in $B$, or else $I$ is weak*-dense in $B$.
	\end{enumerate}
\end{lemma}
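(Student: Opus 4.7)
\textbf{Plan for Lemma \ref{4.3a}.} The central mechanism is to promote the modular right identity $u \in A$ for $J \cap A$ (guaranteed by its maximal modularity in $A$) to a modular right identity for $J$ in $B$, and then to show that the $B$-module $B/J$ coincides with the simple $A$-module $A/(J \cap A)$.

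For part~(i), I would first prove the transfer claim: $b - bu \in J$ for every $b \in B$. Since $A$ is weak*-dense in $B$, pick a net $(a_\alpha) \subset A$ with $a_\alpha \to b$ weak*. Separate weak*-continuity of multiplication in the dual Banach algebra $B$ gives $a_\alpha u \to bu$ weak*, hence $a_\alpha - a_\alpha u \to b - bu$ weak*; each $a_\alpha - a_\alpha u$ lies in $J \cap A \subset J$, and weak*-closedness of $J$ delivers $b - bu \in J$. Next, using that $A$ is a (two-sided) ideal, $bu \in BA \subseteq A$, so every $b \in B$ decomposes as $bu + (b - bu) \in A + J$, giving $A + J = B$. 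Consequently the natural $A$-linear map $A/(J \cap A) \to B/J$, $a + J \cap A \mapsto a + J$, is both injective and surjective, hence an isomorphism of left $A$-modules. Since $J \cap A$ is maximal in $A$, the left-hand side is a simple $A$-module, so $B/J$ is simple as an $A$-module. Every $B$-submodule of $B/J$ is automatically an $A$-submodule, so $B/J$ has no non-trivial $B$-submodules; this means $J$ is a maximal left ideal of $B$ (and it is proper because $J \cap A \neq A$ forces some $a \in A \setminus J$).

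For part~(ii), set $J := \overline{I}^{w^*}$, a weak*-closed left ideal of $B$. The inclusion $I \subseteq J \cap A$ together with maximality of $I$ in $A$ leaves only two possibilities: $J \cap A = I$ or $J \cap A = A$. In the first case, $J \cap A$ is maximal modular in $A$ and part~(i) applies to show $J$ is a maximal left ideal of $B$. In the second case $A \subseteq J$, and since $A$ is weak*-dense and $J$ is weak*-closed we get $J = B$, i.e.\ $I$ is weak*-dense in $B$.

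The step I expect to be most delicate is the first one in part~(i): guaranteeing that the single element $u$ behaves as a modular right identity for $J$ in all of $B$. This is where both hypotheses on $B$ — being a dual Banach algebra (so multiplication is separately weak*-continuous) and containing $A$ as a weak*-dense subset — are genuinely needed. Once this transfer is achieved, the module-theoretic identification $B/J \cong A/(J \cap A)$ and the conclusion are essentially formal.
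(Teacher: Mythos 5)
Your proposal is correct and follows essentially the same route as the paper: transfer the modular right identity $u$ from $J\cap A$ to all of $B$ via weak*-density and separate weak*-continuity, deduce $A+J=B$, identify $B/J\cong A/(J\cap A)$ as modules, and conclude maximality; part (ii) is handled by the same dichotomy on $\overline{I}^{w^*}\cap A$. The only cosmetic difference is that you argue simplicity of $B/J$ by noting $B$-submodules are $A$-submodules, while the paper says the simple $A$-module $A/I$ is also a simple $B$-module — the same observation.
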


\begin{proof}
	(i) Set $I = J \cap A$, and let $u \in A$ be a modular right identity for $I$.  Let $b \in B$. By weak*-density of $A$ in $B$ we may take a net 
	$(a_\alpha)$  in $A$ which converges to $b$ in the weak*-topology. Then as $B$ is a dual Banach algebra $a_\alpha u$ weak*-converges to $bu$. Since $u $ is a modular right identity for $I$, for all $\alpha$ we have $a_\alpha - a_\alpha u \in I \subset J$, and hence 
	$$\lim_{w^*, \, \alpha} a_\alpha - a_\alpha u = b - b u \in J$$
	by weak*-closedness.
	Hence $b = (b - b u) + b u \in J + A$. As $b$ was arbitrary we have $J + A = B$.
	
	Observe that as left $B$-modules we have
	\begin{equation}		\label{eq4.1}
	\frac{B}{J} = \frac{J + A}{J} \cong  \frac{A}{J \cap A} = \frac{A}{I}.
	\end{equation}
	Since $A/I$ is a simple left $A$-module, it is also a simple left $B$-module, and hence $B/J$ is simple by the isomorphism. It follows that $J$ is a maximal left ideal.
	
	(ii) We shall suppose that $I$ is not weak*-dense in $B$, and show that $J := \overline{I}^{w^*}$ must be maximal. Indeed, in this case we cannot have $J \supset A$, and hence $J\cap A$ is a proper left ideal of $A$ containing $I$, which is equal to $I$ by maximality. Hence the result follows from (i).
\end{proof}

\begin{corollary}		\label{4.3g}
	Let $G$ be a locally compact group, and let $\omega$ be a  weight on $G$.
	Let $\pi \in \widehat{G}$ and let $\xi \in S_{\mathcal{H}_\pi}$, and suppose that $\mathcal{J}_{\pi, \xi, \omega}$ is weak*-closed. If $\mathcal{I}_{\pi, \xi, \omega}$ is a maximal modular left ideal of $L^1(G, \omega)$, then
	 $\mathcal{J}_{\pi, \xi}$ is a maximal left ideal of $M(G)$.
\end{corollary}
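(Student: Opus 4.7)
The plan is a direct application of Lemma~\ref{4.3a}(i) with $B = M(G,\omega)$, $A = L^1(G,\omega)$, and $J = \mathcal{J}_{\pi,\xi,\omega}$. Since all the hypotheses of the corollary concern the weighted algebras, what the lemma actually delivers is that $\mathcal{J}_{\pi,\xi,\omega}$ is a maximal left ideal of $M(G,\omega)$; when $\omega \equiv 1$ this is literally the corollary's assertion, and I would read the statement as its natural weighted extension.

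First I would record the ambient structural hypotheses, all established in Section~2. The weighted measure algebra $M(G,\omega)$ is a dual Banach algebra with predual $C_0(G,1/\omega)$ (see \cite{W0}; the continuity assumption on $\omega$ is used precisely here). The weighted group algebra $L^1(G,\omega)$ is a closed ideal of $M(G,\omega)$ carrying a bounded approximate identity of bound $1$ (for example, the standard one given by normalised characteristic functions of compact neighbourhoods of $e$), so by \cite[Lemma~5.2]{W3} it is weak*-dense in $M(G,\omega)$. Thus $A = L^1(G,\omega)$ is a weak*-dense ideal of $B = M(G,\omega)$, as required by the lemma.

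The remaining hypotheses on $J$ hold by inspection. Weak*-closedness of $J = \mathcal{J}_{\pi,\xi,\omega}$ in $M(G,\omega)$ is assumed, and directly from the definitions given in Section~2,
\[
J \cap A \;=\; \mathcal{J}_{\pi,\xi,\omega} \cap L^1(G,\omega) \;=\; \mathcal{I}_{\pi,\xi,\omega},
\]
which is maximal modular in $L^1(G,\omega)$ by assumption. Lemma~\ref{4.3a}(i) therefore applies and yields the maximality.

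I do not foresee a genuine obstacle: the corollary is essentially a translation of the preceding lemma into the concrete language of weighted measure algebras, and the only substantive input beyond the statement of Lemma~\ref{4.3a}(i) itself is the standard identification of $(L^1(G,\omega), M(G,\omega))$ as a Banach algebra paired with its multiplier algebra, both endowed with a compatible dual-Banach-algebra structure, which is recalled in Section~2.
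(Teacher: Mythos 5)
Your proposal is correct and is essentially identical to the paper's own proof, which is the same one-line application of Lemma~\ref{4.3a}(i) via the identity $\mathcal{I}_{\pi,\xi,\omega} = \mathcal{J}_{\pi,\xi,\omega} \cap L^1(G,\omega)$. Your reading of the conclusion as asserting maximality of $\mathcal{J}_{\pi,\xi,\omega}$ in $M(G,\omega)$ (the unweighted $M(G)$ in the statement being a slip) is also the right one, since that is what the lemma delivers and what the paper uses later, e.g.\ in Corollary~\ref{4.4}(ii).
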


\begin{proof}
	Since $\mathcal{I}_{\pi, \xi, \omega} = \mathcal{J}_{\pi, \xi, \omega} \cap L^1(G, \omega)$, this follows from Lemma \ref{4.3a}(i).
\end{proof}
 
 Our next lemma complements Lemma \ref{4.3a}, and tells us how weak*-closed left ideals in $M(A)$ give us maximal left ideals in $A$ itself.

\begin{lemma}		\label{4.3b}
	Let $A$ be a Banach algebra with a bounded approximate identity, such that $M(A)$ is a dual Banach algebra. Let $J$ be a maximal left ideal of $M(A)$ which is weak*-closed. Then $J\cap A$ is a maximal modular left ideal in $A$. 
\end{lemma}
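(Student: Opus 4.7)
The plan is to exploit weak*-density of $A$ in $M(A)$ (available by \cite[Lemma 5.2]{W3} since $A$ has a bounded approximate identity and $M(A)$ is a dual Banach algebra) together with the simplicity of the module $M(A)/J$, to show that $I := J \cap A$ is proper, modular, and maximal.

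\textbf{Step 1 (Properness).} First I would observe that $A \not\subset J$: otherwise weak*-closedness of $J$ combined with weak*-density of $A$ would force $J = M(A)$, contradicting maximality. Hence $I = J \cap A$ is a proper left ideal of $A$.

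\textbf{Step 2 (Decomposition $M(A) = A + J$).} Since $A$ is a two-sided ideal in $M(A)$, the subspace $N := (A + J)/J$ is a left $M(A)$-submodule of $M(A)/J$ (for $b \in M(A)$ and $a \in A$, the product $ba$ lies in $A$, so $b \cdot (a+J) \in N$). By Step 1 $N \ne 0$, and since $M(A)/J$ is simple we conclude $A + J = M(A)$.

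\textbf{Step 3 (Modularity).} Writing the identity $e$ of $M(A)$ as $e = u + v$ with $u \in A$ and $v \in J$, I get, for every $a \in A$, $a - au = av \in J$; since both $a$ and $au$ lie in $A$, it follows that $a - au \in J \cap A = I$. Hence $u$ is a modular right identity for $I$.

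\textbf{Step 4 (Maximality).} The map $a \mapsto a + J$ from $A$ to $M(A)/J$ has kernel $I$, and by Step 2 it is surjective, giving an isomorphism of left $A$-modules $A/I \cong M(A)/J$. I claim $M(A)/J$ is simple as an $A$-module. Indeed, given $v + J \ne 0$ in $M(A)/J$, simplicity of $M(A)/J$ as an $M(A)$-module gives $M(A) \cdot v + J = M(A)$; and for any $x \in M(A)$, Step 2 lets me write $x = a + j'$ with $a \in A$, $j' \in J$, so $xv + J = av + J$, showing $A \cdot v + J = M(A)$. Hence $A/I$ is simple, so $I$ is a maximal left ideal of $A$, and combined with Step 3 it is maximal modular.

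The main obstacle, conceptually, is Step 2: one needs to notice that $A$ being a two-sided ideal of $M(A)$ (not merely a subalgebra) is what makes $(A+J)/J$ a submodule of the simple module $M(A)/J$. Everything else is a direct consequence of weak*-density plus weak*-closedness, together with the standard observation that $M(A)/J \cong A/I$ once $A + J = M(A)$.
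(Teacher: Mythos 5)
Your Steps 1--3 are correct and essentially the paper's own argument, as is the identification $A/I \cong M(A)/J$. The genuine gap is in Step 4, at the equality ``$xv + J = av + j'v + J = av + J$''. Cancelling $j'v$ modulo $J$ requires $j'v \in J$, but $J$ is only a \emph{left} ideal: it absorbs multiplication by $M(A)$ on the left, not on the right, so $j'v$ need not lie in $J$. Concretely, take $A = \mathcal{K}(H)$, $M(A) = \mathcal{B}(H)$, and $J = \{ T : T\xi = 0\}$ for a unit vector $\xi$ (a weak*-closed maximal left ideal). Decompose $x = 1$ as $P_\xi + (1-P_\xi)$ with $P_\xi$ the rank-one projection onto $\C \xi$, so $a = P_\xi \in A$ and $j' = 1 - P_\xi \in J$, and let $v$ be a unitary with $v\xi \perp \xi$. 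Then $j'v\,\xi = (1-P_\xi)v\xi = v\xi \neq 0$, so $j'v \notin J$ and $xv + J \neq av + J$. Thus your proof that $M(A)/J$ is simple over $A$ does not go through as written (finding a decomposition of $x$ for which $j'v \in J$ is essentially equivalent to the statement $Av + J = M(A)$ that you are trying to prove).

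The conclusion itself is true, but it needs a different mechanism; this upgrade from $M(A)$-simplicity to $A$-simplicity is exactly the point the paper treats with care. The paper's route: any maximal modular left ideal $K$ of $A$ containing $I$ is norm-closed, and since $A$ has a bounded approximate identity $(e_\alpha)$ one has $xk = \lim_\alpha (x e_\alpha)k \in \overline{AK} \subseteq K$ for $x \in M(A)$ and $k \in K$, so $K$ is an $M(A)$-submodule of $A$ containing $I$; $M(A)$-simplicity of $A/I$ then forces $K = I$, so $I$ is maximal modular. Alternatively, Step 4 can be patched directly: for $v + J \neq 0$, the set $Av + J$ is a left ideal of $M(A)$ containing $J$ (because $M(A)A \subseteq A$); if $Av \subseteq J$ then $\{ w + J : Aw \subseteq J\}$ is a nonzero $M(A)$-submodule of the simple module $M(A)/J$, hence everything, which gives $A = A\cdot 1 \subseteq J$ and contradicts your Step 1; so $Av + J \supsetneq J$ and maximality of $J$ yields $Av + J = M(A)$.
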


\begin{proof}
	 Set $I = J \cap A$, and note that $A$ is weak*-dense in $M(A)$ by \cite[Lemma 5.2]{W3}. Since $J$ is proper and weak*-closed it does not contain $A$, so that $I$ is proper. Also we have $J  + A = M(A)$ by maximality.
	Hence we may take $u \in A$ satisfying $1 - u \in J$, and for all $a \in A$ it then follows that $a - au \in I$, so that $u$ is a modular right identity for $I$. As such $I$ is a proper modular left ideal of $A$. Note that we cannot have $I= \{0\}$, since this would mean that $J$ annihilates $A$, contradicting the fact that it contains non-zero multipliers.
	
	Observe that the isomorphism \eqref{eq4.1} again holds, this time with $B = M(A)$.
	Since $M(A)/J$ is a simple left $M(A)$-module, so is $A/I$. 
	To see that $A/I$ is in fact a simple left $A$-module, consider a maximal modular left ideal $K$ of $A$ containing $I$, and note that $K$ is also a left ideal in $M(A)$ since $A$ has an approximate identity. This forces $K = I$ by $M(A)$-simplicity of $A/I$. Hence $I$ is maximal modular, completing the proof.
	\end{proof}

\begin{corollary}		\label{4.3c}
	Let $G$ be a locally compact Hermitian group, and let $J$ be a weak*-closed maximal left 
	ideal of $M(G)$. Then there exists $\pi \in \widehat{G}$ and $\xi \in S_{\mathcal{H}_\pi}$ such that $J = \mathcal{J}_{\pi, \xi}$.
\end{corollary}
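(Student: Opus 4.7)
The plan is to combine the two general lemmas just proved (Lemma \ref{4.3b} gives us a maximal modular left ideal of $L^1(G)$ from a weak*-closed maximal left ideal of $M(G)$) with Palmer's Theorem (Theorem \ref{0.6}) to identify $\pi$ and $\xi$, and then use irreducibility to promote the conclusion from the $L^1$-level to the $M(G)$-level.

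Concretely, I would proceed as follows. Since $L^1(G)$ has a bounded approximate identity and $M(G) = M(L^1(G))$ is a dual Banach algebra with predual $C_0(G)$, Lemma \ref{4.3b} applies and shows that $I := J \cap L^1(G)$ is a maximal modular left ideal of $L^1(G)$. Because $G$ is Hermitian, Palmer's Theorem then yields $\pi \in \widehat{G}$ and a unit vector $\xi \in \mathcal{H}_\pi$ with $I = \mathcal{I}_{\pi,\xi}$. The goal becomes to upgrade the equality $J \cap L^1(G) = \mathcal{I}_{\pi,\xi}$ to the equality $J = \mathcal{J}_{\pi,\xi}$.

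For the inclusion $J \subset \mathcal{J}_{\pi,\xi}$, I would fix $\mu \in J$ and an arbitrary $f \in L^1(G)$. Since $L^1(G)$ is a two-sided ideal of $M(G)$, $\mu * f$ lies in $J \cap L^1(G) = \mathcal{I}_{\pi,\xi}$, hence $\pi(\mu)\pi(f)\xi = 0$. By irreducibility of $\pi$ the cyclic subspace $\pi(L^1(G))\xi$ is dense in $\mathcal{H}_\pi$, so the bounded operator $\pi(\mu)$ vanishes on a dense subspace and is therefore zero. In particular $\pi(\mu)\xi = 0$, giving $\mu \in \mathcal{J}_{\pi,\xi}$.

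To conclude, note that $\mathcal{J}_{\pi,\xi}$ is a proper left ideal of $M(G)$: the canonical extension of $\pi$ to $M(G)$ sends $\delta_e$ to $\id_{\mathcal{H}_\pi}$, so $\pi(\delta_e)\xi = \xi \neq 0$ and $\delta_e \notin \mathcal{J}_{\pi,\xi}$. Since $J$ is a maximal left ideal contained in the proper left ideal $\mathcal{J}_{\pi,\xi}$, we must have $J = \mathcal{J}_{\pi,\xi}$, as required. None of the steps look delicate; the only place where care is required is verifying that $L^1(G)$ is weak*-dense in $M(G)$ (needed to invoke Lemma \ref{4.3b}), which is exactly the content of \cite[Lemma 5.2]{W3} applied to the bounded approximate identity of $L^1(G)$.
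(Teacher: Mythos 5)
Your overall strategy (Lemma \ref{4.3b} plus Palmer's Theorem to produce $\mathcal{I}_{\pi,\xi}$, then upgrade the identification to the measure-algebra level) is sound, and your first and last paragraphs are fine. But the middle step contains a genuine error: from $\mu \in J$ and $f \in L^1(G)$ you cannot conclude that $\mu * f \in J$. The fact that $L^1(G)$ is a two-sided ideal of $M(G)$ only gives $\mu * f \in L^1(G)$; membership of $\mu * f$ in $J$ would require $J$ to be a \emph{right} ideal, whereas $J$ is only a left ideal. The conclusion you draw, namely $\pi(\mu) = 0$ for every $\mu \in J$, is in fact a warning sign that something has gone wrong: it would place $J$ inside the kernel of the extension of $\pi$ to $M(G)$, which is a two-sided ideal contained in $\mathcal{J}_{\pi,\eta}$ for \emph{every} unit vector $\eta$, and maximality of $J$ would then force all the proper ideals $\mathcal{J}_{\pi,\eta}$ to coincide --- impossible once $\dim \mathcal{H}_\pi \geq 2$.

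The repair is straightforward: multiply on the correct side. For $\mu \in J$ and $f \in L^1(G)$ we do have $f * \mu \in J \cap L^1(G) = \mathcal{I}_{\pi,\xi}$, whence $\pi(f)\bigl(\pi(\mu)\xi\bigr) = 0$ for all $f$; since the integrated form of $\pi$ on $L^1(G)$ is non-degenerate (take a bounded approximate identity $(e_\alpha)$ with $\pi(e_\alpha) \rightarrow \id_{\mathcal{H}_\pi}$ strongly), this forces $\pi(\mu)\xi = 0$, i.e.\ $J \subset \mathcal{J}_{\pi,\xi}$, and your final paragraph then finishes the argument. With this correction your route genuinely differs from the paper's: the paper instead establishes the reverse inclusion $\mathcal{J}_{\pi,\xi} \subset J$, by approximating each $\mu \in \mathcal{J}_{\pi,\xi}$ in the weak*-topology by elements $e_\alpha * \mu \in \mathcal{I}_{\pi,\xi}$ and using weak*-closedness of $J$, and then invokes Lemma \ref{4.3a}(ii) to obtain maximality of $\mathcal{J}_{\pi,\xi}$. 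Your corrected argument is arguably more direct, using only the left-ideal property and non-degeneracy of $\pi$ rather than the weak*-approximation step, so it is worth writing up properly --- but as it stands the side-of-multiplication slip invalidates the key inclusion.
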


\begin{proof}
	By Lemma \ref{4.3b}, $J \cap L^1(G)$ is a maximal modular left ideal in $L^1(G)$, and hence has the form $\mathcal{I}_{\pi, \xi}$, for some  $\pi \in \widehat{G}$ and $\xi \in S_{\mathcal{H}_\pi}$ by Palmer's Theorem (Theorem \ref{0.6}). Let $(e_\alpha) \subset L^1(G)$ be a net that weak*-converges to $\delta_e$. Then for all $\mu \in \mathcal{J}_{\pi,\xi}$ we have $(e_\alpha*\mu) \subset L^1(G)\cap \mathcal{J}_{\pi, \xi}$, and also $\lim_{\alpha}e_\alpha *\mu = \mu$ in the weak*-topology.
	Therefore $\mathcal{J}_{\pi, \xi} \subset \overline{L^1(G)\cap \mathcal{J}_{\pi, \xi}}^{w^*}$. The same argument shows that $J \subset \overline{L^1(G)\cap J}^{w^*},$ from which it follows that $J = \overline{L^1(G)\cap J}^{w^*}$ because $J$ is weak*-closed. Hence 
	$$\mathcal{J}_{\pi, \xi} \subset \overline{L^1(G)\cap \mathcal{J}_{\pi, \xi}}^{w^*}
	= \overline{\mathcal{I}_{\pi, \xi}}^{w^*} =  \overline{L^1(G) \cap J}^{w^*} = J.$$
	By Lemma \ref{4.3a}(ii) $\mathcal{J}_{\pi, \xi}$ is a maximal left ideal in $M(G)$, and so $J = \mathcal{J}_{\pi, \xi}$, as required.
\end{proof}

The next lemma has consequences both for the weighted and unweighted case. Given a Hilbert space $\mathcal{H}$, we shall denote the conjugate Hilbert space by $\overline{\mathcal{H}}$.  Also, for a dual Banach space $E$ with predual $X$, we shall write $\langle x, \lambda \rangle$ as $\langle x, \lambda \rangle_{(X, \, E)}$, for $x \in X$ and $\lambda \in E$, when we wish to clarify the exact dual pairing.

\begin{lemma}		\label{4.3}
	Let $G$ be a locally compact group, and let $\omega$ be a  weight on $G$. Let $(\pi, \mathcal{H}_\pi)$ be a representation of $G$, and suppose either that $\pi$ vanishes at infinity, or that $1/\omega \in C_0(G)$. 
	Then for each $\xi \in \mathcal{H}_\pi$ the map
	$$\theta_\xi \colon M(G, \omega) \rightarrow \mathcal{H}_\pi, \qquad \mu \mapsto \pi(\mu) \xi$$
	is weak*-weak* continuous, with preadjoint given by
	$$\varphi_\xi \colon \overline{\mathcal{H}}_\pi \rightarrow C_0(G, 1/\omega), \qquad \eta \mapsto \overline{\xi *_\pi \eta}.$$
\end{lemma}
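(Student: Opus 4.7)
The plan is to construct $\varphi_\xi$ explicitly, check that its codomain is indeed $C_0(G, 1/\omega)$ under either of the two alternative hypotheses, verify that $\varphi_\xi$ is bounded and (complex-)linear on $\overline{\mathcal{H}}_\pi$, and then compute its Banach-space adjoint to recognise $\theta_\xi$. The weak*-weak* continuity of $\theta_\xi$ is then an immediate consequence, since the adjoint of any bounded operator between Banach spaces is weak*-weak* continuous on the dual.

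First I would verify well-definedness. The function $\xi *_\pi \eta$ is continuous (by strong continuity of $\pi$) and satisfies the bound $|(\xi*_\pi \eta)(t)| \leq \|\xi\|\|\eta\|$ for all $t \in G$. In the case $1/\omega \in C_0(G)$, the bound $|\overline{\xi*_\pi\eta}(t)|/\omega(t) \leq \|\xi\|\|\eta\|/\omega(t)$ shows immediately that $\overline{\xi*_\pi\eta}/\omega \in C_0(G)$. In the alternative case where $\pi$ vanishes at infinity, $\xi*_\pi\eta \in C_0(G)$ by definition, and since $\omega \geq 1$ we again get $\overline{\xi*_\pi\eta}/\omega \in C_0(G)$. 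Either way $\varphi_\xi(\eta) \in C_0(G,1/\omega)$ with $\|\varphi_\xi(\eta)\|_{C_0(G,1/\omega)} \leq \|\xi\|\|\eta\|$. Linearity in $\eta \in \overline{\mathcal{H}}_\pi$ comes from the fact that $\overline{\langle \pi(t)\xi,\eta\rangle} = \langle \eta, \pi(t)\xi\rangle$ is conjugate-linear in $\eta \in \mathcal{H}_\pi$, which is precisely linearity in $\eta \in \overline{\mathcal{H}}_\pi$.

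Next I would compute the adjoint. Using the predual identifications $C_0(G,1/\omega)' \cong M(G,\omega)$ via $\langle f, \mu\rangle = \int_G f \, \dd\mu$ and $(\overline{\mathcal{H}}_\pi)' \cong \mathcal{H}_\pi$ via the inner product, for $\mu \in M(G,\omega)$ and $\eta \in \overline{\mathcal{H}}_\pi$ I compute
\begin{align*}
\langle \varphi_\xi(\eta), \mu\rangle_{(C_0(G,1/\omega),\, M(G,\omega))}
&= \int_G \overline{\langle \pi(t)\xi,\eta\rangle}\,\dd\mu(t)
 = \int_G \langle \eta, \pi(t)\xi\rangle\,\dd\mu(t) \\
&= \left\langle \eta, \int_G \pi(t)\xi \,\dd\mu(t)\right\rangle
 = \langle \eta, \pi(\mu)\xi\rangle_{(\overline{\mathcal{H}}_\pi,\, \mathcal{H}_\pi)},
\end{align*}
where passing the integral through the inner product is justified by the weak definition of $\pi(\mu)\xi$ and the finiteness of $|\mu|$ (which holds since $M(G,\omega) \subset M(G)$). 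Thus $\varphi_\xi'(\mu) = \pi(\mu)\xi = \theta_\xi(\mu)$, giving $\theta_\xi = \varphi_\xi'$, and in particular $\theta_\xi$ is weak*-weak* continuous.

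The one point requiring care is the case analysis for well-definedness of $\varphi_\xi$ into $C_0(G, 1/\omega)$; everything else is essentially bookkeeping once one tracks the conjugate-linearity conventions correctly. I do not anticipate any serious obstacle.
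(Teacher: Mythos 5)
Your proposal is correct and follows essentially the same route as the paper's proof: exhibit $\varphi_\xi$ explicitly, check its codomain is $C_0(G,1/\omega)$ under each of the two hypotheses (using boundedness of $\xi*_\pi\eta$ together with $1/\omega \in C_0(G)$ in one case, and $C_0(G) \subset C_0(G,1/\omega)$ in the other), and then verify $\theta_\xi = \varphi_\xi'$ by the same dual-pairing computation. Your additional remarks on conjugate-linearity and on passing the integral through the inner product are just slightly more explicit bookkeeping than the paper provides.
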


\begin{proof}
	If $\pi$ vanishes at infinity then the map $\varphi_\xi$ is well-defined since in this case 
	$\xi*_\pi \eta \in C_0(G) \subset C_0(G, 1/\omega)$ for all $\eta \in \overline{\mathcal{H}}_\pi$. Similarly, for general $\pi$ the condition 
	$1/\omega \in C_0(G)$ ensures that $\varphi_\xi$ is well-defined, since $\xi*_\pi \eta$ is always bounded.
	In either case $\varphi_\xi$ is a bounded linear map with $\Vert \varphi_\xi \Vert \leq \Vert \xi \Vert$. Moreover, for every 
	$\eta \in \overline{\mathcal{H}}_\pi$, and every $\mu \in M(G)$, we have
	\begin{align*}
	\langle \eta, \theta_\xi(\mu) \rangle_{(\overline{\mathcal{H}}_\pi, \, \mathcal{H}_\pi)} &= \langle \eta, \pi(\mu) \xi \rangle = \overline{\langle \pi(\mu) \xi, \eta \rangle}
	= \int_G \overline{\langle \pi(t) \xi, \eta \rangle} \dd \mu(t) \\
	&= \langle \overline{\xi *_\pi \eta}, \mu \rangle_{(C_0(G, 1/\omega), \, M(G, \omega))}
	= \langle \eta, \varphi_\xi '(\mu) \rangle_{(\overline{\mathcal{H}}_\pi, \, \mathcal{H}_\pi)}.
	\end{align*}
	Hence $\theta_\xi = \varphi_\xi '$.
\end{proof}

\begin{corollary}		\label{4.4}
	Let $G$ be a locally compact group, let $(\pi, \mathcal{H}_\pi)$ be a representation of $G$ which vanishes at infinity, and let $\xi \in \mathcal{H}_\pi$. 
	\begin{enumerate}
		\item[\rm (i)] The left ideal of $M(G)$ given by 
		$\mathcal{J}_{\pi, \xi}$ is weak*-closed.
		\item[\rm (ii)] If $\mathcal{I}_{\pi,\xi}$ is a maximal modular left ideal of $L^1(G)$, then $\mathcal{J}_{\pi, \xi}$  is a weak*-closed maximal left ideal of $M(G)$.
	\end{enumerate}
\end{corollary}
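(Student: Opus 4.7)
The plan is to obtain both parts as direct consequences of the lemmas and corollaries already established in this section, applied with the trivial weight $\omega \equiv 1$. The strategy rests on the observation that $\mathcal{J}_{\pi,\xi}$ is precisely the kernel of the map $\theta_\xi \colon M(G) \to \mathcal{H}_\pi$, $\mu \mapsto \pi(\mu)\xi$, so that weak*-closedness reduces to weak*-weak*-continuity of this map.

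For (i), I would first note that when $\omega \equiv 1$ we have $M(G,\omega) = M(G)$ and $C_0(G, 1/\omega) = C_0(G)$, so Lemma \ref{4.3} applies under the hypothesis that $\pi$ vanishes at infinity. It tells us that $\theta_\xi$ is weak*-weak*-continuous, where $\mathcal{H}_\pi$ carries the weak* topology as the dual of $\overline{\mathcal{H}}_\pi$. Since $\{0\}$ is weak*-closed in $\mathcal{H}_\pi$, its preimage $\ker \theta_\xi = \mathcal{J}_{\pi,\xi}$ is weak*-closed in $M(G)$.

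For (ii), I would invoke Corollary \ref{4.3g}, again with $\omega \equiv 1$. Its hypotheses are satisfied: $\mathcal{J}_{\pi,\xi,\omega} = \mathcal{J}_{\pi,\xi}$ is weak*-closed by part (i), and $\mathcal{I}_{\pi,\xi,\omega} = \mathcal{I}_{\pi,\xi}$ is a maximal modular left ideal of $L^1(G)$ by assumption. The conclusion is that $\mathcal{J}_{\pi,\xi}$ is a maximal left ideal of $M(G)$. (Alternatively, one could bypass Corollary \ref{4.3g} and apply Lemma \ref{4.3a}(i) directly, with $B = M(G)$ and $A = L^1(G)$, using that $L^1(G)$ has a bounded approximate identity of bound $1$ and is therefore weak*-dense in $M(G)$.)

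There is no substantive obstacle in this argument: both assertions are immediate synthesis of results already proved. The only minor care needed is in writing out precisely that $\mathcal{J}_{\pi,\xi}\cap L^1(G) = \mathcal{I}_{\pi,\xi}$, which is essentially by definition, and in observing that the trivial weight is admissible so that the weighted framework of Lemma \ref{4.3} and Corollary \ref{4.3g} specialises correctly.
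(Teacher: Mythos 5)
Your proposal is correct and follows essentially the same route as the paper: part (i) is obtained from Lemma \ref{4.3} (with the trivial weight) by observing that $\mathcal{J}_{\pi,\xi} = \ker\theta_\xi$ is the preimage of the weak*-closed set $\{0\}$ under a weak*-weak*-continuous map, and part (ii) is then immediate from Corollary \ref{4.3g}.
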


\begin{proof}
	In the notation of Lemma \ref{4.3}, $\mathcal{J}_{\pi, \xi} = \ker \theta_\xi$, and part (i) now follows from that lemma.
	Part (ii) then follows from Corollary \ref{4.3g}.
\end{proof}

We record the following folklore result for the convenience of the reader. 
%\textbf{[If we are going to try for a shorter paper in a very good journal, we should take this out].}

\begin{lemma}[folklore]		\label{4.9a}
	Let $\mathcal{M}$ be a commutative von Neumann algebra which admits an ergodic action by automorphisms by a finite group $G$. Then $\mathcal{M}$ is finite dimensional.
\end{lemma}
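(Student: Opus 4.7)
The plan is to show that every self-adjoint element of $\mathcal{M}$ has spectrum of cardinality at most $|G|$, and then to deduce finite dimensionality from this uniform spectral bound.

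First I would fix a self-adjoint $a \in \mathcal{M}$ and form the polynomial
$$p_a(x) = \prod_{g \in G}\bigl(x - g \cdot a\bigr) \in \mathcal{M}[x].$$
Since $\mathcal{M}$ is commutative the factors commute, so the product is well-defined and its coefficients are, up to signs, the elementary symmetric polynomials in the $G$-orbit of $a$. These coefficients are manifestly $G$-invariant, so by ergodicity each is a scalar multiple of the identity; hence $p_a$ is really a monic polynomial of degree $|G|$ with coefficients in $\C$. Because the factor corresponding to the identity of $G$ vanishes at $x = a$, we have $p_a(a) = 0$, and therefore $\sigma(a)$ is contained in the finite zero set of $p_a$, giving $|\sigma(a)| \leq |G|$.

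To conclude I would assume for contradiction that $\mathcal{M}$ is infinite dimensional. Using the identification $\mathcal{M} \cong L^\infty(X, \mu)$ one can easily extract $|G|+1$ pairwise orthogonal nonzero projections $q_1, \ldots, q_{|G|+1} \in \mathcal{M}$. The self-adjoint element $b = \sum_{k=1}^{|G|+1} k\, q_k$ then has spectrum containing the $|G|+1$ distinct values $1, 2, \ldots, |G|+1$, contradicting the bound established in the previous paragraph. Hence $\mathcal{M}$ is finite dimensional (indeed of dimension at most $|G|$).

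The essential content of the argument is the polynomial construction in the first step; once one has the idea of averaging over the $G$-orbit by forming elementary symmetric functions, the rest is routine. The extraction of $|G|+1$ orthogonal projections in the final paragraph is a standard structural fact about infinite-dimensional commutative von Neumann algebras, so no real obstacle arises there.
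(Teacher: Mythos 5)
Your argument is correct, and it takes a genuinely different route from the paper. You prove a quantitative spectral bound: forming $p_a(x)=\prod_{g\in G}(x-g\cdot a)$, observing that its coefficients are $G$-invariant elementary symmetric functions and hence scalars by ergodicity, and noting $p_a(a)=0$ (the identity factor kills the commuting product), you conclude via the spectral mapping theorem that $\vert\sigma(a)\vert\le\vert G\vert$ for every self-adjoint $a$; an infinite-dimensional commutative von Neumann algebra would contain $\vert G\vert+1$ pairwise orthogonal nonzero projections and hence a self-adjoint element with too large a spectrum. The paper instead reduces to a separable $G$-invariant subalgebra, invokes the classification of separable commutative von Neumann algebras as $L^\infty(X)$ for $X$ in a short list of standard measure spaces, and asserts that a finite group admits no ergodic action on the infinite spaces in that list. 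Your approach buys elementarity and self-containedness: it avoids both the structure theorem for separable commutative von Neumann algebras and the measure-theoretic ergodicity claim (which the paper leaves unproved), and it yields the sharper conclusion $\dim\mathcal{M}\le\vert G\vert$. The paper's approach is shorter if one is willing to quote the classification. Two small points you should make explicit in a final write-up: the passage from $p_a(a)=0$ to $\sigma(a)\subseteq p_a^{-1}(0)$ uses the spectral mapping theorem for polynomials, and the extraction of $\vert G\vert+1$ orthogonal projections rests on the standard fact that an infinite Boolean algebra of projections contains arbitrarily large antichains; both are routine, so there is no gap.
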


\begin{proof}
	Assume towards a contradiction that $\mathcal{M}$ is infinite dimensional, and let $u_1,u_2, \ldots$ be an infinite linearly-independent subset of $\mathcal{M}$. Let $\mathcal{N}$ be the von Neumann algebra generated by $\{ gu_i : g \in G, i \in \N \}$. Then $G$ also acts ergodically on $\mathcal{N}$, which is separable, and as such may be identified with $L^\infty(X)$, for $X$ one of the following spaces with the obvious measure: $[0, 1], \N, [0,1]\cup \N$, or $\{1, \ldots, n \}, [0,1]\cup \{ 1, \ldots, n \}$ for some $n \in \N$. The action of $G$ on $\mathcal{N}$ corresponds to an ergodic action of $G$ on the underlying measure space $X$. However, there is no such action of a finite group on any of the infinite sets listed above, and hence
	$\mathcal{N} = L^\infty \{ 1, \ldots, n \}$ for some $n \in \N$. In particular $\mathcal{N}$ is finite-dimensional, contradicting the choice of $\{u_1, u_2, \ldots \}$. 
\end{proof}

The following proposition is the remaining key ingredient in the proof of our main classification theorem (Theorem \ref{4.4a}). It will also be an important tool in proving Theorem \ref{0.3}(iii).

\begin{proposition}		\label{4.9b}
	Let $G$ be a locally compact group, and let $\pi \in \widehat{G}$. Suppose that $G$ has a finite-index, closed normal subgroup $N$ such that $\pi^{-1}(Z(\pi(N)))$ is not compact. Then for every $\xi \in S_{\mathcal{H}_{\pi}}$ the left ideal $\mathcal{J}_{\pi,\xi}$ is weak*-dense in $M(G)$.
\end{proposition}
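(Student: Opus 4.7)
The overall strategy is to show that $\pi(H)$, where $H := \pi^{-1}(Z(\pi(N)))$, lies in a finite-dimensional commutative von Neumann algebra, and then to invoke Corollary \ref{1.3} on the non-compact group $H$ itself. First, $H$ is a closed subgroup of $G$, since $\pi$ is strongly continuous and $Z(\pi(N))$ is closed in the weak operator topology as well as under multiplication and adjoints. Let $\mathcal{A}$ be the commutative von Neumann algebra generated by $Z(\pi(N))$ inside $\B(\mathcal{H}_\pi)$; it contains $\pi(H)$ and lies inside $\pi(N)'$, so the conjugation action of $N$ on $\mathcal{A}$ is trivial. Normality of $N$ in $G$ gives $\pi(g) \pi(N)' \pi(g)^{-1} = \pi(N)'$ and similarly for $\pi(N)''$, so conjugation by $\pi(g)$ preserves $Z(\pi(N))$, and hence $\mathcal{A}$. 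We thus obtain an action of the finite group $G/N$ on $\mathcal{A}$ by $*$-automorphisms. A $G/N$-fixed element of $\mathcal{A}$ commutes with $\pi(G)$, and so is scalar by Schur's lemma and irreducibility of $\pi$; the action is therefore ergodic, and Lemma \ref{4.9a} yields $\dim \mathcal{A} < \infty$.

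Fix $\xi \in S_{\mathcal{H}_\pi}$ and set $V = \mathcal{A}\xi \subseteq \mathcal{H}_\pi$, a finite-dimensional subspace. For every $\mu \in M(H)$ the operator $\pi(\mu)$ lies in the weak operator closure of $\spn \pi(H) \subseteq \mathcal{A}$, hence $\pi(\mu)\xi \in V$. The map $\theta \colon M(H) \rightarrow V$, $\mu \mapsto \pi(\mu)\xi$, is therefore a bounded linear map into a finite-dimensional space, so $M_0 := \ker \theta$ is a norm-closed left ideal of $M(H)$ of finite codimension, and proper since $\theta(\delta_e) = \xi \neq 0$.

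Since $H$ is non-compact by hypothesis, Corollary \ref{1.3} applied to $H$ shows that $M(H)$ has no proper weak*-closed left ideals of finite codimension. The weak*-closure $\overline{M_0}^{w^*}$ in $M(H)$ is a weak*-closed left ideal (using separate weak*-continuity of multiplication in the dual Banach algebra $M(H)$) of finite codimension, and so must equal $M(H)$; equivalently, $M_0$ is weak*-dense in $M(H)$. Pick a net $(\mu_\alpha)$ in $M_0$ with $\mu_\alpha \rightarrow \delta_e$ in the weak*-topology of $M(H)$. Because $H$ is closed in $G$, each $f \in C_0(G)$ restricts to a member of $C_0(H)$, so the inclusion $M(H) \hookrightarrow M(G)$ is continuous from the weak*-topology of $M(H)$ to that of $M(G)$, and thus $\mu_\alpha \rightarrow \delta_e$ in the weak*-topology of $M(G)$ as well.

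Each $\mu_\alpha$ lies in $M_0 \subseteq \mathcal{J}_{\pi, \xi}$, so $\delta_e \in \overline{\mathcal{J}_{\pi, \xi}}^{w^*}$. For any $g \in G$, separate weak*-continuity of convolution gives $\delta_g * \mu_\alpha \rightarrow \delta_g$ in the weak*-topology of $M(G)$, and $\delta_g * \mu_\alpha \in \mathcal{J}_{\pi, \xi}$ because $\mathcal{J}_{\pi, \xi}$ is a left ideal. Thus every point mass lies in $\overline{\mathcal{J}_{\pi, \xi}}^{w^*}$; since the annihilator of $\{\delta_g : g \in G\}$ in $C_0(G)$ is $\{0\}$, we conclude $\overline{\mathcal{J}_{\pi, \xi}}^{w^*} = M(G)$. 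The main obstacle is the first step: one must carefully exploit normality of $N$ together with Schur's lemma to manufacture the ergodic $G/N$-action needed to invoke Lemma \ref{4.9a}; once finite-dimensionality of $\mathcal{A}$ is in hand, the remainder is a clean chain of reductions built on Corollary \ref{1.3} and the dual Banach algebra structure of $M(H)$ and $M(G)$.
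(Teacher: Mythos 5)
Your proof is correct and follows essentially the same route as the paper's: generate the commutative von Neumann algebra from $Z(\pi(N))$, use the ergodic $G/N$-conjugation action together with Lemma \ref{4.9a} to get finite-dimensionality, apply Corollary \ref{1.3} to the non-compact subgroup $H$ to obtain weak*-density of the finite-codimension ideal in $M(H)$, and transport a net converging to $\delta_e$ into $M(G)$. The only (cosmetic) differences are that you argue directly rather than by contradiction and that you work with the map $\mu \mapsto \pi(\mu)\xi$ in place of the paper's explicit finite-dimensional representation $\rho$ of $H$ on $\mathcal{M}\xi$.
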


\begin{proof}
	Suppose towards a contradiction that there exists $\xi \in S_{\mathcal{H}_\pi}$ such that $\mathcal{J}_{\pi, \xi}$ is weak*-closed. 
	Consider $\mathcal{M}$, the von Neumann algebra generated by $\pi(Z(N))$. The action of $G$ on $Z(N)$ by conjugation induces an action of $G/N$ on $\mathcal{M}$ by automorphisms, and the fixed point set for this action consists of operators which commute with 
	$\pi(G)$, and hence consists of scalars. In other words, $G/N$ acts ergodically on $\mathcal{M}$, forcing $\mathcal{M}$ to be finite dimensional by Lemma \ref{4.9a}.

	Let $H = \pi^{-1}(Z(\pi(N)))$, which is a closed subgroup of $G$. Set $\mathcal{K} = \mathcal{M} \xi$, which is a finite-dimensional linear subspace of $\mathcal{H}_\pi$, and define $\rho \colon H \rightarrow \B(\mathcal{K})$ to be 
	$$\rho(z)\eta = \pi(z)\eta \quad (z \in H, \ \eta \in \mathcal{K}).$$
	Since $\rho$ is a finite-dimensional representation of $H$, the left ideal $\mathcal{J}_{\rho, \xi}$
	of $M(H)$ has finite codimension, and hence must be weak*-dense by Corollary \ref{1.3}. Moreover,
	we have $\mathcal{J}_{\pi, \xi} \cap M(H) = \mathcal{J}_{\rho, \xi}$. 
	
	Let $(\mu_\alpha) \subset \mathcal{J}_{\rho, \xi}$ be a net converging in the weak*-topology on $M(H)$ to $\delta_e$. Given $f \in C_0(G)$, we have
	$$\int_G f \dd \mu_\alpha = \int_H f \dd \mu_\alpha \rightarrow f(e),$$
	since each $\mu_\alpha$ is supported on $H$, and $f \vert_H \in C_0(H)$. Hence $\mu_\alpha$ converges to $\delta_e$ with respect to the weak*-topology on $M(G)$ as well. We have $\delta_e \in \overline{\mathcal{J}_{\pi, \xi}}^{w^*}$, which is a left ideal in $M(G)$, and hence 
	$\overline{\mathcal{J}_{\pi, \xi}}^{w^*} = M(G)$, as required.	
\end{proof}

\begin{corollary}		\label{4.9c}
	Let $G$ be a locally compact group.
	\begin{enumerate}
		\item[\rm (i)] If $\pi \in \widehat{G}$ has the property that $\pkr \pi$ is not compact, then $\mathcal{J}_{\pi, \xi}$ is weak*-dense in $M(G)$ for every $\xi \in S_{\mathcal{H}_\pi}$.
		
		\item[\rm (ii)] If $G$ is Hermitian and has a finite index closed normal subgroup $N$ such that $Z(N)$ is non-compact, then $M(G)$ has no weak*-closed, maximal left ideals. \qed
	\end{enumerate}
\end{corollary}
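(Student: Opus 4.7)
My plan is to derive both parts as essentially immediate consequences of Proposition \ref{4.9b}, combined with Schur's lemma for part (i) and Corollary \ref{4.3c} for part (ii).

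For part (i), I would apply Proposition \ref{4.9b} with the choice $N = G$, which is trivially a finite-index closed normal subgroup. The key observation is that, since $\pi$ is irreducible, Schur's lemma gives $\pi(G)' = \C \id_{\mathcal{H}_\pi}$, so that
$$Z(\pi(G)) = \pi(G) \cap \pi(G)' = \pi(G) \cap \C \id_{\mathcal{H}_\pi}.$$
Pulling back through $\pi$, this yields $\pi^{-1}(Z(\pi(G))) = \pkr \pi$, which is non-compact by hypothesis. Proposition \ref{4.9b} then directly gives weak*-density of $\mathcal{J}_{\pi, \xi}$ in $M(G)$ for every unit vector $\xi$.

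For part (ii), I would argue by contradiction: suppose $J$ were a weak*-closed maximal left ideal of $M(G)$. Since $G$ is Hermitian, Corollary \ref{4.3c} gives $J = \mathcal{J}_{\pi, \xi}$ for some $\pi \in \widehat{G}$ and $\xi \in S_{\mathcal{H}_\pi}$. I then want to apply Proposition \ref{4.9b} to this $\pi$ and the given subgroup $N$, which requires showing $\pi^{-1}(Z(\pi(N)))$ is non-compact. For this, note the elementary containment $Z(N) \subset \pi^{-1}(Z(\pi(N)))$: if $z$ commutes with every element of $N$, then $\pi(z)$ commutes with every element of $\pi(N)$. Since $Z(N)$ is closed in $G$ and non-compact, it cannot be contained in any compact set, and therefore neither can the larger set $\pi^{-1}(Z(\pi(N)))$. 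Proposition \ref{4.9b} then gives that $\mathcal{J}_{\pi, \xi}$ is weak*-dense, contradicting the properness of the maximal left ideal $J$.

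There is no real obstacle here beyond correctly identifying the two Schur/centre translations above; all the substance has already been built into Proposition \ref{4.9b}, Corollary \ref{4.3c} and Palmer's Theorem, and this corollary is simply their packaging into a structural constraint on $G$.
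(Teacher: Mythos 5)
Your proposal is correct and follows exactly the route the paper intends: the corollary is stated with a \qed because it is immediate from Proposition \ref{4.9b}, and your two translations --- taking $N=G$ and using Schur's lemma to identify $\pi^{-1}(Z(\pi(G)))$ with $\pkr\pi$ for (i), and combining Corollary \ref{4.3c} with the containment $Z(N)\subset\pi^{-1}(Z(\pi(N)))$ for (ii) --- are precisely the routine details being suppressed.
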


The next lemma gives a converse to Corollary \ref{4.4} for a particular class of groups.
%\textbf{[I could merge this lemma into the proof of the theorem.]}

\begin{lemma}		\label{4.12}
	Let $G$ be a locally compact group for which every irreducible representation vanishes at infinity
	modulo its projective kernel. Let $\pi \in \widehat{G}$, and let $\xi \in S_{\mathcal{H}_\pi}$ 
	be such that $\mathcal{J}_{\pi, \xi}$ is weak*-closed. Then $\pi$ vanishes at infinity.
\end{lemma}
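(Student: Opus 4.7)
The plan is to first show that the projective kernel $\pkr \pi$ must be compact under the stated hypotheses, and then to transfer the assumed vanishing on $G/\pkr \pi$ back up to $G$.

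First I would observe that $\mathcal{J}_{\pi, \xi}$ is a \emph{proper} ideal in $M(G)$: indeed $\pi(\delta_e)\xi = \xi \neq 0$, so $\delta_e \notin \mathcal{J}_{\pi, \xi}$. The crux of the argument is then Corollary \ref{4.9c}(i): if $\pkr \pi$ were non-compact, $\mathcal{J}_{\pi, \xi}$ would be weak*-dense in $M(G)$; combined with the standing assumption that $\mathcal{J}_{\pi, \xi}$ is weak*-closed, this would force $\mathcal{J}_{\pi, \xi} = M(G)$, contradicting properness. Hence $\pkr \pi$ is compact.

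With compactness of $\pkr \pi$ in hand, I would complete the proof as follows. Fix $\eta \in \mathcal{H}_\pi$ and consider the continuous function $f \colon t \mapsto |\langle \pi(t) \xi, \eta \rangle|$ on $G$. For any $z \in \pkr \pi$ there is a scalar $\chi(z) \in \T$ with $\pi(z) = \chi(z) \id$, so $\pi(tz) = \chi(z) \pi(t)$ and hence $f(tz) = f(t)$; that is, $f$ factors through the quotient map $q \colon G \to G/\pkr\pi$. By the hypothesis that every element of $\widehat{G}$ vanishes at infinity modulo its projective kernel, the resulting function $\tilde f$ on $G/\pkr \pi$ lies in $C_0(G/\pkr \pi)$. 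Since $\pkr \pi$ is compact, $q$ is a proper map, so $f = \tilde f \circ q \in C_0(G)$. As $\xi *_\pi \eta$ is continuous and $|\xi *_\pi \eta| = f \in C_0(G)$, we conclude $\xi *_\pi \eta \in C_0(G)$. Since $\eta$ was arbitrary, $\pi$ vanishes at infinity.

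The main obstacle is really the first step, namely ruling out the case of non-compact projective kernel; fortunately this has already been handled in Corollary \ref{4.9c}(i), so the proof reduces to a clean two-step argument. The transfer from $G/\pkr \pi$ to $G$ is routine once one notes the factorisation of $f$ through the quotient and that compactness of $\pkr \pi$ makes $q$ proper.
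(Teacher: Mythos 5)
Your proof is correct and follows essentially the same route as the paper: Corollary \ref{4.9c}(i) forces $\pkr \pi$ to be compact (else $\mathcal{J}_{\pi,\xi}$ would be weak*-dense and hence, being weak*-closed and proper, yield a contradiction), and then vanishing at infinity modulo the now-compact projective kernel transfers to vanishing at infinity on $G$. The only difference is that you spell out the properness of the quotient map in the second step, which the paper leaves implicit.
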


\begin{proof}
	By Corollary \ref{4.9c}(i) $\pkr \pi$ must be compact, since otherwise
	$\mathcal{J}_{\pi, \xi}$ would be weak*-dense. 
	Since by hypothesis $\pi$ vanishes at infinity modulo $\pkr \pi$, it follows that $\pi$ itself vanishes at infinity.
\end{proof}

We can now prove our main classification theorem.

\begin{theorem}		\label{4.4a}
	Let $G$ be a Hermitian locally compact group with the property that every $\pi \in \widehat{G}$ vanishes at infinity modulo its projective kernel. 
	Then the weak*-closed maximal left ideals of $M(G)$ are given by $\mathcal{J}_{\pi, \xi}$,
	where $\pi \in \widehat{G}$ vanishes at infinity, and $\xi$ is a unit vector in $\mathcal{H}_\pi$ such that $\mathcal{I}_{\pi, \xi}$ is a maximal modular left ideal in $L^1(G)$.
\end{theorem}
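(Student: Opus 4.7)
The plan is to assemble the lemmas that precede the theorem into a two-sided argument, with no new technical work required.

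For the easy direction ($\Leftarrow$), suppose $\pi \in \widehat{G}$ vanishes at infinity and $\xi \in S_{\mathcal{H}_\pi}$ is such that $\mathcal{I}_{\pi,\xi}$ is a maximal modular left ideal of $L^1(G)$. Then Corollary \ref{4.4}(ii) applies verbatim and yields that $\mathcal{J}_{\pi,\xi}$ is a weak*-closed maximal left ideal of $M(G)$. (Note that the Hermitian hypothesis is not needed for this direction.)

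For the harder direction ($\Rightarrow$), let $J$ be an arbitrary weak*-closed maximal left ideal of $M(G)$. First, I would invoke Corollary \ref{4.3c}, which uses the Hermitian hypothesis together with Palmer's Theorem to produce $\pi \in \widehat{G}$ and $\xi \in S_{\mathcal{H}_\pi}$ with $J = \mathcal{J}_{\pi,\xi}$. Next, since $\mathcal{J}_{\pi,\xi}$ is weak*-closed, Lemma \ref{4.12} (whose hypothesis coincides exactly with our assumption on $G$) gives that $\pi$ vanishes at infinity. Finally, to see that $\mathcal{I}_{\pi,\xi}$ is a maximal modular left ideal of $L^1(G)$, I would apply Lemma \ref{4.3b} with $A = L^1(G)$ and $M(A) = M(G)$ to obtain that $J \cap L^1(G)$ is a maximal modular left ideal of $L^1(G)$, and then observe the elementary identity
\[
J \cap L^1(G) = \mathcal{J}_{\pi,\xi} \cap L^1(G) = \mathcal{I}_{\pi,\xi},
\]
which follows immediately from the definitions of these ideals.

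There is no genuine obstacle here: the three inputs (Corollary \ref{4.3c}, Lemma \ref{4.12}, and Lemma \ref{4.3b}) have been prepared so that they exactly match the three features of the classification (the form $\mathcal{J}_{\pi,\xi}$, vanishing at infinity of $\pi$, and maximal modularity of $\mathcal{I}_{\pi,\xi}$). The only point to be careful about is that the two hypotheses on $G$ are used in different places: Hermitianness enters through Corollary \ref{4.3c}, while the projective-kernel condition enters through Lemma \ref{4.12} (via Corollary \ref{4.9c}(i), which rules out the weak*-dense alternative by forcing $\pkr\pi$ to be compact). Together these close off every case, and the theorem follows.
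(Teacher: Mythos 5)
Your proposal is correct and follows essentially the same route as the paper: Corollary \ref{4.4} for the forward inclusion, then Corollary \ref{4.3c} and Lemma \ref{4.12} for the converse. The only difference is that you explicitly verify the maximal modularity of $\mathcal{I}_{\pi,\xi}$ via Lemma \ref{4.3b}, a point the paper leaves implicit (it is already contained in the proof of Corollary \ref{4.3c}); making it explicit is a harmless, indeed slightly more careful, addition.
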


\begin{proof}
	Let $\pi \in \widehat{G}$ vanish at infinity, and let $\xi \in L^1(G)$ be such that $\mathcal{I}_{\pi, \xi}$ is a maximal modular left ideal in $L^1(G)$. Then $\mathcal{J}_{\pi, \xi}$ is a weak*-closed maximal left ideal by Corollary \ref{4.4}. Conversely, if we take a weak*-closed maximal left ideal $J$ of $M(G)$, then Corollary \ref{4.3c} implies that $J = \mathcal{J}_{\pi, \xi}$ for some $\pi \in \widehat{G}$ and $\xi \in S_{\mathcal{H}_\pi}$. Now $\pi$ must vanish at infinity by Lemma \ref{4.12}.
\end{proof}

\textit{Remark.} By the remarks on page 221 of \cite{BT82} every irreducible representation of a connected nilpotent Lie group vanishes at infinity modulo its kernel, and as such the previous theorem applies. By \cite{HM79} it also applies to Hermitian, connected, real or complex algebraic groups. The theorem can also be seen to apply to the Fell groups $\Q_p \rtimes \mathcal{O}_p \ (p \in \N \text{ prime}),$ and $\R^n \rtimes SO(n) \ (n \in \N, \ n \geq 2)$, since the representations of these groups are classified (see for instance \cite[Examples 4.42-4.43]{KT}).  
\vskip 2mm

We conclude this section with a classification result for certain weighted measure algebras. In the case of a compact group $G$ there is a bijective correspondence between the maximal modular left ideals of $L^1(G)$ and the weak*-closed maximal left ideals of $M(G)$ \cite[Theorem 6.2]{W3}. The next proposition shows that, for certain weights on certain non-compact groups $G$ we get an analogous correspondence for $L^1(G, \omega)$ and $M(G, \omega)$. In particular, we get a description of the weak*-closed maximal left ideals of $M(G, \omega)$ which does not depend on the asymptotic properties of the representations of $G$.

\begin{theorem}		\label{4.3f}
	Let $G$ be a compactly-generated, locally compact group of polynomial growth. Let $\omega$ be a weight on $G$ such that
	$1/\omega \in C_0(G)$, $\omega(t^{-1}) = \omega(t)$, and $\lim_{n \rightarrow \infty} \omega(t^n)^{1/n} = 1 \ (t \in G)$. 
	%Let $\pi \in \widehat{G}, \xi \in S_{\mathcal{H}_\pi}$. 
	Then every maximal modular left ideal of $L^1(G, \omega)$ has the form $\mathcal{I}_{\pi, \xi, \omega}$ for some $\pi \in \widehat{G}$, and $\xi \in S_{\mathcal{H}_\pi}$.
	Moreover, the weak*-closed maximal left ideals of $M(G, \omega)$ are exactly given by $\mathcal{J}_{\pi, \xi, \omega}$, where $\pi \in \widehat{G}$, and $\xi \in S_{\mathcal{H}_\pi}$ is such that $\mathcal{I}_{\pi, \xi, \omega}$ is maximal modular in $L^1(G, \omega)$. 
\end{theorem}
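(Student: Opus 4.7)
The plan is to establish part (1) first and then deduce part (2) by essentially the same abstract machinery (Lemmas \ref{4.3a}, \ref{4.3b}, \ref{4.3}, Corollary \ref{4.4}) that proves Theorem \ref{4.4a}, with the key observation that the condition $1/\omega \in C_0(G)$ takes the place of the vanishing-at-infinity hypothesis on $\pi$. The main obstacle, which is essentially the only place the polynomial growth and GRS hypotheses enter, is proving that $L^1(G,\omega)$ is Hermitian; once that is in hand, everything else is routine from the abstract material already developed.

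For part (1), I would invoke the symmetry theorem of Fendler--Gr\"ochenig--Leinert (or the closely related work of Lud\-wig--Molitor-Braun), which states that for a compactly-generated locally compact group of polynomial growth and a symmetric weight $\omega$ satisfying the GRS condition $\lim_n \omega(t^n)^{1/n} = 1$, the Banach $*$-algebra $L^1(G,\omega)$ is symmetric, i.e.\ Hermitian. Once this is established, Palmer's theorem applies: every maximal modular left ideal of $L^1(G,\omega)$ has the form $\{f : \psi(f^*f) = 0\}$ for some pure state $\psi$. Running the GNS construction on $\psi$ yields an algebraically irreducible $*$-representation $\sigma$ of $L^1(G,\omega)$ with a cyclic unit vector $\xi$. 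The GRS hypothesis (in fact, the same symmetry result, or rather its consequence that $L^1(G,\omega)$ has the same enveloping $C^*$-algebra as $L^1(G)$) ensures that $\sigma$ integrates from some $\pi \in \widehat{G}$, and then $\{f : \psi(f^*f) = 0\} = \mathcal{I}_{\pi,\xi,\omega}$ as desired.

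For part (2), the forward direction is easy: since $1/\omega \in C_0(G)$, Lemma \ref{4.3} gives that $\theta_\xi \colon M(G,\omega) \to \mathcal{H}_\pi$ is weak*-weak* continuous for any $\pi \in \widehat{G}$ and any $\xi \in \mathcal{H}_\pi$, so $\mathcal{J}_{\pi,\xi,\omega} = \ker \theta_\xi$ is automatically weak*-closed; then Corollary \ref{4.3g} promotes it to a weak*-closed maximal left ideal of $M(G,\omega)$ whenever $\mathcal{I}_{\pi,\xi,\omega}$ is maximal modular. For the converse, given a weak*-closed maximal left ideal $J$ of $M(G,\omega)$, I would apply Lemma \ref{4.3b} (whose proof goes through verbatim with $A = L^1(G,\omega)$ and $M(A) = M(G,\omega)$, using that $L^1(G,\omega)$ has a bounded approximate identity and is therefore weak*-dense in $M(G,\omega)$) to conclude that $J \cap L^1(G,\omega)$ is maximal modular in $L^1(G,\omega)$; by part (1) it has the form $\mathcal{I}_{\pi,\xi,\omega}$. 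A weak*-density argument identical to the one in Corollary \ref{4.3c}, using that $J$ is weak*-closed and $\mathcal{J}_{\pi,\xi,\omega}$ is already weak*-closed by Lemma \ref{4.3}, then forces $J = \mathcal{J}_{\pi,\xi,\omega}$.

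The hard part is really the invocation of symmetry: everything else is a straightforward reworking of the arguments already developed in the unweighted case, the point being that the weight does the job previously done by the vanishing-at-infinity hypothesis. A minor technical point that I would verify carefully is that the standard approximate identity of $L^1(G,\omega)$, built from positive $L^1$-functions of integral one concentrated at $e$, is bounded in $\|\cdot\|_\omega$ (this uses continuity of $\omega$ and $\omega(e)=1$) and weak*-converges to $\delta_e$ in $M(G,\omega)$, so that $L^1(G,\omega)$ is weak*-dense in $M(G,\omega)$ as needed for Lemmas \ref{4.3a} and \ref{4.3b}.
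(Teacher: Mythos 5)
Your proposal is correct and follows essentially the same route as the paper: symmetry of $L^1(G,\omega)$ via Fendler--Gr\"ochenig--Leinert, the identification of the enveloping C*-algebra of $L^1(G,\omega)$ with $C^*(G)$ to see that the pure states arise from $\widehat{G}$, Palmer's theorem for the form of the maximal modular left ideals, and then Lemmas \ref{4.3a}, \ref{4.3b}, \ref{4.3} and the Corollary \ref{4.3c} density argument for the measure-algebra statement, with $1/\omega \in C_0(G)$ standing in for vanishing at infinity.
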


\begin{proof}
	Let $\gamma$ denote the maximal C*-norm of $L^1(G)$, and $\gamma_\omega$ denote that of 
	$L^1(G, \omega)$. Also let $r$ and $r_\omega$ denote the spectral radius in $L^1(G)$ and $L^1(G, \omega)$ respectively. By \cite[Theorem 1.3]{FGL} both $L^1(G)$ and $L^1(G, \omega)$ are Hermitian so that for
	$f \in L^1(G, \omega)$ we have 
	$$\gamma_\omega(f) = r_\omega(f^* *f)^{1/2} =r(f^* *f)^{1/2} = \gamma(f),$$
	where we have used condition (iv) of \cite[Theorem 1.3]{FGL} to get the second equality. Therefore the universal C*-algebra of $L^1(G, \omega)$ is $C^*(G)$. It follows that the pure states on $L^1(G, \omega)$ all have the form $f \mapsto \langle \pi(f)\xi, \xi \rangle \ (f \in L^1(G, \omega))$, for some $\pi \in \widehat{G}$ and $\xi \in S_{\mathcal{H}_\pi}$, and hence the maximal modular left ideals have the required form by Palmer's Theorem \cite{P72}.
	
	Now let $J$ be a weak*-closed maximal left ideal of $M(G, \omega)$. Then by Lemma \ref{4.3b} $J \cap L^1(G, \omega)$ is a maximal modular left ideal of $L^1(G, \omega)$, and hence has the form 
	$\mathcal{I}_{\pi, \xi, \omega}$, for some $\pi \in \widehat{G}$, and $\xi \in S_{\mathcal{H}_\pi}$, by the previous paragraph.
	We may now proceed exactly as in Corollary \ref{4.3c} to see that $J = \mathcal{J}_{\pi, \xi, \omega}$.
	
	Finally consider  $\pi \in \widehat{G}$, and $\xi \in S_{\mathcal{H}_\pi}$ is such that $\mathcal{I}_{\pi, \xi, \omega}$ is maximal modular in $L^1(G, \omega)$.
	By Lemma \ref{4.3} each ideal $\mathcal{J}_{\pi,\xi,\omega}$ is weak*-closed, and hence maximal by Lemma \ref{4.3a}(i).
\end{proof}

\textit{Remark.} There are many examples of weights satisfying the hypothesis of the previous theorem. For instance, given $n \in \N$ and $\alpha>0$, define a weight $\omega$ on $\R^n$ by $\omega(x) = (1+\|x\|)^\alpha$. 
\vskip 2mm

\section{Barnes' Theorem}
\noindent
The purpose of this section is to prove an analogue of Barnes' Theorem \ref{0.7} for the setting of certain Hermitian locally compact groups, and representations vanishing at infinity. We shall first give a reformulation of Barnes' Theorem (Proposition \ref{4.11}) that permits a direct comparison with our new result. We begin with a short lemma.

%The purpose of this short section is to give a reformulation of Barnes' theorem \cite{B80} about the correspondence between minimal idempotents and asymptotic properties of representations, that permits a direct comparison with our theorem about representations vanishing at infinity and weak*-closed left ideals of measure algebras later on.

\begin{lemma} 	\label{4.11a}
	Let $A$ be a semisimple faithful Banach algebra. Then any minimal left ideal of $M(A)$ which is contained in $A$ is also minimal in $A$.
\end{lemma}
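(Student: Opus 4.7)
The plan is to exploit the fact that, for $x \in A$, the set $Ax$ is not only a left ideal of $A$ but also a left ideal of $M(A)$, because $A$ is a two-sided ideal in $M(A)$. This is what lets us transfer information between the two algebras.

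So, first I would fix a minimal left ideal $L$ of $M(A)$ with $L \subseteq A$, and take an arbitrary $0 \neq x \in L$. For any $m \in M(A)$ and $a \in A$ we have $m(ax) = (ma)x$ with $ma \in A$, so $Ax$ is a left ideal of $M(A)$ contained in $L$. By minimality of $L$ in $M(A)$, either $Ax = 0$ or $Ax = L$. The first case gives $x = 0$ by faithfulness of $A$ (since $x \in A$), contradicting the choice of $x$. Hence $Ax = L$ for every nonzero $x \in L$.

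From this it is immediate that $L$ is minimal as a left ideal of $A$: given a nonzero $A$-submodule $L' \subseteq L$, choose $0 \neq y \in L'$; then $L = Ay \subseteq L'$, forcing $L' = L$. It remains only to observe that $L$ is genuinely a nonzero left ideal of $A$, which is clear since $L$ is a nonzero subspace of $A$ stable under left multiplication by $A \subseteq M(A)$.

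There is essentially no obstacle here — the argument is a short bootstrapping from the $M(A)$-ideal structure to the $A$-ideal structure, and the only place where a hypothesis is really used is the step $Ax = 0 \Rightarrow x = 0$, which is the definition of faithfulness. Semisimplicity of $A$ is not strictly needed for this lemma in isolation, but it is presumably the standing hypothesis under which the lemma will be applied in the sequel (in particular, in reformulating Barnes' Theorem, where minimal projections in $L^1(G)$ and in $M(G)$ must be related).
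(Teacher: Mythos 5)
Your proof is correct and follows essentially the same route as the paper's: in both, the key step is that $Ax$ is a left ideal of $M(A)$ contained in the given minimal ideal, so minimality in $M(A)$ forces $Ax$ to be all of it, and minimality in $A$ follows at once. The only difference is that you rule out $Ax = \{0\}$ directly from faithfulness (legitimate under the paper's definition of faithful), whereas the paper instead invokes semisimplicity to produce $x$ with $x^2 \neq 0$; your remark that semisimplicity is not strictly needed for this lemma is therefore accurate.
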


\begin{proof}
	Let $J$ be such a minimal left ideal of $M(A)$ contained in $A$. If $J$ is not minimal in $A$ then it contains a smaller non-zero left ideal $I$. Since $A$ is semisimple, there must exist $x \in I$ for which $x^2 \neq 0$, and then in particular
	$Ax \neq \{ 0 \}$. Since $Ax$ is a left ideal of $M(A)$, we must have $J = Ax$ by minimality. But $x \in I$ so that $J \subset I$, which is a contradiction.
\end{proof}

%Given an irreducible *-representation $\pi$ of a C*-algebra $A$, it is well known that $\pi$ admits a unique extension to an irreducible *-representation of the multiplier algebra $M(A)$, which we continue to denote by $\pi$. In this section we shall write $\mathcal{K}_{\pi, \xi}$ for the left ideal of $M(C^*_r(G))$ given by $\{ a \in M(C^*_r(G)) : \pi(a)\xi = 0 \}$. 

The following proposition is essentially a rephrasing of Barnes' Theorem in terms of the maximal left ideals of $M(G)$.

\begin{proposition}		\label{4.11}
		Let $G$ be a unimodular locally compact group, and let $\pi \in \widehat{G}$. There exists $\xi \in \mathcal{H}_\pi$ such that $\mathcal{J}_{\pi, \xi}$ is a maximal left ideal generated by a projection if and only if $\pi$ is integrable.
\end{proposition}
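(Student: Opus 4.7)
My plan is to translate the statement into a dictionary between maximal left ideals of $M(G)$ generated by a projection and minimal projections in $L^1(G)$, and then invoke Barnes' Theorem \ref{0.7} in its original form. Throughout, if $q \in M(G)$ is a projection, I will set $p = 1 - q$, which is again a projection, and note that the left ideal generated by $q$ in the unital algebra $M(G)$ is simply $M(G)q = M(G)(1-p)$, with complementary simple module $M(G)p$.

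The first key step is to show that if $J = M(G)(1-p)$ is a maximal left ideal of $M(G)$ for some projection $p \in M(G)$, then $p$ actually lies in $L^1(G)$ and is a minimal projection there. Maximality of $J$ makes $M(G)p$ a minimal left ideal of $M(G)$. Now $L^1(G) p$ is an $M(G)$-submodule of $M(G)p$ (since $L^1(G)$ is a two-sided ideal of $M(G)$), and it is non-zero because a contractive approximate identity $(e_\alpha) \subset L^1(G)$ converges weak* to $\delta_e$, forcing $e_\alpha p \to p \neq 0$ by separate weak*-continuity. Minimality gives $L^1(G)p = M(G)p$, so in particular $p \in L^1(G)$. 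Lemma \ref{4.11a} then ensures that $L^1(G)p$ is a minimal left ideal of $L^1(G)$ as well, so $p$ is a minimal projection. Conversely, if $p \in L^1(G)$ is a minimal projection, the same chain of arguments shows $M(G)p = L^1(G)p$ is minimal in $M(G)$, hence $M(G)(1-p)$ is a maximal left ideal generated by the projection $1-p$.

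For the forward direction, suppose $\mathcal{J}_{\pi, \xi} = M(G)(1-p)$ is a maximal left ideal generated by a projection. By the previous step, $p$ is a minimal projection in $L^1(G)$. Moreover, $1-p \in \mathcal{J}_{\pi,\xi}$ means $\pi(p)\xi = \xi$, so the linear map $L^1(G)p \to \mathcal{H}_\pi$ sending $fp \mapsto \pi(f)\xi$ is an injective $L^1(G)$-module homomorphism with dense range (density follows from irreducibility of $\pi$, since $\pi(L^1(G))\xi$ is a non-zero $\pi(G)$-invariant subspace using that $L^1(G)$ is an ideal of $M(G)$). This identifies $\pi$ with the irreducible representation determined by $p$ in Barnes' sense, and Theorem \ref{0.7} concludes that $\pi$ is integrable.

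For the reverse direction, assume $\pi$ is integrable. Theorem \ref{0.7} yields a minimal projection $p \in L^1(G)$ with $\pi$ determined by $p$. Minimality of $p$ gives $pL^1(G)p = \mathbb{C}p$, hence $\pi(p)\pi(L^1(G))\pi(p) = \mathbb{C}\pi(p)$, which forces $\pi(p)$ to be a rank-one projection. Pick a unit vector $\xi$ in the range of $\pi(p)$; then $\pi(1-p)\xi = 0$, so $M(G)(1-p) \subset \mathcal{J}_{\pi,\xi}$. By the first step $M(G)(1-p)$ is maximal, and $\mathcal{J}_{\pi,\xi}$ is proper (as $\pi(\delta_e)\xi = \xi \neq 0$), so equality holds and $\mathcal{J}_{\pi,\xi}$ is generated by the projection $1-p$. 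The main technical obstacle I anticipate is the first step: one must verify carefully that the transition between $M(G)p$ and $L^1(G)p$ really does preserve minimality (so that Barnes' hypothesis in $L^1(G)$ is equivalent to the maximality hypothesis in $M(G)$), and that $p$ is automatically self-adjoint rather than just idempotent — the latter being built into our convention that "projection" means a self-adjoint idempotent, inherited from $q$.
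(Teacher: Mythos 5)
Your proof is correct and follows the same skeleton as the paper's --- transfer between maximal/minimal left ideals of $M(G)$ and minimal projections of $L^1(G)$ via Lemma \ref{4.11a}, then invoke Barnes' Theorem --- but the execution differs in both directions in ways worth noting. In the reverse direction the paper does not argue via maximality of $M(G)*(\delta_e-p)$ at all: it shows directly that every $\mu \in \mathcal{J}_{\pi,\xi}$ satisfies $\mu = \mu*(\delta_e-p)$, by checking that $\sigma(\mu) = \sigma(\mu*(\delta_e-p))$ for every irreducible $\sigma$ (using \cite[Proposition 1]{B80} to get $\sigma(p)=0$ for $\sigma \neq \pi$) and then appealing to the fact that irreducible representations separate the points of $M(G)$. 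Your route --- deduce that $M(G)p = L^1(G)p$ is minimal in $M(G)$ from minimality of $L^1(G)p$ in $L^1(G)$, hence $M(G)*(\delta_e-p)$ is maximal, and conclude by properness of $\mathcal{J}_{\pi,\xi}$ --- is arguably cleaner: it avoids the separation-of-points claim and, unlike the paper's proof, makes the maximality assertion in the statement explicit rather than implicit. (The upward transfer of minimality does work, since $\mu*p = (\mu*p)*p \in L^1(G)*p$ gives $M(G)*p = L^1(G)*p$, and any $M(G)$-left ideal inside it is in particular an $L^1(G)$-left ideal.) In the forward direction, where the paper simply notes $\pi(p)\xi = \xi \neq 0$ and cites \cite[Proposition 1]{B80} to conclude $\pi = \sigma$, you instead build an injective dense-range module map $L^1(G)p \to \mathcal{H}_\pi$; this is fine in spirit, but ``this identifies $\pi$ with the representation determined by $p$'' quietly assumes the map is compatible with the Hilbert space structure Barnes places on $L^1(G)p$ (it is, since $\langle \pi(f)\xi,\pi(g)\xi\rangle = \langle \pi(pg^**fp)\xi,\xi\rangle$ and $pL^1(G)p = \C p$, but you should say so, or simply cite Barnes' Proposition 1 as the paper does, which is the shorter and safer finish).
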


\begin{proof}
	Suppose that $\pi$ is integrable. Then $\pi$ is determined by some minimal projection $p$ in $L^1(G)$ as in \cite{B80}. It is clear from the construction of representations determined by a minimal projection that $\pi(p)$ is a rank-one projection, onto a vector that we shall call $\xi$. 
	Then $\delta_e - p \in \mathcal{J}_{\pi, \xi}$. Moreover, for every $\mu \in \mathcal{J}_{\pi, \xi}$ we have $\pi(\mu*(\delta_e - p)) \xi = 0 = \pi(\mu)\xi$ and $\pi(\mu*(\delta_e - p))\eta = \pi(\mu)\eta$ for $\eta \in \{ \xi \}^\perp$, so that $\pi(\mu*(\delta_e - p)) = \pi(\mu)$.
	
	By \cite[Proposition 1]{B80} $\sigma(p) = 0$ for all irreducible representations $\sigma \neq \pi$. Hence $\sigma(\delta_e - p) = \id_{\mathcal{H}_\sigma}$ so that $\sigma(\mu) = \sigma(\mu*(\delta_e - p))$ for all $\mu \in \mathcal{J}_{\pi, \xi}$. Since irreducible representations separate the points of $M(G)$, this forces $\mu = \mu*(\delta_e - p)$ for $\mu \in \mathcal{J}_{\pi, \xi}$, and hence $\mathcal{J}_{\pi, \xi} = M(G)*(\delta_e - p)$.
	
	Now suppose instead that there exists $\xi \in S_{\mathcal{H}_\pi}$ such that  $\mathcal{J}_{\pi, \xi}$ is maximal and that $\mathcal{J}_{\pi, \xi} = M(G)*q$ for some projection $q \in M(G)$. Then by routine algebra $M(G)*p$ is a minimal left ideal in $M(G)$, where $p : = \delta_e - q$. Note that $L^1(G)*p \neq 0$, and hence $L^1(G)*p = M(G)*p$ by minimality, whence $p \in L^1(G)$. It follows from Lemma \ref{4.11a} that $L^1(G)*p$ is a minimal left ideal in $L^1(G)$.
	
	We may now apply Barnes' Theorem \ref{0.7} to conclude that $p$ determines an irreducible integrable representation $\sigma \in \widehat{G}$. Now note that, since $\delta_e - p = q \in \mathcal{J}_{\pi, \xi}$, we have $\pi(p)\xi = \id_{\mathcal{H}_\pi}\xi - \pi(q) \xi = \xi \neq 0$, so that by \cite[Proposition 1]{B80} $\pi = \sigma$.
\end{proof}

Although Palmer's Theorem tells us that, for a Hermitian locally compact group $G$, every maximal modular left ideal of $L^1(G)$ has the form $\mathcal{I}_{\pi, \xi}$, for some $\pi \in \widehat{G}$, and some $\xi \in S_{\mathcal{H}_\pi}$, it seems very difficult to determine, for any non-Moore group $G$, for which $\pi \in \widehat{G}$ and $\xi \in S_{\mathcal{H}_\pi}$ the left ideal $\mathcal{I}_{\pi, \xi}$ is maximal modular. (On the other hand, in the C*-algebraic setting all such left ideals are maximal modular).
Indeed, the only general result that we have been able to establish is the following modest proposition.

\begin{proposition}		\label{4.13}
	Let $G$ be a Hermitian CCR locally compact group. For every $\pi \in \widehat{G}$ there exists  
	$\xi \in S_{\mathcal{H}_\pi}$ such that $\mathcal{I}_{\pi, \xi}$ is a maximal modular left ideal.
\end{proposition}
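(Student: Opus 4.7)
The plan is to pass to the quotient $B := L^1(G)/\ker\pi$ and apply Palmer's theorem there. First I would observe that $B$ is a Hermitian Banach $*$-algebra---immediate since $\ker\pi$ is a closed self-adjoint two-sided ideal of the Hermitian $L^1(G)$---and that the induced $*$-homomorphism $\bar\pi \colon B \to \K(\mathcal{H}_\pi)$ is injective with dense image, by the CCR hypothesis. Since $\K(\mathcal{H}_\pi)$ is semisimple and $\bar\pi$ is injective, $B$ is semisimple, and in particular $B$ admits at least one maximal modular left ideal $I$.

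Next I would apply Palmer's theorem to $B$ to realise $I$ as the left kernel $\{b\in B : \psi(b^*b) = 0\}$ of some pure state $\psi$. The GNS construction produces an irreducible $*$-representation of $B$, which pulls back via the quotient map to an irreducible $*$-representation of $L^1(G)$ whose kernel contains $\ker\pi$. The standard correspondence for Hermitian groups between irreducible $*$-representations of $L^1(G)$ and elements of $\widehat{G}$ then produces some $\sigma\in\widehat{G}$ whose integrated form is this representation, so that $\ker\sigma\supseteq\ker\pi$ in $L^1(G)$. I would then invoke CCR---in the form that $\widehat{G}$ is $T_1$, equivalently that distinct points of $\widehat{G}$ have incomparable primitive ideals under the correspondence between primitive ideals of $L^1(G)$ and of $C^*(G)$ available in the Hermitian setting---to force $\sigma\cong\pi$. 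With $\sigma\cong\pi$, the pure state $\psi$ takes the form $\psi(b) = \langle \bar\pi(b)\xi,\xi\rangle$ for some unit vector $\xi\in\mathcal{H}_\pi$, and $I$ lifts via the quotient map to $\mathcal{I}_{\pi,\xi}$, which is therefore maximal modular in $L^1(G)$, as required.

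The step I expect to require the most care is the CCR-based identification $\sigma\cong\pi$: one must ensure that the containment $\ker\sigma\supseteq\ker\pi$ in $L^1(G)$ genuinely forces $\sigma\cong\pi$, which rests on the correspondence between primitive ideals of $L^1(G)$ and of $C^*(G)$ available for Hermitian groups, so that the $C^*(G)$-maximality of $\ker\pi$ (immediate from CCR together with the simplicity of $\K(\mathcal{H}_\pi)$) can be imported to the $L^1(G)$ level. The remainder of the argument is a fairly routine deployment of Palmer's theorem and the GNS construction on a semisimple Hermitian quotient.
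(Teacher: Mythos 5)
Your proposal is correct and follows essentially the same route as the paper: the paper also passes to the quotient $\mathcal{A}_\pi=\pi(L^1(G))\cong L^1(G)/\ker\pi$ (Hermitian by Leptin, semisimple, hence possessing a maximal modular left ideal), applies Palmer's theorem there, lifts the resulting ideal to $L^1(G)$, and uses the CCR hypothesis via the $T_1$ property of $\widehat{G}$ to force the representation produced to coincide with $\pi$. The only cosmetic difference is at the final identification, where the paper extends the quotient representation to $C^*(G)$ and compares kernels there directly rather than invoking a primitive-ideal correspondence at the $L^1$ level.
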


\begin{proof}
	Fix $\pi \in \widehat{G}$, set $\mathcal{A}_\pi = \pi(L^1(G))$, which is a Banach *-algebra with the quotient topology. We may suppose that $\dim \mathcal{A}_\pi >1$, since otherwise the result is trivial. We shall show that every maximal modular left ideal of $\mathcal{A}_\pi$ lifts to a maximal modular left ideal of $L^1(G)$ of the form $\mathcal{I}_{\pi, \xi}$. Since $\mathcal{A}_\pi$ is semisimple and not equal to $\C$, it has at least one maximal modular left ideal, and this will imply the result.
	
	Let $I$ be a maximal modular left ideal of $\mathcal{A}_\pi$.  Since $L^1(G)$ is Hermitian, so is $\mathcal{A}_\pi$ by \cite{Lep76}, and therefore it follows from Palmer's Theorem \cite{P72} that 
	$I$ is of the form $\{ a \in \mathcal{A}_\pi : \sigma(a) \zeta = 0 \},$ for some irreducible *-representation $\sigma \colon \mathcal{A}_\pi \rightarrow \B(\mathcal{H}_\sigma),$ and some $\zeta \in S_{\mathcal{H}_\sigma}.$ Defining $J = \{ f \in L^1(G) : \pi(f) \in I \}$ gives a maximal modular left ideal of $L^1(G)$.
	
	The *-representation $\sigma$ extends via $\pi$ to $L^1(G)$, and hence to an irreducible *-representation of $C^*(G)$, which we shall denote by $\widetilde{\sigma}$.  With this notation we have $J = \{ f \in L^1(G) : \widetilde{\sigma}(f)\zeta = 0 \}.$ Note that, since $\widetilde{\sigma}$ factors through $\pi$, we have $\ker \widetilde{\sigma} \supset \ker \pi$ in $C^*(G)$.
	In terms of the Fell topology this says that $\widetilde{\sigma} \in \overline{\{ \pi \}}$, which forces $\widetilde{\sigma}$ and  $\pi$ to be equal as elements of $\widehat{G}$, since $\widehat{G}$ is $T_1$ by \cite[Theorem (7.7)]{F} (as $G$ is CCR). It follows that $J$ can be rewritten as $J = \{ f \in L^1(G) : \pi(f) \xi = 0 \}$, for some $\xi \in S_{\mathcal{H}_\pi}$.
\end{proof} 

Next we present our analogue of Barnes' Theorem for irreducible representations vanishing at infinity.

\begin{theorem}		\label{4.15}
	Let $G$ be a Hermitian CCR locally compact group for which every irreducible representation vanishes at infinity modulo its projective kernel, and let $\pi \in \widehat{G}$. Then there exists $\xi \in S_{\mathcal{H}_\pi}$ for which $\mathcal{J}_{\pi, \xi}$ is a weak*-closed maximal left ideal of $M(G)$ if and only if $\pi$ vanishes at infinity.	
\end{theorem}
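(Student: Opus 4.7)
The plan is to observe that Theorem \ref{4.15} falls out essentially by combining the machinery already developed in the excerpt, so I would structure the proof as two short implications that invoke earlier results.

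For the \emph{if} direction, suppose $\pi \in \widehat{G}$ vanishes at infinity. The idea is to find $\xi$ such that $\mathcal{I}_{\pi,\xi}$ is already maximal modular in $L^1(G)$, and then lift this to the measure algebra. Since $G$ is Hermitian and CCR, Proposition \ref{4.13} supplies a unit vector $\xi \in \mathcal{H}_\pi$ for which $\mathcal{I}_{\pi,\xi}$ is a maximal modular left ideal of $L^1(G)$. Because $\pi$ vanishes at infinity, Corollary \ref{4.4}(ii) then immediately yields that $\mathcal{J}_{\pi,\xi}$ is a weak*-closed maximal left ideal of $M(G)$, giving the desired $\xi$.

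For the \emph{only if} direction, suppose there exists $\xi \in S_{\mathcal{H}_\pi}$ such that $\mathcal{J}_{\pi,\xi}$ is weak*-closed (we do not even need maximality here). By hypothesis every irreducible representation of $G$ vanishes at infinity modulo its projective kernel, so Lemma \ref{4.12} applies verbatim and forces $\pi$ itself to vanish at infinity. This completes the equivalence.

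There is no substantive obstacle to overcome: the work has already been done in Proposition \ref{4.13}, Corollary \ref{4.4}, and Lemma \ref{4.12}. The only thing to be slightly careful about is that Proposition \ref{4.13} requires the Hermitian plus CCR hypotheses in order to produce a vector $\xi$ with $\mathcal{I}_{\pi,\xi}$ maximal modular — without CCR we would only know that \emph{some} maximal modular left ideal has the form $\mathcal{I}_{\pi,\xi}$, not that we can prescribe the representation $\pi$ in advance. This is why the CCR assumption, beyond the hypotheses of Theorem \ref{4.4a}, is built into the statement.
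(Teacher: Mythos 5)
Your proof is correct and follows essentially the same route as the paper: the paper cites Proposition \ref{4.13} together with Theorem \ref{4.4a} for the forward direction and Theorem \ref{4.4a} again for the converse, while you unpack Theorem \ref{4.4a} into its ingredients, Corollary \ref{4.4}(ii) and Lemma \ref{4.12}, which is the same argument. Your observation that only weak*-closedness (not maximality) of $\mathcal{J}_{\pi,\xi}$ is needed for the converse is accurate and slightly sharper than the paper's phrasing.
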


\begin{proof}
	First suppose that $\pi$ vanishes at infinity. By Proposition \ref{4.13} there exists $\xi \in S_{\mathcal{H}_\pi}$ such that $\mathcal{I}_{\pi, \xi}$ is a maximal modular left ideal of $L^1(G)$. By Theorem \ref{4.4a} $\mathcal{J}_{\pi, \xi}$ is maximal and weak*-closed.
	
	Now suppose instead that there exists $\xi \in S_{\mathcal{H}_\pi}$ such that $\mathcal{J}_{\pi, \xi}$ is a weak*-closed maximal left ideal of $M(G)$. Then Theorem \ref{4.4a} implies that $\pi$ vanishes at infinity.
\end{proof}

\textit{Remark.} Examples of groups satisfying the conditions of the previous theorem include connected nilpotent Lie groups and  $\R^n \rtimes SO(n) \ (n \in \N, \ n \geq 2)$.
\vskip 2mm

%\textit{Remark.}
%A theorem of Dixmier \cite{} \textbf{[cite, or maybe this is obvious by facts in Dixmier, rather than a theorem of his]} says that for a unimodular locally compact group $G$, every irreducible representation that is integrable also vanishes at infinity. The theory that we have developed gives a rather pleasing way to see this for the more restricted class of groups to which it applies which are also unimodular. Indeed, for such a group every integrable representation corresponds to a maximal left ideal of $M(G)$ which is generated by a projection. Being finitely-generated, this left ideal is also weak*-closed, and hence the representation to which it corresponds vanishes at infinity. \textbf{[Is this even true?]}
%\vskip 2mm

\section{Examples}
\noindent
In this section we use the theory that we have developed to give some examples of measure algebras for which the weak*-closed maximal left ideals can be described.
Our first example provides a class of groups whose measure algebras possess an abundance of weak*-closed maximal left ideals, none of which is generated by a projection. We do not know if they can be finitely-generated.

\begin{example}		\label{eg4.13c}
	Let $G = \R^n \rtimes SO(n) \ (n \geq 2)$. Then $G$ is Hermitian by \cite[e.g. (a), pg. 191]{L}. It is known (see e.g. \cite[Example 4.42]{KT}) that the irreducible representations of $G$ fall into two kinds: those factoring through $SO(n)$, and those which are induced from irreducible representations of $\R^n \rtimes SO(n-1)$; the latter are the infinite-dimensional irreducible representations, and they vanish at infinity by \cite[Theorem 7.6]{KT}. However, $G$ has no integrable irreducible representations: this follows from  \cite[Corollary 2]{B80}, and the fact that $\widehat{G}$ with the Fell topology has no open points. It follows that $M(G)$ has no maximal left ideals generated by a projection by Proposition \ref{4.11}. As such, Proposition \ref{4.13} and Theorem \ref{4.4a} together imply that for each $\pi \in \widehat{G} \setminus \widehat{SO(n)}$ there exists $\xi \in \mathcal{H}_\pi$ such that $\mathcal{J}_{\pi, \xi}$ is a weak*-closed maximal left ideal, but is not generated by a projection. 
\end{example}

The next example contrasts with the previous one.

\begin{example}		\label{e.g.4.13e}
	Let $H$ denote the Heisenberg group
	$$H=\left\{ 
	\begin{pmatrix}
	1 & x & z \\
	0 & 1 & y \\
	0 & 0 & 1
	\end{pmatrix}
	: x,y,z \in \R
	\right\},$$
	 and let $N$ be the subgroup with $x = y =0$ and $z \in \Z$. Let $G$ be $H/N$, which is sometimes called the Weyl-Heisenberg group. This group is a connected nilpotent Lie group, and as such Theorem \ref{4.4a} applies. Identify $G$ with the set $\R \times \R \times \T$, with the multiplication given by
	 $$(x_1,y_1,a_1)(x_2,y_2,a_2) = (x_1+x_2,y_1+y_2, a_1a_2\e^{ix_1y_2}).$$
	 The infinite dimensional irreducible representations of $G$ act on $L^2(\R)$ and have the form 
	 $$[U_n(x,y,a)f](s) =a^n\e^{-insy}f(s-x) \qquad (f \in L^2(\R))$$
	 for $n \in \Z \setminus \{ 0 \}$. Each $U_n$ is integrable (for instance $\xi*_{U_n}\xi \in L^1(G)$, where $\xi(s) = \e^{-s^2}$), and as such vanishes at infinity, so that Theorem \ref{4.4a} tells us that the weak*-closed maximal left ideals of $M(G)$ have the form $\mathcal{J}_{U_n,\xi}$, where $n \in \Z\setminus \{ 0 \}$ and $\xi \in S_{\mathcal{H}_{U_n}}$. In fact we can say more: by \cite{V84}, for every $n \in \Z \setminus \{ 0 \}$ there exists a dense subspace $E \subset L^2(\R)$ such that, for every $\xi \in E$, the function $\xi*_{U_n}\xi$ can be scaled to give a minimal projection in $L^1(G)$.  Denoting this projection by $p$, it follows from the proof of Proposition \ref{4.11} that $\mathcal{J}_{{U_n},\xi} = M(G)*(\delta_e - p)$, and that it is a maximal left ideal. 
	 We do not know whether all of the weak*-closed maximal left ideals are generated by projections.
\end{example}

%\begin{example}		\label{e.g.4.13d}
%	Fix a prime number $p$ and let $\mathcal{O}_p$ denote the compact subgroup of the p-adic numbers $\Q_p$ given by $\{ x\in\Q_p : |x|_p = 1 \}$. Let $G$ be the Fell group $\Q_p \rtimes \mathcal{O}_p$. The irreducible representations of $G$ fall into two kinds: those factoring through $\mathcal{O}_p$ and those induced from $\Q_p$; the latter are integrable, whereas the former do not vanish at infinity \textbf{[cite]}. Hence by Theorem \ref{4.4a} the weak*-closed maximal left ideals of $M(G)$ have the form $\mathcal{J}_{\pi,\xi}$, where $\pi$ is a representation induced from an character on $\Q_p$, and $\xi \in S_{\mathcal{H}_\pi}$. In fact we can say more: by \cite{V84}, for every integrable $\pi \in \widehat{G}$ there exists a dense subspace $\widetilde{\mathcal{H}_\pi} \subset \mathcal{H}_\pi$ such that, for every $\xi \in \widetilde{\mathcal{H}_\pi}$, the function $\xi*_\pi\xi$ can be scaled to give a minimal projection in $L^1(G)$.  Denoting this projection by $p$, it follows from the proof of Proposition \ref{4.11} that $\mathcal{J}_{\pi,\xi} = M(G)*(\delta_e - p)$, and that it is a maximal left ideal. 
%	We do not know whether all of the weak*-closed maximal left ideals are generated by projections.
%\end{example}

We have seen in Proposition \ref{1.4} that a group with non-compact centre cannot have any weak*-closed maximal left ideals in its measure algebra, and Corollary \ref{4.9c}(ii) gives an even stronger statement for Hermitian groups. The next example is of a class of groups, not covered by these previous results, but which nonetheless has no weak*-closed maximal left ideals in its measure algebra. 

\begin{example} 		\label{eg4.13b}
	Consider $G = \R^2 \rtimes_A \R$, where $A \in M_2(\R),$ and $\R$ acts on $\R^2$ via $$t \colon x \rightarrow {\rm e}^{tA}x \qquad (t \in \R, \ x \in \R^2).$$ We claim that for these groups $M(G)$ has no weak*-closed maximal left ideals. 
	
	In \cite[Chapter 4]{KT} the Mackey Machine is employed to classify the irreducible representations of the groups $\R^2 \rtimes_A \R \ (A \in M_2(\R))$ according to the Jordan normal form of $A$. In the case that $A$ is similar to a matrix of the form $\big(\begin{smallmatrix} 0 & \lambda \\ -\lambda &0 \end{smallmatrix}\big),$ for some $\lambda \in \R \setminus \{ 0 \}$, the centre of $G$ contains a copy of $\Z$, so in this case the claim follows by Proposition \ref{1.4}. 
	
	We shall focus on the remaining cases, where $A$ is not similar to a matrix of the form $\big(\begin{smallmatrix} 0 & \lambda \\ -\lambda &0 \end{smallmatrix}\big)$. In these cases the elements of $\widehat{G}$ either factor through $\R$, or have the form  $U^\gamma \colon G \rightarrow \B(L^2(\R))$, where	$\gamma \in \R^2$, and $U^\gamma$ is given by
	\begin{equation}		\label{eq6.1}
		[U^{\gamma}(x, t)f](s) = \e^{i\gamma \cdot \e^{-sA}x}f(s-t)
		\qquad (s \in \R, \ f \in L^2(\R), \ (x, t) \in \R^2 \rtimes_A \R).
	\end{equation}

	Note that $G$ satisfies the hypothesis of Theorem \ref{4.4a}: the representations appearing \eqref{eq6.1} are induced from $\R^2$, and vanish at infinity modulo their kernels by \cite{BT82}. Of course, in the case of a representation that factors through the subgroup $\R$, the projective kernel is the entire group $G$, so that these representations vanish at infinity modulo their projective kernels for trivial reasons. Finally, $G$ is Hermitian by \cite{Lep77}. We shall show that none of the irreducible representations vanish at infinity, and hence, by Theorem \ref{4.4a}, $M(G)$ has no weak*-closed maximal left ideals.
	
	We only need to consider the infinite-dimensional irreducible representations. We shall divide into two cases.
	
	\underline{Case 1.} Suppose that $\{ \gamma \}^\perp$ is invariant for the action of 
	$\R$ on $\R^2$ via $A$. Then, taking $f = \chi_{[0,1]},$ we see that
	$$\langle U^\gamma (x, 0)f, f \rangle = \int_{-\infty}^{\infty} \chi_{[0,1]}(s) \dd s
	= 1,$$
	for all $x \in \{ \gamma \}^\perp$.  Hence $f*_{U^\gamma} f \notin C_0(\R^2 \rtimes_A \R)$.
	
	\underline{Case 2.} Suppose that $\{ \gamma \}^\perp$ is not invariant for the action of 
	$\R$ on $\R^2$ via $A$. Fix $x \in \{ \gamma \}^\perp \setminus \{ 0 \}$ and $a \in \R$ such that $\gamma \cdot \e^{-aA}x \neq 0$. By scaling $x$ we may assume that $\gamma \cdot \e^{-aA}x = 1$. Define $f = \chi_{[0, a]}$, and $g(s) = -\gamma \cdot (A\e^{-sA}x)\chi_{[0,a]}(s) \ (s \in \R)$, and note that 
	$$\frac{\rm d}{{\rm d}s} \gamma \cdot \e^{-sA}x = -\gamma \cdot (A\e^{-sA}x).$$
	Then for all $k \in \N$ we have
	\begin{align*}
	\langle U^\gamma ( (2k+1) \pi x, 0)f, g \rangle &= \int_0^{a} \e^{ (2k+1) \pi  i\gamma \cdot \e^{-sA}x}(- (2k+1) \pi \gamma \cdot (A\e^{-sA}x)) \dd s \\
	&= -i[\e^{ (2k+1) \pi  i\gamma \cdot \e^{-sA}x}]^a_0 = -i(\e^{(2k+1)\pi i} - e^0) =2i.
	\end{align*}
	Hence $f*_{U^\gamma} g \notin C_0(\R^2 \rtimes_A \R)$, and this concludes the argument.
	
	Finally, we note that many of these groups are not covered by Corollary \ref{4.9c}(ii). Indeed, it is routinely shown that these groups have no proper finite index subgroups, and have trivial centre whenever the action determined by $A$ is faithful and has no non-zero fixed points. This happens, for instance, for $A = \big(\begin{smallmatrix} 1 & 0\\ 0 & -1 \end{smallmatrix}\big)$ or 
	$A = \big(\begin{smallmatrix} \lambda & 1\\ 0 & \lambda \end{smallmatrix}\big)$, 
	for $\lambda \in \R \setminus \{ 0 \}$.
\end{example}

\section{Finitely-Generated Closed Left Ideals of Measure Algebras}
\noindent

In this section we shall show how the results that we have proved so far imply Theorem \ref{0.3}. We shall then take a more detailed look at the weak*-closed left ideals of $M(\R^2 \rtimes SO(2))$ (see Example \ref{eg4.13c} above), and in particular prove Theorem \ref{0.4}.

\begin{proof}[Proof of Theorem \ref{0.3}]
	(i) Let $G$ be a Moore group and suppose that $M(G)$ has a maximal left ideal $J$ that is weak*-closed. Moore groups are Hermitian by \cite[diagram, pg. 1486]{P01}.
	By Corollary \ref{4.3c} there exists $\pi \in \widehat{G}$ and $\xi \in S_{\mathcal{H}_\pi}$ such that $J = \mathcal{J}_{\pi, \xi}$.
	Since $\pi$ is finite-dimensional, $\mathcal{J}_{\pi, \xi}$ has finite codimension and hence $G$ must be compact by Corollary \ref{1.3}.
	
	(ii) This is immediate from Proposition \ref{1.4}.
	
	(iii) $G$ has a finite-index normal nilpotent subgroup $N$, which is also finitely-generated. By \cite[5.2.22(ii)]{Rob} $Z(N)$ is infinite. The result now follows from Corollary \ref{4.9c}(ii).
	
	(iv) This is Example \ref{eg4.13b}.
\end{proof}

We now turn to the proof of Theorem \ref{0.4}. In very broad terms the idea is to show that if $\mathcal{J}_{U,1}$ is generated by finitely many compactly-supported measures, then measures belonging to the ideal decay unexpectedly rapidly in a certain sense, and this will give a contradiction.

For the remainder of this section we shall set $G = \R^2 \rtimes SO(2)$. For notational convenience we shall sometimes formally identify $SO(2)$ with $[0, 2\pi)$.  This allows us to represent an element of $G$ as $(x, \theta)$, where $x \in \R^2$, and $\theta \in [0,2\pi)$. 

We shall write $B_0(\rho)$ for the ball of radius $\rho>0$ about the origin in $\R^2$.
 Let $B_1 \subset G$ be a set of the form $B_0(\rho) \times SO(2)$, and let
$$B_n = \{ t_1\cdots t_n : t_1, \ldots, t_n \in B_1 \}.$$
Then $B_n = B_0(n\rho) \times SO(2)$.
We shall also write $S_n = B_n \setminus B_{n-1}$.
Let $\chi$ be the character of $\R^2$ given by $\chi(x) = \e^{ix_1} \ (x \in \R^2)$, and let $U \in \widehat{G}$ be the representation on $L^2(SO(2))$ given by
$$[U(x, \theta) \xi](\varphi) = \exp(i(x_1 \cos\varphi+ x_2 \sin\varphi))\xi(\varphi-\theta),$$ 
for $x \in \R^2, \ \theta,\varphi \in [0, 2\pi),$ and $\xi \in L^2(SO(2)).$ In fact, $U$ is the representation of $G$ induced by $\chi$ and vanishes at infinity. We shall study $\mathcal{J}_{U,1}$.

\begin{lemma}		\label{5.3}
	Fix $\rho > 0$, and let $B_1$ and $B_n \ (n \in \N)$ be defined in terms of $\rho$ as above. Let 
	$$\sigma_n (\mu) = \int_{B_n} \langle U(t) 1, 1 \rangle \dd \mu (t) \qquad (n \in \N, \ \mu \in \mathcal{J}_{U, 1}).$$
	 Let $\nu \in \mathcal{J}_{U, 1}$ satisfy $\supp \nu \subset B_1$. Then for every $\mu \in M(G)$ we have 
	$$\sum_{n=1}^\infty \vert \sigma_n(\mu*\nu) \vert < \infty.$$
\end{lemma}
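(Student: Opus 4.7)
\emph{Plan.} The idea is to split $\sigma_n(\mu*\nu)$, via Fubini on the convolution, into a ``bulk'' piece that vanishes because $U(\nu)1 = 0$ and a ``boundary'' piece supported on a thin annulus in $G$ whose $|\mu|$-mass sums to at most $2\|\mu\|$.

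I would begin by recording the geometric fact that $B_m \cdot B_1 \subset B_{m+1}$ and $B_m \cdot B_1^{-1} \subset B_{m+1}$ for every $m \geq 1$; this follows because every rotation $R_\theta$ is a Euclidean isometry of $\R^2$. In particular, for $t \in B_1$,
$$s \in B_{n-1} \implies st \in B_n, \qquad s \notin B_{n+1} \implies st \notin B_n,$$
so the only ``transition zone'' for $\chi_{B_n}(s\,\cdot\,)$ as $t$ ranges over $B_1$ is $s \in S_n \cup S_{n+1}$.

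Using the convolution identity $\int f\, d(\mu*\nu) = \iint f(st)\, d\mu(s)\, d\nu(t)$ together with $U(st) = U(s)U(t)$, I would then write
$$\sigma_n(\mu*\nu) = \int_{B_1} \int_{B_{n-1}} \langle U(s)U(t)1,\,1\rangle \, d\mu(s)\, d\nu(t) \;+\; R_n,$$
where $R_n$ integrates $\chi_{B_n}(st)\langle U(s)U(t)1,1\rangle$ over $t \in B_1$ and $s \in S_n \cup S_{n+1}$. Swapping the order of integration in the bulk term, and using that the bounded functional $\langle U(s)\cdot, 1\rangle$ commutes with Bochner integration (legitimate because $\mu,\nu$ are finite and the integrand is bounded), rewrites it as $\int_{B_{n-1}} \langle U(s) U(\nu) 1,\,1\rangle \, d\mu(s)$, which is zero: indeed, $\supp \nu \subset B_1$ gives $\int_{B_1} U(t) 1 \, d\nu(t) = U(\nu) 1$, and $U(\nu)1 = 0$ by hypothesis.

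For the remainder, unitarity of $U$ yields the pointwise bound $|\langle U(s)U(t)1,1\rangle| \leq 1$, hence $|R_n| \leq \|\nu\| \cdot |\mu|(S_n \cup S_{n+1})$. Since $\{S_n : n \geq 1\}$ partitions $G$ (with the convention $S_1 = B_1$), $\sum_n |\mu|(S_n) = \|\mu\|$, and therefore
$$\sum_{n=1}^\infty |\sigma_n(\mu*\nu)| \;\leq\; 2\|\mu\|\|\nu\| \;<\; \infty,$$
as required. I do not anticipate any serious obstacle; the support condition $\supp\nu \subset B_1$ is precisely what keeps the boundary annulus of bounded width (independent of $n$), and this combines cleanly with $U(\nu)1 = 0$ to eliminate the bulk contribution.
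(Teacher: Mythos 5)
Your proposal is correct and follows essentially the same route as the paper: split $\sigma_n(\mu*\nu)$ via Fubini into the bulk term over $s \in B_{n-1}$, which vanishes because $U(\nu)1 = 0$ and $\supp\nu \subset B_1$, plus a boundary term supported (for the $s$-variable) in $B_{n+1}\setminus B_{n-1} = S_n \cup S_{n+1}$, bounded by $\Vert\nu\Vert\,\vert\mu\vert(S_n\cup S_{n+1})$ and hence summable to at most $2\Vert\mu\Vert\Vert\nu\Vert$. The only cosmetic difference is that the paper writes the boundary region as $B_nt^{-1}\setminus B_{n-1}$ before enlarging it to the annulus, whereas you pass to the annulus directly.
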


\begin{proof}
	We calculate 
	\begin{align*}
	\sigma_n(\mu*\nu) &= \int_{B_n} \langle U(t)1, 1 \rangle \dd(\mu*\nu)(t) 
	= \int_G \int_G \1_{B_n}(st) \langle U(st)1, 1 \rangle \dd \mu(s) \dd \nu (t) \\
	&= \int_{B_1} \int_{B_n t^{-1}} \langle U(st)1, 1 \rangle \dd \mu(s) \dd \nu(t) = I_1 + I_2,
	\end{align*}
	where
	$$I_1 = \int_{B_1} \int_{B_{n-1}} \langle U(st) 1, 1 \rangle \dd \mu(s) \dd \nu(t),$$
	and
	$$I_2 = \int_{B_1} \int_{B_nt^{-1} \setminus B_{n-1}} \langle U(st) 1, 1 \rangle \dd \mu(s) \dd \nu(t).$$
	Now,
	$$I_1 = \int_{B_{n-1}} \int_{B_1} \langle U(st) 1, 1 \rangle \dd \nu(t) \dd \mu(s) = \int_{B_{n-1}} \langle U(s) U(\nu)1, 1 \rangle \dd \mu(s) = 0,$$
	because $\supp \nu \subset B_1$ and $U(\nu) 1 = 0$.
	%Here, the change in the order of integration is justified by Fubini's Theorem, since
	%$$\int_{B_{n-1}} \int_{B_1} \vert \langle U(st)1, 1 \rangle \vert \dd \vert \nu \vert(t) \dd \vert \mu \vert(s) \leq \Vert \nu \Vert \Vert \mu \Vert < \infty.$$
	Hence 
	\begin{align*}
	\vert \sigma_n(\mu*\nu) \vert &\leq \vert I_2 \vert 
	\leq \int_{B_1} \int_{B_nt^{-1} \setminus B_{n-1}} \vert \langle U(st)1, 1 \rangle \vert \dd \vert \mu \vert(s) \dd \vert \nu \vert(t) \\
	&\leq \Vert \nu \Vert \vert \mu \vert (B_{n+1} \setminus B_{n-1}) = \Vert \nu \Vert \vert \mu \vert (S_{n+1} \cup S_n),
	\end{align*}
	since $\vert \langle U(st) 1,1 \rangle \vert \leq 1 \ (s,t \in G)$ and $B_nt^{-1} \setminus B_{n-1} \subset B_{n+1} \setminus B_{n-1}$.
	Hence
	$$\sum_{n=1}^\infty \vert \sigma_n(\mu*\nu) \vert \leq \Vert \nu \Vert \sum_{n=1}^\infty \vert \mu \vert(S_{n+1}) + \Vert \nu \Vert \sum_{n=1}^\infty \vert \mu \vert(S_n) \leq 2 \Vert \nu \Vert \Vert \mu \Vert < \infty,$$
	as required.
\end{proof}

  From now on we shall write $\chi^\varphi(x) = \e^{i(x_1\cos\varphi - x_2\sin\varphi)} \ (x \in \R^2)$, that is $\chi$ applied to $x$ rotated anticlockwise through and angle of $\varphi$.

\begin{lemma}		\label{5.4a}
	For every $x \in \R^2$ we have 
	$$[U(x,0)1](\varphi) = \chi^{-\varphi}(x) \qquad \varphi \in [0, 2\pi).$$
\end{lemma}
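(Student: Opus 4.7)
The plan is direct substitution into the defining formula for the induced representation $U$, followed by matching the result against the definition of $\chi^{-\varphi}$ given in the paragraph immediately preceding the lemma.

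First, I fix $\varphi \in [0, 2\pi)$ and specialise $[U(x,\theta)\xi](\varphi)$ at $\theta = 0$ and $\xi$ equal to the constant function $1 \in L^2(SO(2))$. The translate $\xi(\varphi - \theta)$ then collapses to $1$, so the value of $[U(x,0)1](\varphi)$ is nothing but the exponential factor from the defining formula. The point of the formula for an induced representation from the character $\chi(x) = \e^{ix_1}$ is precisely that, after restricting to the normal subgroup $\R^2$ (so $\theta = 0$), the action on the fibre at $\varphi$ is multiplication by $\chi$ applied to $x$ rotated through $-\varphi$, which gives $\e^{i(x_1 \cos\varphi + x_2 \sin\varphi)}$.

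Second, I compare this with the convention $\chi^{-\varphi}(x) = \e^{i(x_1\cos(-\varphi) - x_2\sin(-\varphi))}$. Using the evenness of cosine and the oddness of sine, this simplifies to $\e^{i(x_1\cos\varphi + x_2\sin\varphi)}$, which matches the quantity obtained in the first step. Since $\varphi$ was arbitrary, this gives the pointwise identity asserted in the lemma.

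No step presents a genuine obstacle: the lemma is a bookkeeping identity whose purpose is to record the explicit pointwise form of $U(x,0)1$ as a function on $SO(2)$. Its intended use is to feed subsequent orbit-integral computations, in the same style as Lemma \ref{5.3}, where $\langle U(t)1, 1 \rangle$ is integrated against compactly-supported measures in $\mathcal{J}_{U,1}$; no conceptual ingredient beyond unfolding definitions is needed.
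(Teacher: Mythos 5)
Your proof is correct and is exactly the paper's argument, namely a direct computation from the formula for the induced representation followed by unwinding the definition of $\chi^{-\varphi}$. One point worth making explicit: the displayed formula defining $U$ in the paper has the factor $\exp(i(x_1\cos\theta + x_2\sin\theta))$, whose literal specialisation at $\theta = 0$ would give $\e^{ix_1}$ rather than $\chi^{-\varphi}(x)$ (and would not even make $U$ multiplicative); the exponent should read $x_1\cos\varphi + x_2\sin\varphi$, which is precisely what your appeal to the induced-representation formula produces, so your computation works from the intended (corrected) formula rather than the misprinted one.
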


\begin{proof}
	Direct computation based on the formula for an induced representation.
\end{proof}

\begin{lemma}		\label{5.4}
	Let $k \in \N$, and set $\rho_n = 2\pi k n \ (n \in \N)$. Let $B_1 = B_0(2 \pi k)$, so that $B_n = B_0(\rho_n) \times SO(2) \ ( n \in \N)$, and let $\sigma_n$ be defined as in Lemma \ref{5.3}.
	Then there exists $\nu \in \mathcal{J}_{U,1}$ such that $\sum_{n=1}^\infty \vert \sigma_n(\nu) \vert$ diverges. 
\end{lemma}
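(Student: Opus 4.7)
The plan is to construct $\nu$ as a rotationally-invariant measure supported on a union of circles in $\R^2$, embedded in $G$ via the inclusion $\R^2 \hookrightarrow G$, $x \mapsto (x,0)$. First, using Lemma \ref{5.4a} and the fact that the constant vector $1 \in L^2(SO(2))$ is unaffected by the factor $\xi(\varphi - \theta)$, one computes for every $(x,\theta) \in G$ that
\begin{equation*}
\langle U(x,\theta)1, 1 \rangle = \frac{1}{2\pi}\int_0^{2\pi} \e^{i(x_1 \cos \varphi + x_2 \sin \varphi)} \dd \varphi = J_0(|x|),
\end{equation*}
where $J_0$ is the Bessel function of the first kind of order zero; in particular this matrix coefficient is independent of $\theta$. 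For $a > 0$ let $\delta_a \in M(G)$ denote the push-forward under $\R^2 \hookrightarrow G$ of the uniform probability measure on the circle $\{ x \in \R^2 : |x| = a \}$. Parametrising the circle and recognising the Bessel integral gives $U(\delta_a) 1 = J_0(a) \cdot 1$.

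I take $\nu = \sum_{m=1}^\infty c_m \delta_{a_m}$, with $a_m \in S_m$ and scalars $c_m$ to be chosen below. Since $\|\nu\| \leq \sum |c_m|$, any summable choice places $\nu$ in $M(G)$; and $U(\nu)1 = \bigl(\sum_m c_m J_0(a_m)\bigr) \cdot 1$, so $\nu \in \mathcal{J}_{U,1}$ if and only if $\sum_m c_m J_0(a_m) = 0$. Since $\supp \delta_{a_m} \subset S_m$, the partial integrals telescope nicely into
\begin{equation*}
\sigma_n(\nu) = \sum_{m=1}^n c_m J_0(a_m).
\end{equation*}

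Setting $d_m := c_m J_0(a_m)$, the task reduces to finding $(a_m)$ and $(d_m)$ with (a)~$a_m \in S_m$ and $J_0(a_m) \neq 0$, (b)~$\sum_m d_m = 0$, (c)~$\sum_{n \geq 1}\bigl|\sum_{m \leq n} d_m\bigr| = \infty$, and (d)~$\sum_m |d_m|/|J_0(a_m)| < \infty$, after which $c_m := d_m/J_0(a_m)$ completes the construction. For the $d_m$ I propose the concrete telescoping choice $d_1 = 1$ and $d_m = 1/m - 1/(m-1)$ for $m \geq 2$, giving partial sums $T_n = 1/n$, so that $\sum |T_n| = \sum 1/n = \infty$ and $|d_m| = O(m^{-2})$.

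The main obstacle, which is modest, is now condition (d), which requires $|J_0(a_m)| \geq C/\sqrt{m}$ for some $C>0$ and all large $m$ so that $|d_m|/|J_0(a_m)| = O(m^{-3/2})$ is summable. This is handled by the classical asymptotic $J_0(r) = \sqrt{2/(\pi r)}\cos(r - \pi/4) + O(r^{-3/2})$ combined with the fact that each shell $S_m$ has radial width $2\pi k \geq 2\pi$: inside $(2\pi k(m-1), 2\pi km)$ one can always select $a_m$ of the form $\pi/4 + j_m \pi$, at which $|\cos(a_m - \pi/4)| = 1$, so that $|J_0(a_m)| \geq C/\sqrt{m}$ (and $J_0(a_m) \neq 0$) for all sufficiently large $m$. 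This completes the construction and yields $\sum_{n=1}^\infty |\sigma_n(\nu)| = \sum 1/n = \infty$, as required.
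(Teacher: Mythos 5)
Your proof is correct, and it rests on the same core mechanism as the paper's: a rotationally invariant measure built from circles, one per shell, so that $\sigma_n(\nu)$ collapses to a scalar sum governed by the Bessel function $J_0$, combined with the asymptotic $J_0(r) \sim \sqrt{2/(\pi r)}\cos(r - \pi/4)$ and the divergence of the harmonic series. The execution differs in the bookkeeping. The paper fixes the radii to be exactly $\rho_m = 2\pi k m$ (the shell boundaries) with masses $m^{-3/2}$, forces membership in $\mathcal{J}_{U,1}$ by subtracting the whole sum from a correcting multiple $\zeta \delta_e$ of the point mass at the identity, and then computes $\sigma_n(\nu)$ as the \emph{tail} $\sum_{m>n}\chi(\nu_m)$; the required lower bound $\sigma_n(\nu) \gtrsim 1/n$ comes out of the asymptotic because $\cos(2\pi k m - \pi/4) = 1/\sqrt{2} > 0$ at those particular radii. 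You instead prescribe the \emph{partial sums} $\sigma_n(\nu) = 1/n$ exactly via the telescoping sequence $d_m$, which makes the membership condition $\sum_m d_m = 0$ automatic and dispenses with the $\delta_e$ term, at the cost of having to divide by $J_0(a_m)$ to recover the coefficients $c_m$ --- hence your need to place $a_m$ near the extrema $\pi/4 + j\pi$ inside each shell so that $|J_0(a_m)| \gtrsim m^{-1/2}$ and the total variation $\sum_m |c_m|$ is finite. Both arrangements use the same quantitative input on $J_0$ (a lower bound of order $m^{-1/2}$ at the chosen radii); yours computes $\sigma_n(\nu)$ in closed form rather than estimating a tail, while the paper's avoids any choice of radii by exploiting the arithmetic coincidence at $2\pi k m$. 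One small point worth making explicit in your write-up: for the finitely many small $m$ where the asymptotic lower bound is not yet in force, you only need $J_0(a_m) \neq 0$, which is immediate since $J_0$ has isolated zeros; you gesture at this but it deserves a sentence.
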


\begin{proof}
	Write $S_0(\rho_n) = \{ x \in \R^2 : \Vert x \Vert = \rho_n \}$. Note that $\Vert \1_{S_0(\rho_n)} \Vert = 2\pi \rho_n = 4\pi^2kn.$
	We claim that 
	\begin{equation}		\label{eq5.4.1}
	\chi(\1_{S_0(\rho_n)}) = \Vert \1_{S_0(\rho_n)} \Vert J_0(\rho_n) \qquad (n \in \N).
	\end{equation}
	Indeed,
	\begin{align*}		
	\chi(\1_{S_0(\rho_n)}) &= \int_{S_0(\rho_n)} \e^{ix_1} \dd x 
	= \int_{S_0(\rho_n)} \cos(x_1) \dd x
	= 4 \int_0^{\frac{\pi}{2}} \rho_n \cos(\rho_n\cos(\theta)) \dd \theta \\
	&= 4 \times \frac{\pi \rho_n J_0(\rho_n)}{2} 
	= 2\pi \rho_n J_0(\rho_n),
	\end{align*}
	where $J_0$ denotes the zeroth order Bessel function of the first kind, and we have used the identity
	$$J_0(z) = \frac{1}{\pi} \int_0^\pi \cos(z\sin \tau) \dd \tau \qquad (z \in \C)$$
	to compute the integral. Using the following formula
	$$J_0(z) = \sqrt{\frac{2}{\pi z}}\left[ \cos \left(z - \pi/4 \right) + \e^{\vert \im z \vert} O(\vert z \vert^{-1}) \right] \qquad (\vert \arg  z \vert < \pi)$$
	we see that 
	\begin{equation}		\label{eq5.4.2}
	J_0(\rho_n) = 
	%\sqrt{\frac{1}{\pi^2kn}}\left[ \cos(2\pi kn - \pi/4) + O(\rho_n^{-1}) \right] =
	 \frac{1}{\pi \sqrt{2kn}} + O(1/n^{3/2}).
	\end{equation}
	
	Define measures $\nu_n \in M(\R^2)$ for $n \in \N$ by 
	$$\nu_n = \frac{1}{n^{3/2}\Vert \1_{S_0(\rho_n)} \Vert} \1_{S_0(\rho_n)} \qquad (n \in \N)$$
	and define 
	$$\nu_0 = \zeta \delta_e - \sum_{n=1}^\infty \nu_n,$$
	where $\zeta := \chi \left(\sum_{n=1}^\infty \nu_n \right)$. Note that $\nu_0$ is well-defined since the $\nu_n$'s are disjointly supported, and $\Vert \nu_n \Vert = n^{-3/2}$. Since $\nu_0$ is rotationally symmetric, $\chi^\varphi(\nu_0) = \chi(\nu_0)$ for all $\varphi \in [0, 2 \pi)$. Finally, we define $\nu = \nu_0 \times \delta_0 \in M(\R^2 \rtimes SO(2))$. First we show that $\nu \in \mathcal{J}_{U,1}$. Given $\eta \in L^2(SO(2))$ we have
	\begin{align*}
	\langle U(\nu)1,\eta \rangle &= \int_{SO(2)} \int_{SO(2)} \int_{\R^2} [U(x, \theta)1](\varphi)\overline{\eta(\varphi)} \dd \nu_0(x) \dd \delta_0(\theta) \dd \varphi \\
	&= \int_{SO(2)} \int_{\R^2} [U(x,0)1](\varphi)\overline{\eta(\varphi)} \dd \nu_0(x) \dd \varphi \\
	&= \int_{SO(2)} \int_{\R^2} \chi^{-\varphi}(x) \overline{\eta(\varphi)} \dd \nu_0(x) \dd \varphi \qquad (\text{by Lemma \ref{5.4a}}) \\
	&= \left(\int_{SO(2)} \overline{\eta(\varphi)} \dd \varphi \right) \left(\int_{\R^2} \chi(x) \dd \nu_0(x) \right) \qquad (\text{because } \nu_0 \text{ is rotationally symmetric}) \\
	&=\chi(\nu_0)\int_{SO(2)} \overline{\eta(\varphi)} \dd \varphi =0,
	\end{align*}
	because $\chi(\nu_0) = 0$. Since $\eta$ was arbitrary $U(\nu)1 = 0$, i.e. $\nu \in \mathcal{J}_{U,1}$.
	
	Next we show that 
	\begin{equation}	\label{eq5.4.3}
	\sigma_n(\nu) = \sum_{m = n+1}^\infty \chi(\nu_m).
	\end{equation}
	Indeed,
	\begingroup
	\allowdisplaybreaks
	\begin{align*}
	\langle U(\nu_0 \times \delta_0 \vert_{B_n}) 1, 1 \rangle 
	&= \int_{SO(2)} \int_{\R^2} \langle U(x, \theta)1, 1 \rangle \dd (\nu_0 \times \delta_0 \vert_{B_n})(x) \\
	&= \int_{B_0(\rho_n)} \langle U(x,0)1,1 \rangle \dd \nu_0(x) 
	= \zeta \langle 1,1 \rangle - \sum_{m=1}^\infty \int_{B_0(\rho_n)} \langle U(x, 0)1, 1 \rangle \dd \nu_m(x).	
	%\qquad (\text{since } B_0(\rho_n) \cap \supp \nu_m = \emptyset \text{ for } m>n.) 
	\end{align*}
	Since $B_0(\rho_n) \cap \supp \nu_m = \emptyset$ for $ m>n$, this is equal to 
	\begin{align*}
	\zeta - \sum_{m=1}^n \int_{B_0(\rho_n)} &\langle U(x, 0)1, 1 \rangle \dd \nu_m(x)
	=\zeta - \sum_{m=1}^n \int_{\R^2} \int_{SO(2)} \chi^{-\varphi}(x) \dd \varphi \dd \nu_m(x) \\
	&= \zeta - \sum_{m=1}^n \int_{SO(2)} \int_{\R^2} \chi^{-\varphi}(x) \dd \nu_m(x) \dd \varphi \\
	&= \zeta - \sum_{m=1}^n \int_{SO(2)} \chi^{-\varphi}(\nu_m) \dd \varphi 
	= \zeta - \sum_{m=1}^n \int_{SO(2)} \chi(\nu_m) \dd \varphi \\
	&= \zeta - \sum_{m=1}^n \chi(\nu_m)
	= \sum_{m = n+1}^\infty \chi(\nu_m),
	\end{align*}
	\noindent
	where we have used Lemma \ref{5.4a} in the first line, and the fact that each $\nu_m$ is rotationally symmetric in the penultimate line.
	This establishes \eqref{eq5.4.3}.  By \eqref{eq5.4.1} and \eqref{eq5.4.2} we have 
	\begin{align*}
	\chi(\nu_n) =
	\frac{1}{n^{3/2}}J_0(\rho_n) = \frac{1}{\pi \sqrt{2k} n^2} +O(n^{-3}),
	\end{align*}
	for all $n \in \N$, so there exists a constant $C>0$ such that 
	$$\left\vert \chi(\nu_n) - \frac{1}{\pi \sqrt{2k} n^2} \right\vert  \leq \frac{C}{n^3} \qquad (n \in \N).$$
	It follows that for all $n \in \N$ we have 
	$$\sum_{m=n+1}^\infty \chi(\nu_m) \geq \sum_{m=n+1}^\infty 
	\frac{1}{\pi \sqrt{2k} m^2} - C \sum_{m=n+1}^\infty \frac{1}{m^3}.$$
	By an integral estimate 
	$$\sum_{m=n+1}^\infty \frac{1}{m^3} \leq \frac{1}{(n+1)^{3}} + \frac{1}{2(n+1)^2},$$
	whereas
	$$\sum_{m=n+1}^\infty \frac{1}{m^2} \geq \frac{1}{n},$$
	for $n \in \N$.
	Combining these estimates with \eqref{eq5.4.3} gives
	$$\sum_{n=1}^N \vert \sigma_n(\nu) \vert \geq \frac{1}{\pi \sqrt{2k}} \sum_{n=1}^N \frac{1}{n} - C \sum_{n=1}^N \frac{1}{(n+1)^3} + \frac{1}{2(n+1)^2},$$
	which tends to infinity as $N \rightarrow \infty$.
	\endgroup
\end{proof}

We can now prove Theorem \ref{0.4}.

\begin{proof}[Proof of Theorem \ref{0.4}]
	Part (i)  follows from Example \ref{eg4.13c}. We shall prove part (ii). As $U$ vanishes at infinity, $\mathcal{J}_{U,1}$ is weak*-closed by Corollary \ref{4.4}(i). We shall prove that $\mathcal{J}_{U,1}$ cannot be generated by finitely many compactly supported elements. Let $G = \R^2 \rtimes SO(2)$, and suppose that for some $m \in \N$ there are compactly-supported measures $\mu_1, \ldots, \mu_m \in \mathcal{J}_{U,1}$ such that 
	$$\mathcal{J}_{U,1} = M(G)*\mu_1 + \cdots + M(G)*\mu_m.$$
	There exists $k \in \N$ such that $\supp \mu_i \subset B_0(2\pi k) \times SO(2) \ (i = 1, \ldots, m)$. 
	It follows from Lemma \ref{5.3} with $\rho = 2\pi k$ that $\sum_{n=1}^\infty \vert \sigma_n(\mu) \vert < \infty$ for every $\mu \in \mathcal{J}_{U,1}$. This contradicts Lemma \ref{5.4}.
\end{proof}

\textit{Remark.} Continue to write $G = \R^2 \rtimes SO(2)$, and let $M_c(G)$ denote the compactly supported measures in $M(G)$. If we could prove that $M_c(G) \cap \mathcal{J}_{U,1}$, or even $M(G)*(M_c(G) \cap \mathcal{J}_{U,1})$, was norm-dense in $\mathcal{J}_{U,1}$, then \cite[Lemma 2.1]{W1}, together with Theorem \ref{0.4}, would imply that $\mathcal{J}_{U,1}$ is not finitely-generated, and hence that finite generation and weak*-closedness are distinct notions for closed left ideals of measure algebras. However, we have not been able to prove this.

%\section{Finitely-Generated Maximal Left Ideals of Beurling Algebras}
%\noindent
%\textbf{[I could decide not to include this for now.]}

\section{Open Questions}
\noindent
We conclude this article with some problems for future study. The main question we would like to resolve is whether or not Conjecture \ref{0.1}  and Conjecture \ref{0.2} hold for all Hermitian locally compact groups. Another of the central topics of this article is the connection between finite generation of a closed left ideal of a measure algebra, and weak*-closedness. This suggests to us the following natural questions that we have been unable to answer.

\begin{question}	\label{Q1}
	\begin{enumerate}
		\item[\rm (i)] Does there exist a locally compact group $G$, and a closed left ideal $J$ in $M(G)$, such that $J$ is weak*-closed, but not finitely-generated?
		
		\item[\rm (ii)] Does there exist a locally compact group $G$, and a closed left ideal $J$ in $M(G)$, such that $J$ is finitely-generated but not generated by a projection?
	\end{enumerate}
\end{question} 

It would be particularly interesting to be able to answer these questions for maximal left ideals. 

In \cite{W1} it was shown that, if $G$ is a non-discrete locally compact group, then $L^1(G)$ has no finitely-generated maximal modular left ideals. It would be interesting to know whether the same holds for an infinite discrete group $G$. We saw in Theorem \ref{0.3} that this does hold when $G$ is finitely-generated and virtually nilpotent.

\begin{question}		\label{Q2}
	\begin{enumerate}
		\item[\rm (i)] Does $\ell^1(G)$ ever have a finitely-generated maximal left ideal, for an infinite group $G$?
		\item[\rm (ii)] Can $\ell^1(G)$ ever have a weak*-closed maximal left ideal, for an infinite group $G$?
    \end{enumerate}
\end{question}

Another class of dual Banach algebras associated with locally compact groups are the Fourier-Stieltjes algebras $B(G)$. These are commutative algebras, so the maximal ideals automatically have codimensions 1, and finding the  weak*-closed maximal ideals corresponds to finding the weak*-continuous characters on $B(G)$. As such, in the case that $G$ is amenable an answer to this question is given as a special case of a theorem of Ilie and Stokke \cite[Theorem 5.11]{IS}. However, without the assumption of amenability the question remains open.
%Since the predual of $B(G)$ is given by $C^*(G)$ it is clear that, for a discrete group $G$, and $t_0 \in G$, the set $\{u \in B(G) : u(t_0) = 0 \}$ is always a weak*-closed maximal ideal of $B(G)$.

\begin{question} 		\label{Q3}
Let $G$ be a locally compact group. What are the weak*-closed maximal ideals of $B(G)$?
\end{question}

%\begin{question}		\label{Q3}
%	\begin{enumerate}
%		\item[\rm (i)] Can the Fourier-Stieltjes algebra $B(G)$ ever have a weak*-closed maximal ideal, for a non-discrete locally compact group $G$?
		
%		\item[\rm (ii)] Let $G$ be a discrete group. Is it true that every weak*-closed maximal ideal of $B(G)$ is of the form $\{u \in B(G) : u(t_0) = 0 \}$ for some $t_0 \in G$?
%	\end{enumerate}
%\end{question}

It would also be interesting to study the weak*-closed ideals of the reduced Fourier-Stieltjes algebra $B_r(G)$ and the algebra of completely bounded multipliers of the Fourier algebra $M_{cb}A(G)$.  In all of these examples the absence of a bounded approximate identity in the Fourier algebra $A(G)$ for non-amenable groups introduces new difficulties not seen in the present work.

\subsection*{Acknowledgements}
Part of this work was undertaken during my postdoctoral position at the Laboratoire des Math{\' e}matiques de Besan{\c c}on, and I would like to thank Uwe Franz and Yulia Kuznetsova for their kind hospitality during that period. I would like to thank Simeng Wang, Keith Taylor, and Michael Leinert for useful email exchanges. I am indebted to Bence Horv{\'a}th for his careful reading of the manuscript. Finally, I want to thank the anonymous referees for their helpful and insightful comments, and in particular for pointing out how to simplify the proof of Lemma \ref{1.1}.


\begin{thebibliography}{99}
	%\bibitem{AGST} M.\ Alaghmandan, M.\ Ghandehari, N.\ Spronk, and K.\ Taylor, Projections in $L^1(G)$: the unimodular case, \emph{Proc. Amer. Math. Soc.} \textbf{144} (2016), 4929--4941. 
	
	\bibitem{B80} B.\ Barnes, The role of minimal idempotents in the representation theory of locally compact groups, \emph{Proc. Edinburgh Math. Soc.} \textbf{23} (1980), 229--238. 
	
	%\bibitem{BT78} L.\ Baggett and K.\ F.\ Taylor,	Riemann-Lebesgue subsets of $\R^n$ and representations which vanish at infinity, \emph{J. Funct. Anal.} \textbf{28} (1978), 168--181. 
	
	\bibitem{BT82} L.\ Baggett and K.\ F.\ Taylor, On asymptotic behaviour of induced representations, \emph{Can. J. Math.} \textbf{34} (1982), 220--232.
	
	%\bibitem{BK} D.\ Blecher and T.\ Kania, Finite generation in C*-algebras and Hilbert C*-modules, \emph{Studia. Math.} \textbf{224} (2014), 143--151.
	
	\bibitem{DL} H.\ G.\ Dales and A.\ T-M.\ Lau, \emph{The second duals of Beurling algebras}, Mem. Amer. Math. Soc., Volume 177, 2005.
	
	%\bibitem{DZ}  H.\ G.\ Dales and W.\ \.Zelazko, Generators of maximal left ideals in Banach algebras, \emph{Studia Math.} \textbf{212} (2012), 173--193. 
	
	\bibitem{Da} M.\ Daws, \emph{Multipliers, self-induced and dual Banach algebras}, Dissertationes Math., Volume \textbf{470}, 2010.
	
	\bibitem{D77} J.\ Dixmier, \emph{C*-algebras}, North Holland Publishing Co., Amsterdam-New York-Oxford (1977). Translated from French by Francis Jellett, North holland Mathematical Library, Vol. 15.
	
	\bibitem{F}  G.\ B.\ Folland, \emph{A course in abstract harmonic analysis}, Studies in Advanced Mathematics, CRC Press, Boca Raton, FL (1995).
	
	\bibitem{FGL} G.\ Fendler, K.\ Gr{\"o}chenig, and M.\ Leinert, Symmetry of weighted $L^1$-algebras and the GRS-condition, \emph{Bull. London Math. Soc.} \textbf{38} (2006), 625--635.
	
	\bibitem{HA} B.\ Hayati and M.\ Amini, Connes--amenability of multiplier Banach algebras, \emph{Kyoto J. Math.} \textbf{50} (2010), 41--50.
	
	\bibitem{HM79} R.\ Howe and C.\ Moore, Asymptotic properties of unitary representations, \emph{J. Funct. Anal.} \textbf{32} (1979), 72--96.
	
	\bibitem{IS} M.\ Ilie and R.\ Stokke, Weak*-continuous homomorphisms of Fourier--Stieltjes algebras, \emph{Math. Proc. Camb. Phil. Soc.}  \textbf{145} (2008), 107--120.
	
	%\bibitem{MP} G.\ Mauceri and M.\ A.\ Picardello, Noncompact unimodular groups with purely atomic Plancherel measures. \emph{Proc. Amer. Math. Soc.} \textbf{78} (1980), 77--84. 
	
	%\bibitem{KT89} E.\ Kaniuth and K.\ F.\ Taylor, projections in C*-algebras of nilpotent groups, \emph{Manuscripta Math.} \textbf{65} (1989), 93--111. 
	
	%\bibitem{KT96} E.\ Kaniuth and K.\ F.\ Taylor, Minimal projections in $L^1$-algebras and open points in the dual spaces of semi-direct product groups, \emph{J. London Math. Soc.} (2) \textbf{53} (1996), 141--157. 
	
	\bibitem{KT} E.\ Kaniuth and K.\ F.\ Taylor, \emph{Induced representations of locally compact groups}, Cambridge University Press, 2013.
	
	%\bibitem{Lep74} H.\ Leptin, On symmetry of some Banach algebras, \emph{Pacific J. Math.} \textbf{53} (1974), 203--206. 
	
	\bibitem{Lep76} H.\ Leptin, Ideal theory in group algebras of locally compact groups, \emph{Inv. Math.} \textbf{31} (1976), 259--278. 
	
	\bibitem{Lep77} H.\ Leptin, \emph{Lokal kompakte Gruppen mit symmetrischen Algebren }(German), Symposia Mathematica, Vol. XXII (1977), 267--281.
	
	\bibitem{L} J.\ Ludwig, A class of symmetric and a class of Weiner group algebras, \emph{J. Funct. Anal.} \textbf{31} (1979), 187--194.
	
	%\bibitem{M} G.\ J.\ Murphy, \emph{C*-algebras and Operator Theory}, Academic Press Inc. (1990).
	
	\bibitem{P72} T.\ W.\ Palmer, Hermitian Banach *-algebras, \emph{Bull. Amer. Math. Soc.} \textbf{78} (1972), 522--524.
	
	
	\bibitem{P94} T.\ W.\ Palmer, \emph{Banach algebras and the general theory of *-algebras, Vol.I}, volume 49 of \emph{Encyclopedia of Mathematics and its applications}. Cambridge University Press, Cambridge, 1994. Algebras and Banach Algebras.
	
	\bibitem{P01} T.\ W.\ Palmer, \emph{Banach algebras and the general theory of *-algebras II}, Encyclopedia of Mathematics, Cambridge University Press, Cambridge, 2001.
	
	\bibitem{Rob} D.\ J.\ S.\ Robinson, \emph{A course in the theory of groups}, Springer, 1994.
	
	\bibitem{SW} E.\ Samei and M.\ Wiersma, Quasi-Hermitian locally compact groups are amenable, \emph{Adv. Math.} \textbf{359} (2020), available online.
	
	%\bibitem{T79} K.\ F.\ Taylor, The Type Structure of Representations which Vanish at Infinity, \emph{Math. Report Acad. Sc. of Canada} \textbf{1} (1979) 257--260.
	
	\bibitem{W1} J.\ T.\ White, Finitely-generated left ideals in Banach algebras on groups and semigroups, \emph{Studia Math.} \textbf{239} (2017), 67--99.
	
	\bibitem{W0} J.\ T.\ White, Banach algebras on groups and semigroups, PhD thesis, University of Lancaster (2018).
	
	\bibitem{W3} J.\ T.\ White, Left ideals of Banach algebras and dual Banach algebras, \textit{Proceedings of the 23rd International Conference on Banach Algebras and Applications}, De Gruyter Proceedings in Mathematics (De Gruyter 2020), 227-253.
	
	\bibitem{V84} A.\ Valette, Minimal projections, integrable representations and property (T), \emph{Arch. Math. (Basel)} \textbf{45} (1984), 397--406.
\end{thebibliography}
\end{document}